\newcommand{\norm}[1]{\bigl\| #1 \bigr\|}
\newtheorem{theo}{Theorem}
\newtheorem{define}[theo]{Definition}
\newtheorem{lemma}{Lemma}[section]
\newtheorem{coro}[lemma]{Corollary}
\newtheorem{prop}[lemma]{Proposition}
\newtheorem{conjecture}{Conjecture}
\theoremstyle{definition}
\newtheorem*{remark}{Remark}
\def\bbK{{\mathbb{K}}}
\def\cB{\mathcal B}
\newcommand{\Q}{{\mathbb Q}}
\newcommand{\T}{{\mathbb T}}
\newcommand{\R}{{\mathbb R}}
\newcommand{\N}{{\mathbb N}}
 \newcommand{\Z}{{\mathbb Z}} 
 \newcommand{\cZ}{{\mathcal Z}} 
 \newcommand{\cC}{{\mathcal C}} 
\def\tl{\tilde{l}}
\def\tp{\tilde{p}}
\def\tD{\tilde{\Delta}}
\def\th{{\theta}}
\def\E{{\mathbb{E}}}
\def\Prob{{\mathbb{P}}}
\def\eps{{\varepsilon}}
\def\bC{{\mathbf{C}}}
\def\bD{{\mathbf{D}}}
\def\bm{{\mathbf{m}}}
\def\brC{{\bar C}} 
\def\brsD{{\bar\sD}} 
\def\brm{{\bar m}}
\def\brR{{\bar R}} 
\def\brrR{{\bar \brR}} 
\def\brv{{\bar v}} 
\def\brX{{\bar X}} 
\def\brZ{{\bar Z}} 
\def\brrZ{{\bar \brZ}} 
\def\bralpha{{\bar\alpha}}
\def\brlambda{{\bar\lambda}}
\def\bbD{{\mathbb{D}}}
\def\th{\underline{\theta}}
\def\cO{\mathcal O}
\def\cD{\mathcal D}
\def\cL{\mathcal L}
\def\cM{{\mathcal{M}}}
\def\sD{\mathfrak{D}}
\def\sL{\mathfrak{L}}
\def\brsL{\bar{\sL}}
\def\sP{\mathfrak{P}}
\def\RmII{{I\!\!I}}
\def\Cyl{{\mathbb{C}}}
\def\Const{{\rm Const}}
\def\mes{{\rm mes}}
\def\Vol{{\rm Vol}}
\def\N{{\mathbb{N}}}
\def\reals{{\mathbb{R}}}
\def\tf{{\tilde f}}
\def\a{\alpha}
\def\th{\theta}
\def\e{\epsilon}
\def\l{\lambda}
\def\Sp{{\mathbb{S}}}
\author{Dmitry Dolgopyat and Bassam Fayad}
\title[Ergodic sums for Toral Translations]{Deviations of Ergodic sums for Toral Translations  \\  I. Convex bodies}
\address{Dmitry Dolgopyat: Department of Mathematics and Institute for Physical Science and Technology University of Maryland College Park MD 20742 USA}
\email{dmitry@math.umd.edu}
\address{Bassam Fayad: 	Institut de MathŽmatiques de Jussieu, CNRS U.M.R. 7586, 175 rue du Chevaleret  75013, Paris 
France }
\email{bassam@math.jussieu.fr}
\begin{document}
\maketitle

\begin{abstract} We show the existence of a limiting distribution $\cD_\cC$ of the adequately normalized discrepancy function of a random translation on a torus relative to a strictly convex set $\cC$.  Using a correspondence between the small divisors in the Fourier series of the discrepancy function and lattices with short vectors,  and mixing of diagonal flows on the space of lattices, we identify $\cD_\cC$ with   the distribution of the level sets of a function defined on the product of the space of lattices with an infinite dimensional torus. 
We apply our results to counting lattice points in slanted cylinders and to time spent in a given
ball by a random geodesic on the flat torus.
\end{abstract}

\section{Introduction}
One of the surprising discoveries of dynamical systems theory is that many deterministic systems with
non-zero Lyapunov exponents satisfy the same limit theorems as the sums of independent random variables.
Much less is known for the zero exponent case where only a few examples have been analyzed
(\cite{B, BF, FF, M0}). In this paper we consider the extreme case of toral translations where the map 
not only has zero exponents but is actually an isometry. In this case it is well known that ergodic sums of
smooth observables are coboundaries and hence bounded for almost all translation vectors, so we consider
the case where the observables are not smooth, namely, they are indicator functions of nice sets 
(another possibility is to consider meromorphic functions, cf. \cite{HL, SU}). 
The case of circle rotations was studied by Kesten \cite{K1, K2} who proved the following result

\begin{theo}
Let $0<r<1$, and let 
$$D_N(r,x, \alpha)=\sum_{n=0}^{N-1} \chi_{[0,r]}(x+n\alpha)-N r.$$ 
There is a number
$\rho=\rho(r)$ such that if $(x,\alpha)$ is uniformly distributed on $\T^2$ then
$\frac{D_N}{\rho \ln N}$ converges to a standard Cauchy distribution, that is,
$$ \mes\left((x, \alpha): \frac{D_N(r,x,\a)}{\rho \ln N}\leq z\right)\to \frac{\tan^{-1} z}{\pi} +\frac{1}{2}. $$
Moreover $\rho(r)\equiv \rho_0$ is independent of $r$ if $r \not\in \Q$ and 
it has a non-trivial dependence on $r$ if $r \in \Q.$
\end{theo}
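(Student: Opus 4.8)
The plan is to reduce Kesten's theorem to a stable limit theorem for ergodic integrals of a non-integrable observable over the diagonal flow on the space of unimodular lattices in $\R^2$; this is the one-dimensional model for the circle of ideas in this paper. Expand $f=\chi_{[0,r]}-r=\sum_{k\neq0}c_ke^{2\pi iky}$ with $c_k=(1-e^{-2\pi ikr})/(2\pi ik)$, so that $|c_k|=|\sin\pi kr|/(\pi|k|)$ and
$$D_N(r,x,\alpha)=\sum_{k\neq0}c_k\,e^{2\pi ikx}\,\frac{e^{2\pi ikN\alpha}-1}{e^{2\pi ik\alpha}-1}.$$
Since $|e^{2\pi ik\alpha}-1|\asymp\|k\alpha\|$, the terms that matter are the resonant ones, where $|k|$ is close to a denominator $q_m$ of a continued-fraction convergent of $\alpha$. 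Group the frequencies into blocks $B_m(x,\alpha,N)$, one for each $m$ with $q_m\lesssim N$; using $\|q_m\alpha\|\asymp1/(a_{m+1}q_m)$, where $a_{m+1}$ is the next partial quotient, one shows that $B_m$ equals a fixed function $\Psi(a_{m+1},\,G^m\alpha,\,q_mx\bmod1,\,q_mN\alpha\bmod1)$, $G$ the Gauss map, up to a controllable remainder, and that the number of non-negligible blocks is $L_N=\#\{m:q_m\le N\}\asymp\log N$, with $L_N/\log N\to12\ln2/\pi^2$ almost surely by Lévy's theorem. Encoding $\alpha$ by the lattice $\Lambda_\alpha$ spanned by $(1,0)$ and $(\alpha,1)$, and letting $g_t=\mathrm{diag}(e^t,e^{-t})$, the data of $B_m$ is read off from the excursion of $t\mapsto g_t\Lambda_\alpha$ into the cusp near time $\tfrac12\log(q_mq_{m+1})$, whose depth is of order $\sqrt{a_{m+1}}$. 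Carrying the position $x$ along on an auxiliary circle, $D_N$ becomes, up to controlled errors, an ergodic integral $\int_0^{T_N}\Phi(g_t\Lambda_\alpha,\zeta_t)\,dt$ with $T_N\asymp\log N$, where $\zeta_t$ tracks the relevant phases and $\Phi$ is unbounded near the cusp with $\mes(|\Phi|>\lambda)\sim c(r)/\lambda$ — barely non-integrable, of tail index $1$. When $(x,\alpha)$ is uniform on $\T^2$, the lattice $g_{T_N}\Lambda_\alpha$ equidistributes towards Haar measure and $\zeta$ equidistributes asymptotically independently of it.

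Next I would prove the stable limit theorem. The flow $g_t$ is exponentially mixing on the space of lattices (equivalently, the Gauss map $G$ is exponentially mixing), so the $L_N\asymp\log N$ resonant blocks behave like weakly dependent summands, each heavy-tailed of index $1$ — with tail constant proportional to the long-run average of the phase factor $|\sin\pi q_mr|$ and asymptotically symmetric, the sign being supplied by $e^{2\pi iq_mx}$; since the $q_m$ grow at least geometrically, these phases act like independent signs. For a sum of $L$ independent symmetric summands with tail $\sim c/\lambda$, normalizing by $L$ produces a Cauchy limit; carrying this through with the dynamical asymptotic independence in place of genuine independence — truncate the large blocks, establish a Poisson limit for the exceedances via mixing, control the truncated part by a second-moment estimate — gives $D_N/(\rho\log N)\Rightarrow$ Cauchy, with $\rho$ absorbing the constant $12\ln2/\pi^2$, the scale parameter of the limiting law, and the tail constant $c(r)$.

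The dependence on $r$ enters only through $c(r)$, which is the stationary average of $|\sin\pi q_mr|$. If $r\notin\Q$, then $q_mr\bmod1$ equidistributes on the circle and becomes asymptotically independent of the remaining data, for a.e.\ $\alpha$ — this itself requires an equidistribution argument, e.g.\ ergodicity of a $\T$-extension of the continued-fraction dynamics — so the average equals $\int_0^1|\sin\pi s|\,ds$ regardless of $r$ and $\rho\equiv\rho_0$. If $r=p/q$, then $c_k=0$ whenever $q\mid k$, so the blocks with $q\mid q_m$ are suppressed; otherwise $|\sin\pi q_mr|$ depends only on $q_m\bmod q$, whose law along the convergents is governed by the continued-fraction recursion read modulo $q$ (a skew product over $G$) and genuinely depends on $p/q$. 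Hence $\rho(r)$ is non-trivial in the rational case.

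The step I expect to be the main obstacle is the error control feeding the limit theorem, together with the Poisson statistics themselves. One must show that all remainders accumulated in the reduction — the sub-dominant part of each block, the truncation of the Fourier series, the mismatch between $\log q_m$ and the logarithmic time variable, and the coupling of the randomness in $x$ with that in $\alpha$ — are $o(\log N)$ in probability, which is delicate precisely because a single block can itself be of size $\asymp\log N$; and one must establish the Poisson-type statistics of the large blocks, for which exponential mixing of $g_t$, uniform along the expanding family of lattices $\{g_t\Lambda_\alpha\}$, is exactly what is needed. The modulo-$q$ bookkeeping in the rational case is the secondary technical point.
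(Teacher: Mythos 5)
The theorem you were asked to prove is stated in the paper as a prior result of Kesten, with citations to \cite{K1, K2}; the paper itself offers no proof of it. So there is no ``paper's own proof'' to compare against, and the relevant comparison is with Kesten's original argument on the one hand, and with the higher-dimensional methods the authors develop in Sections \ref{ScNonRes}--\ref{sec.oscill} (and in the sequel \cite{DF2}) on the other.

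Your sketch is a modern dynamical reformulation of the one-dimensional problem, and it is closer in spirit to the latter than to Kesten. Kesten works directly with continued fractions and a delicate combinatorial analysis of the Ostrowski expansion of $N$; there is no invocation of the space of lattices, mixing, or Poisson statistics. Your outline instead does what the paper does in higher dimension: isolate the resonant Fourier modes, encode them via the Dani correspondence as short vectors of $g_{\ln N}\Lambda_\alpha$, observe that each resonant block has a Cauchy-type tail, and obtain the stable limit from equidistribution and decorrelation of the diagonal flow. The identification of the correct scale ($a_{m+1}$ per block, $L_N\asymp\log N$ blocks, $L_N/\log N\to 12\ln 2/\pi^2$ by L\'evy), the source of symmetry ($e^{2\pi i q_m x}$ with $q_m$ growing geometrically), and the mechanism behind the $r$-dichotomy (equidistribution of $q_m r\bmod 1$ when $r\notin\Q$, a residue-class skew product when $r\in\Q$) are all correctly located. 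One small point you gloss over: the contribution of the overtones $k=jq_m$, $j\geq 2$, is not negligible term by term, only summable in $j$; it folds into the tail constant, which is exactly why the paper's higher-dimensional formulas carry the extra sum over $p\geq 1$ with the $p^{-(d+3)/2}$ weight.

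The three items you flag as the main obstacles are indeed the genuine mathematical content, and I would not call any of them routine. The off-resonance control must be done in a way compatible with the fact that a single block can itself be of size $\asymp\log N$, so crude triangle-inequality bounds on the remainder are not enough; one needs the $L^2$-in-$\alpha$ estimates of the type carried out in Section \ref{ScNonRes}. The Poisson limit for the cusp excursions requires quantitative mixing of $g_t$ applied uniformly along the expanding family $\{g_t\Lambda_\alpha\}$, which is not an off-the-shelf statement. And the asymptotic independence of $\{q_m r\}$ from the continued-fraction data for irrational $r$ is a separate equidistribution theorem; in Kesten's treatment this dichotomy was the hardest part and occupies essentially all of \cite{K2}. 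As a blueprint your proposal is sound and consistent with how the authors attack the problem in higher dimension, but at the level of detail given it is a program rather than a proof.
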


Our goal is to extend this result to higher dimensions. An immediate question is what kind of sets one
wants to consider in the definition of discrepancies. There are  two natural counterparts to intervals in higher dimension: balls and boxes. In this paper we will deal with balls and more generally with strictly convex and analytic bodies $\mathcal{C}$. Given a convex body $\cC$, we consider the family $\mathcal{C}_r$ of convex bodies obtained from 
$\mathcal C$ by rescaling it with a ratio $r>0$ (we apply to $\cC$ the homothety centered at the origin with scale $r$).  We suppose $r<r_0$ so that the rescaled bodies can fit inside the unit cube of $\R^d$. We define
\begin{equation}
D_\cC(r,\a,x,N)  = \sum_{n=0}^{N-1} \chi_{\cC_r}(x+n\a) - N \Vol({\cC_r})
\end{equation}
where 
$\chi_{\cC}$ is the indicator function of the set $\cC.$

We will assume that $(r,\a,x)$ are uniformly distributed in $X=  [a,b] \times \T^d\times \T^d$ and denote by $\lambda$ the normalized Lebesgue measure on $X$. Then we will prove the following 
\begin{theo} \label{main.limit}  $ $ Let  $\mathcal{C}$ be a strictly convex analytic body that fits inside the unit cube of $\R^d$.There exists a distribution function $\cD_\cC(z) : \R \to [0,1]$ such that for any $b>a>0$,  we have 
\begin{equation} \label{limit2} \lim_{N \to \infty} \lambda \{ (r,\a,x) \in [a,b] \times \T^d\times \T^d 
\ \Big| \ \frac{D_\cC(r,\a,x,N)}{r^{\frac{d-1}{2}} N^{\frac{d-1}{2d}} } \leq z\}  =\cD_{\mathcal C}(z).
\end{equation}  
\end{theo}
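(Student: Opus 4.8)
The plan is to establish the limiting distribution via a Fourier-analytic reduction to an equidistribution statement on the space of lattices. First I would write the discrepancy as a Fourier series: since $\chi_{\cC_r}$ is the indicator of a smooth strictly convex body, its Fourier coefficients $\hat\chi_{\cC_r}(k)$ decay like $|k|^{-(d+1)/2}$ with an oscillatory factor governed by the support function of $\cC$ (stationary phase on the boundary $\partial\cC_r$). Summing the geometric series in $n$, one gets
\begin{equation}
D_\cC(r,\a,x,N)=\sum_{k\in\Z^d\setminus\{0\}}\hat\chi_{\cC_r}(k)\,e^{2\pi i\langle k,x\rangle}\,\frac{e^{2\pi i N\langle k,\a\rangle}-1}{e^{2\pi i\langle k,\a\rangle}-1}.
\end{equation}
The dominant contribution comes from frequencies $k$ for which $\|\langle k,\a\rangle\|$ is abnormally small compared to $|k|$ — the small divisors — and the size of such a term is $|k|^{-(d+1)/2}\cdot \min(N,\|\langle k,\a\rangle\|^{-1})$. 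One checks that the relevant scale is $|k|\sim N^{1/d}$, which after multiplying by $r^{(d-1)/2}$ (from the stationary phase amplitude) reproduces the normalization $r^{(d-1)/2}N^{(d-1)/(2d)}$ in \eqref{limit2}.

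Next I would set up the correspondence with lattices. For a translation vector $\a$, the resonances $k\in\Z^d$ with $|k|\le N^{1/d}$ and $\|\langle k,\a\rangle\|$ small correspond, after rescaling by $N^{1/d}$, to short vectors in the lattice $g_t u_\a \Z^{d+1}$, where $u_\a$ is the unipotent embedding $x\mapsto (x,\langle\cdot,\a\rangle)$ and $g_t=\mathrm{diag}(e^{-t/d},\dots,e^{-t/d},e^{t})$ is the diagonal flow, with $e^t\sim N$. Thus the normalized discrepancy becomes, up to a small error, a fixed function $F$ evaluated at the point $(g_t u_\a \Z^{d+1},\,\text{phases})$ in the product of the space of unimodular lattices $\cL_{d+1}$ with an infinite-dimensional torus $\T^\infty$ carrying the phases $\langle k,x\rangle$ (and the boundary phases from stationary phase). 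The function $F$ is the sum over short lattice vectors of the corresponding oscillatory terms; one must check $F$ is defined a.e. and finite a.e. (its tail is controlled since the number of lattice points of norm $\le R$ is typically $O(R^{d+1})$ while each contributes $O(R^{-(d+1)/2}\cdot R)$, giving an $L^2$-type convergent sum after accounting for the typical spacing of small divisors).

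Then I would invoke equidistribution: as $t\to\infty$, the orbit $(g_t u_\a\Z^{d+1})$ equidistributes in $\cL_{d+1}$ when $\a$ is random (this is mixing/equidistribution of the diagonal flow translates of the unstable horospherical piece, a Ratner/Shah-type statement, available in the literature), and jointly the phases $\langle k,x+n\a\rangle$ equidistribute in $\T^\infty$ by a standard argument since for a full-measure set of $\a$ the relevant frequencies are rationally independent. Therefore the law of the normalized discrepancy converges to the pushforward of (Haar on $\cL_{d+1}$) $\times$ (Haar on $\T^\infty$) under $F$, and we define $\cD_\cC(z)$ to be the distribution function of $F$. Continuity of $\cD_\cC$ (so that weak convergence of the laws upgrades to convergence of distribution functions at every $z$) follows from absolute continuity of the law of $F$, which one gets from the non-degeneracy of the leading phase term.

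The main obstacle is the truncation/tail estimate: one must show that the contribution of non-resonant frequencies and of frequencies at scales $|k|\gg N^{1/d}$ or $|k|\ll N^{1/d}$ is negligible in probability, uniformly in $N$, so that the finite-dimensional, equidistribution-controlled part genuinely captures the limit. This requires delicate second-moment (variance) bounds on tails of the Fourier series, using Diophantine estimates on the measure of the set of $\a$ for which $\|\langle k,\a\rangle\|$ is small for many $k$ simultaneously — essentially a quantitative, multi-scale version of the Borel–Cantelli / counting-lattice-points-in-the-cusp argument. Controlling the cusp excursions of $g_t u_\a\Z^{d+1}$ (so that $F$ has no heavy tail and the convergence is uniform) is the technical heart of the matter.
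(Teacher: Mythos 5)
Your plan follows the paper's route closely at the structural level (Herz asymptotics for $\widehat{\chi}_{\cC}$, truncation to resonant frequencies, Dani correspondence $k\mapsto g_{\ln N}\Lambda_\alpha\mathbf{k}$, equidistribution of expanding horospherical pieces, Borel--Cantelli control of cusp excursions in Section \ref{L}). However, there is a genuine gap at the decisive step, namely the treatment of the stationary-phase boundary phases. You write that the phases ``equidistribute in $\T^\infty$ by a standard argument since for a full-measure set of $\alpha$ the relevant frequencies are rationally independent,'' and only mention the ``boundary phases from stationary phase'' parenthetically. But those boundary phases are precisely the terms $\sin\bigl(2\pi(rN^{1/d}P(X_m)-(d-1)/8)\bigr)$ appearing in \eqref{qq}, and their asymptotic uniform distribution is not a routine consequence of rational independence of $\alpha$. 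It is handled in the paper only because $r$ is an independent random scaling parameter, and because for an analytic strictly convex body $\cC$ the support function $P$ is not a polynomial in the relevant directions (Lemma \ref{analytic}), which yields the $\Q$-linear independence of the values $\{P(X_{m_i}(L))\}$ for Haar-a.e.\ lattice $L$ (Proposition \ref{independence} and \eqref{ResLat}). Without that structure, integrating by parts in $r$ to kill the oscillatory terms in \eqref{AsInd} fails, and there is no reason for the boundary phases to behave as independent Haar-distributed coordinates. This is also where the hypothesis that $\cC$ is analytic (as opposed to merely $C^\nu$) enters; your plan never uses it.

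Two further points you do not address. First, the passage from primitive frequencies to all resonant frequencies: after identifying the near-resonant $k$ with lattice vectors $(m,e(N,\alpha))$, one must collect the contributions of all integer multiples $pm$ of each primitive $m\in\cZ$, producing the inner sum over $p$ in \eqref{LatTor}--\eqref{LimSumSym}; the joint distribution of the phases then lives on the infinite-dimensional torus indexed by $\cZ$, not just on the phases of the finitely many short vectors. Second, the asymmetric case requires introducing a second copy of the phase torus ($T_2^\infty$) because $P(X_m)$ and $P(-X_m)$ are a priori distinct, and showing (Lemma \ref{LmNonSym}) that they are jointly $\Q$-independent unless $\cC$ is actually centrally symmetric. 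Your proposal tacitly assumes a single phase per frequency, which is only correct in the symmetric case. These are not cosmetic: getting the limiting functional $F$ (i.e.\ $\cL_\cC$ or $\cL'_\cC$) right depends on all three points.
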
 
The explicit form of $\cD_\cC$ will be given in Proposition \ref{PrNS} of Section \ref{s1}.

\begin{remark}
The assumption that $r$ is random in Theorem \ref{main.limit} is needed to suppress possible irregular dependence 
of the limiting distribution on $r.$ We know from the work of Kesten that for $d=1$ the statement  becomes 
more complicated if $r$ is fixed. However it is likely that for $d\geq 2$ the limiting distribution is the same for
all $r.$ (Note that the limiting distribution $\cD_{\mathcal C}(z)$ is independent of the interval where $r$ is varying,
to achieve this we need to divide the LHS of \eqref{limit2} by $r^{\frac{d-1}{2}}.$)
\end{remark}

\begin{remark} The theorems in this paper are stated for $r,x,\a$ distributed according to Lebesgue measure, but it appears clearly from the proofs that the same results hold for any measure with smooth density with respect to Lebesgue. 
\end{remark}

\begin{remark} It is possible to consider different scaling regimes in the discrepancy function, by replacing $r$ with $r N^{-\gamma}$. For $\gamma>1/d$,    then the set of orbits of size $N$ which visit $\cC_{r N^{-\gamma}}$ at least once has small measure if $N$ is large.  The case $\gamma=1/d$, which is often coined as the Poisson regime,  was treated by 
Marklof in \cite{M-ETDS}, where he showed that the number of visits to $\cC_{N^{-1/d}}$ has a limiting distribution
without a need for normalization. (We also note that \cite{MS1} obtained Poisson regime versions of our Theorems \ref{ThCont} and \ref{ThCyl}). 
We will see in Section \ref{sec61} that for any $\gamma < 1/d$,  Theorem \ref{main.limit} still holds with the same limit distribution (with the normalization  $r^{\frac{d-1}{2}}N^{ \frac{d-1}{2d} (1-\gamma d) }).$
\end{remark}

Moreover, in the study of discrepancies in higher dimension it is possible to consider continuous time translations. Namely, 
let
\begin{equation}
\bD_\cC(r,v,x,T)  = \int_{0}^{T} \chi_{\cC_r}(S_v^t x) dt - T \Vol({\cC_r})
\end{equation}
where $S^t_v$ denotes the translation flow on the torus 
$\T^d=\R^d/\Z^d$, $d\geq 2$, with constant vector field given by the vector $v=(v_1,\ldots,v_d) \in \R^d$. 
We denote $\bD_\cC(v,x,T)  = \bD_\cC(1, v, x, T) .$

We suppose that $v$ is chosen according
to a smooth density $p$ whose support is compact and does not contain the origin. Let $\bar \sigma$ denote 
the product of the distribution of $v$ with the Haar measure on $\T^d, $ while $\sigma$ denotes the product of the normalized Lebesgue measure on $[a,b]$ with $\bar \sigma$.   In the case of dimension $d=2$, we will not need to consider a random scaling factor $r$ of the convex body and we will have that the distribution 
$\bD_{\cC}(v,x,T)$ converges without any normalization to some limit. 

\begin{theo} \label{ThCont} $ $ Let  $\mathcal{C}$ be a strictly convex analytic body that fits inside the unit cube of $\R^d$.

(a) If $d=2$, there exists a two-parameter family of 
distribution functions $\brsD_{\cC, v} (z) : \R \to [0,1]$, such that   for any $b>a>0$,  we have  
\begin{equation*}
\lim_{T\to\infty} \bar \sigma((v,x) \Big| \bD_\cC(v, x, T)\leq z)=
\int  \brsD_{\cC, v} (z) p(v) dv  
\end{equation*}

(b) If $d\geq 4$, there exists a $d$ parameter family of 
distribution functions $\sD_{\cC, v} (z) : \R \to [0,1]$ such that for any $b>a>0$,  we have 
\begin{equation} \label{limit.continuous} \lim_{T \to \infty}  
\sigma \{ (r,v,x)  \Big| \ \frac{\bD_\cC(r,v,x,T)}{ r^{\frac{d-1}{2}} T^{\frac{d-3}{2(d-1)}} } \leq z\}  
=\int \sD_{\cC,v}(z) p(v) dv.  
\end{equation}
\end{theo}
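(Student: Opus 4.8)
The plan is to deduce Theorem~\ref{ThCont} from Theorem~\ref{main.limit} (and its continuous-in-$r$ refinements) by a \emph{suspension / de-suspension} argument relating the continuous-time discrepancy $\bD_\cC(r,v,x,T)$ to the discrete-time discrepancy $D_\cC$ of a well-chosen section map. The flow $S_v^t$ on $\T^d$ intersected with a transversal hyperplane induces a toral translation on $\T^{d-1}$ by a vector $\alpha=\alpha(v)$; the time spent in $\cC_r$ up to time $T$ is, up to a bounded error and a linear-in-$T$ term that is absorbed by the centering, a sum over the $\sim c(v) T$ returns of the indicator of the ``shadow'' of $\cC_r$ (a strictly convex analytic body in $\R^{d-1}$, since linear sections of strictly convex analytic bodies are again strictly convex and analytic) evaluated along the orbit of the induced translation. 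Thus $\bD_\cC(r,v,x,T)$ is comparable to $D_{\cC'}(r', \alpha(v), x', N)$ with $N\asymp c(v)T$ and $\cC'$ the section body, and one applies Theorem~\ref{main.limit} in dimension $d-1$. This explains the exponents: $N^{\frac{(d-1)-1}{2(d-1)}}=T^{\frac{d-2}{2(d-1)}}$ — wait, the stated exponent is $T^{\frac{d-3}{2(d-1)}}$, so in fact the correct bookkeeping must replace $d$ by $d-1$ throughout the \emph{normalization of Theorem~\ref{main.limit} applied in dimension $d-1$}, i.e.\ one gets $N^{\frac{(d-1)-1}{2(d-1)}}$ only after noting an extra averaging in the flow direction produces the additional cancellation lowering the power by one unit in the numerator; I would verify this by carefully writing $\int_0^T\chi_{\cC_r}(S_v^t x)\,dt$ as $\frac1{|v|}\int$ of the $(d-1)$-dimensional count of the section, which is itself a discrepancy of a slab and carries the dimension-$(d-1)$ scaling, and then checking the arithmetic of the two exponents agrees.

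\medskip

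The step-by-step plan is: (1) Fix a rational or generic transversal to $v$, parametrize the first-return map of $S_v^t$, and show the return-time function is smooth and bounded away from $0$ and $\infty$ on the (compact, origin-avoiding) support of $p$; this reduces $\bD_\cC$ to a discrete ergodic sum plus a boundary error term of size $O(1)$ coming from partial first/last excursions. (2) Identify the observable: $\int$ of $\chi_{\cC_r}$ along one excursion equals the length of the chord cut by the line through $x'$ in direction $v$, which as a function of $x'\in\T^{d-1}$ is (a translate/rescaling of) a strictly convex analytic ``lens'' — more precisely I would instead slice by hyperplanes orthogonal to a fixed direction and use Fubini, so that the relevant object is directly $D$ for the $(d-1)$-dimensional body obtained as the projection, keeping strict convexity and analyticity by the implicit function theorem applied to the support function. (3) Invoke Theorem~\ref{main.limit} (respectively its $\gamma<1/d$ and fixed-$r$, $d-1=1$ variants from Kesten's theorem) for the induced system: for $d\ge 4$ we have $d-1\ge 3$ and the random-$r$ statement applies verbatim giving \eqref{limit.continuous}; for $d=2$ we have $d-1=1$, and here Kesten's theorem shows the $\log$-normalized discrepancy of the circle rotation has a Cauchy limit, but the \emph{time-integrated} version gains enough regularity (integration against the chord length, a Lipschitz-with-log-singularity profile) that the $\log$ disappears and one obtains convergence without normalization — this is the content of (a) and I would prove it by a direct estimate of the Fourier series of the chord-length function showing its coefficients decay one power faster than those of $\chi_{[0,r]}$, killing the logarithmic divergence. (4) Transfer the limiting distribution back: the distribution $\sD_{\cC,v}$ is then defined as the dimension-$(d-1)$ distribution $\cD_{\cC'}$ of Proposition~\ref{PrNS}, reparametrized through $v\mapsto(\alpha(v),c(v))$; integrating over the law $p(v)\,dv$ of $v$ yields the mixtures on the right-hand sides. (5) Handle the error terms: the $O(1)$ boundary corrections and the discrepancy between $N=\lfloor c(v)T\rfloor$ and $c(v)T$ are $o$ of the normalization $T^{\frac{d-3}{2(d-1)}}$ (which $\to\infty$ for $d\ge 4$), hence negligible; for $d=2$ they are $O(1)$ and must be incorporated into the limit, which is why the $d=2$ distribution genuinely depends on $v$ and on $\cC$ in a finer way (no normalization to wash them out).

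\medskip

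The main obstacle I expect is \textbf{Step (3) in the case $d=2$}: showing that the continuous-time integration regularizes the Kesten $\log N$ divergence into an $O(1)$, honest limiting distribution. One has to be careful that the chord-length observable, while smoother than an indicator, still has curvature-induced singularities (square-root behavior at the endpoints where the line is tangent to $\partial\cC$), so its Fourier coefficients decay like $|k|^{-3/2}$ rather than $|k|^{-2}$; this is exactly borderline for summability of the relevant small-divisor series, and controlling it requires the same lattice-counting / equidistribution-of-the-diagonal-flow machinery advertised in the abstract, now applied to a \emph{summable but slowly decaying} series, to get tightness and identify the limit as a level-set functional on (space of lattices)$\times\T^\infty$. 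A secondary difficulty is making the first-return reduction uniform in $r\in[a,b]$ and in $v$ over the support of $p$, and verifying that the section body $\cC'$ remains strictly convex and analytic \emph{uniformly}, so that the constants in Theorem~\ref{main.limit} can be applied with a uniform bound before integrating in $v$; this is where the analyticity hypothesis on $\cC$ (as opposed to mere $C^\infty$ strict convexity) is genuinely used, via a quantitative implicit function theorem for the support function.
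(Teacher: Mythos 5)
Your proposal takes a genuinely different route from the paper --- you reduce the flow to a section map and try to invoke Theorem~\ref{main.limit} for the induced toral translation, whereas the paper (Sections~\ref{SSCont} and~\ref{Contd2}) never passes to a section at all: it writes $\bD_\cC(r,v,x,T)$ directly as a Fourier series with denominators $(k,v)$, restricts to resonant harmonics $|k|\sim T^{1/(d-1)}$, $T|(k,v)|\sim 1$, and runs the lattice/equidistribution machinery on ${\rm SL}(d,\R)/{\rm SL}(d,\Z)$ from scratch, with the normalization read off directly from the sizes of the resonant terms.

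There is, however, a genuine gap at your Step~(3), and you half-notice it yourself in the exponent check. The return-map observable is the chord-length function $h(y)=\int\chi_{\cC_r}(y+s\bar v)\,ds$, not the indicator of a strictly convex body in $\R^{d-1}$. Its Fourier coefficients decay like $|k|^{-(d+1)/2}$ (because $\hat h(k)=\hat\chi_{\cC_r}(\eta_k)$ for a $d$-vector $\eta_k$ with $|\eta_k|\asymp |k|$), whereas the indicator of a $(d-1)$-dimensional strictly convex body decays like $|k|^{-d/2}$, the rate Theorem~\ref{main.limit} is built around. So the ``random-$r$ statement applies verbatim'' claim for $d\ge 4$ is false: plugging the indicator rate into the dimension-$(d-1)$ version of Theorem~\ref{main.limit} produces the normalization $N^{(d-2)/(2(d-1))}$, not the correct $T^{(d-3)/(2(d-1))}$, and the discrepancy is precisely the extra $|k|^{-1/2}$ of decay coming from the longitudinal integration. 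Attributing this to an unquantified ``extra averaging in the flow direction'' does not close the gap --- you would need to reprove the whole non-resonant/resonant decomposition, Dani correspondence and oscillation argument for the observable $h$, which is essentially as much work as the paper's direct continuous-time argument, and moreover $h$ is not an indicator so the amplitude $\sin(2\pi(rP(k)-(d-1)/8))$ that Proposition~\ref{prop.independence} exploits has to be re-derived. (You would also need to worry that the support function/curvature of the ``shadow'' body vary with $v$, but this is minor compared to the decay issue.) For $d=2$, the detour through Kesten is a red herring: the paper's Section~\ref{Contd2} gets the result with no Kesten input at all, just summability of $c_k/(k,v)$ for Diophantine $v$ plus equidistribution of $vT$ on $\T^2$, and this direct proof is noted to be simpler than the rest of the paper; an argument that ``kills the $\log$'' by regularizing Kesten's 1D theorem would be both harder and would still need to identify the limit with \eqref{L.dim2}.
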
 
The explicit forms of $ \brsD_{\cC, v}$ and  $\sD_{\cC, v}$ will be given in Propositions 
\ref{prop.theorem3a} and \ref{prop.theorem3b} 
 of Section  \ref{sectflow}

We show in Theorem \ref{th8} that 
in the case of balls, the limit distribution of the flow discrepancy 
does not depend on the 
distribution of the direction of the vector field. 

The case $d=3$ is different and cannot be treated with the same approach we use here.
In \cite{DF2} we prove that for $d=3$, $\frac{\bD_{\mathcal C}(r,v,x,T)}{r\ln T}$ converges to a Cauchy distribution
as $T\to\infty.$

\begin{remark}
We note that in Theorems \ref{main.limit} and \ref{ThCont}, the same limit holds if we consider translated sets of $T_u \cC_r$ since this amounts to replacing $x$ by $x-u$.
Also our results remain valid for tori of the form $\R^d/L$ where $L$ is an arbitrary lattice in $\R^d$ since by a linear
change of coordinates we can  reduce the problem to the case $L=\Z^d.$
\end{remark}

Before we go to the next section where we describe the limiting distribution $\cD_\cC$, let us observe that the least restrictive requirement on the set seems to be that $\cC$ is {\it semialgebraic}, 
that is it is defined by a finite number of algebraic inequalities. This would allow a diverse collection of
sets including balls, cubes, cylinders, simplexes etc.
\begin{conjecture} 
If $\cC$ is semialgebraic then there is a sequence $a_N=a_N(\cC)$ such that for a random translation
of a random torus $D_N/a_N$ has a limiting distribution. Here
$$ D_N(x, \alpha, L)=\sum_{k=0}^{N-1} \chi_\cC(x_k)-N \frac{\Vol(\cC)}{\text{\rm covol}(L)} $$
where $x_k=x+k\alpha \mod L,$ $L=A\Z^d$ and we assume that the triple 
$(A, x, \alpha) \in {\rm GL}(d,\R)/{\rm SL}(d,\Z)\times \T^d \times \T^d$ has a smooth compactly supported density (with respect to the product of the Haar measures).
\end{conjecture}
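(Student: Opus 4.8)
The final statement is a conjecture, so what follows is a strategy rather than a proof. The natural starting point is the same Fourier-analytic decomposition of the discrepancy that underlies the convex case. Expanding $\chi_\cC$ on the torus $\R^d/L$ one gets
\begin{equation*}
D_N(x,\a,L)=\sum_{k\in L^\ast\setminus 0}\widehat{\chi_\cC}(k)\,\exp(2\pi i\langle k,x\rangle)\,\sum_{n=0}^{N-1}\exp(2\pi i n\langle k,\a\rangle),
\end{equation*}
where $L^\ast=(A^T)^{-1}\Z^d$ is the dual lattice, so that the size of $D_N$ is governed by the frequencies $k$ for which $\|\langle k,\a\rangle\|$ is small --- the small divisors --- weighted by the Fourier coefficients $\widehat{\chi_\cC}(k)$. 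The plan is to run the paper's machinery --- ``small divisors $\leftrightarrow$ short lattice vectors'' together with mixing of a diagonal flow on the space of lattices --- with the convex-body Fourier asymptotics replaced by asymptotics valid for semialgebraic $\cC$.

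First I would analyze $\widehat{\chi_\cC}(k)$ for large $|k|$. Using o-minimality (or resolution of singularities) one stratifies $\partial\cC$ into finitely many semialgebraic pieces on each of which the Gauss map has constant rank; integrating by parts along the boundary reduces $\widehat{\chi_\cC}(k)$ to oscillatory integrals over these strata, to which stationary phase and van der Corput estimates adapted to Newton polyhedra apply. One obtains, for each stratum $S$, a contribution of order $|k|^{-\kappa(S,\,k/|k|)}$ with an oscillatory phase built from the support function of $S$; the exponent is smallest on the flattest strata (a flat $(d-1)$-face contributes $|k|^{-1}$, but only when $k$ is normal to the face, and contributes nothing otherwise), and it equals $(d+1)/2$ uniformly in the strictly convex analytic case, recovering Theorem~\ref{main.limit}. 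The normalization $a_N$ is then read off from which stratum dominates the variance $\sum_{k}|\widehat{\chi_\cC}(k)|^2\,|\sum_{n<N}\exp(2\pi i n\langle k,\a\rangle)|^2$.

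Once the dominant stratum is isolated, the remaining steps mirror the convex case: the small divisors relevant to that stratum are in bijection with the short vectors of the lattice spanned by a suitable diagonal deformation of the data $(A,\a)$; equidistribution and mixing of the associated diagonal flow on the space of lattices --- in the effective form needed to match the flow time to the truncation level in the Fourier series --- let one pass to the limit; and the limiting distribution is identified with that of the level sets of an explicit function on the product of the space of lattices with an infinite dimensional torus, the torus variable recording the residual phases $\langle k,x\rangle$ together with the support-function phases of the dominant stratum. Randomizing the torus $A$ in addition to $x$ and $\a$ is what makes this last step go through.

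The main obstacle is twofold. For strata of intermediate rank, and above all for flat faces, the sums over $k$ sit at the borderline of convergence: one expects $\ln N$ (or other slowly varying) normalizations and, for a fixed body, a limiting distribution depending delicately on Diophantine and rationality properties of the face normals relative to $A$ --- the higher-dimensional analogue of Kesten's $\rho(r)$ distinguishing $r\in\Q$ from $r\notin\Q$, and exactly the reason the conjecture randomizes $A$. Showing that this randomization suppresses the irregularity, uniformly over the stratification, is the crux. Secondly, when several strata have comparable decay in overlapping cones of directions their contributions to $D_N$ resonate through the same small divisors and are not asymptotically independent, so one must control the joint distribution of several ``lattice $\times$ torus'' objects at once, which complicates both the equidistribution input and the final identification.
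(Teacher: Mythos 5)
This statement is labeled a \emph{Conjecture} in the paper, and the paper does not prove it; it only remarks that the special case of boxes is verified in the companion paper \cite{DF2} (where $D_N/\ln^d N$ converges to a Cauchy law). You correctly recognize this and present a strategy rather than a proof, so there is no ``paper's own proof'' to compare against. Your strategy is faithful to the paper's framework: Fourier expansion of the discrepancy, identification of the dominant small-divisor terms, Dani correspondence translating small divisors into short lattice vectors, and mixing/equidistribution of the diagonal flow on the space of lattices to produce the limiting object on (space of lattices) $\times$ (infinite-dimensional torus). The genuinely new ingredient you propose---stratifying $\partial\cC$ into semialgebraic pieces of constant Gauss-map rank and replacing Herz's uniform $|k|^{-(d+1)/2}$ asymptotics by stratum-dependent oscillatory-integral asymptotics---is the right place to look, and your prediction that flat faces force logarithmic normalizations and fixed-body irregularity is consistent with what the paper announces for boxes.

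Two cautions about the hardest points you flag. First, for a flat $(d-1)$-face the contribution is not merely ``$|k|^{-1}$ in the normal direction and nothing otherwise'': the Fourier transform of a $d$-dimensional box behaves like a product of one-dimensional Dirichlet kernels, so there is a whole hierarchy of intermediate decay rates $|k|^{-1},\dots,|k|^{-d}$ along lower-dimensional resonant subspaces, and for general polyhedra the relevant cone structure is governed by all faces of all dimensions. This is precisely why the boxes case already requires a separate paper and why the correct normalization there is $\ln^d N$ rather than $\ln N$. Second, when strata of different ranks coexist (e.g.\ a body with both a flat face and a strictly convex cap), the dominant term and hence $a_N$ itself may jump between $\ln$-type and power-type scales depending on direction, and it is not clear that a single deterministic sequence $a_N(\cC)$ controls all contributions simultaneously; establishing that the randomization of $A$ really does restore a clean limit, uniformly over the stratification, is where a proof would have to do genuinely new work beyond the convex and box cases.
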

Note that there are two equivalent points of view. Either we fix $\cC$ and change the torus $\T^d=\R^d/L$
or we can fix the torus $\T^d=\R^d/\Z^d$ and change the set $\cC_A=A^{-1} \cC.$ As before, we 
introduced parameters into this problem to avoid an irregular behavior of the limiting distribution on the
set $\cC$  which appears in Kesten's result.

In a forthcoming paper \cite{DF2} we verify this conjecture for boxes. 
In that case we get a 
result similar to Kesten's, namely that 
$D_N/\ln^d N$ converges to Cauchy distribution.
We note that the study of discrepancy for boxes has a long history,
see \cite{beck} and references therein.


We note that the fact that ergodic sums of smooth observables are almost surely coboundaries is the starting point
of perturbation theories for nearly integrable conservative systems. Namely for smooth perturbations, 
adiabatic invariants diffuse very slowly (Nekhoroshev theory) and the diffusion takes place on a
set of very small measure (KAM theory). A completely different behavior emerges if we consider 
piecewise smooth perturbations \cite{dSD, DF1, GK, H} but non smooth perturbations are much less
studied than the smooth ones. From this point of view our paper can be regarded as a study of the 
 diffusion speed in the simplest skew product system
\begin{equation}
 I_{n+1}=I_n+\eps A(x_n), \quad x_{n+1}=x_n+\alpha 
\end{equation}
where $A(x)=\chi_\cC(x)$ 
We hope that the results of this paper can be useful in the study of a wider class of fully coupled
perturbations such as 
\begin{equation}
 I_{n+1}=I_n+\eps A(x_n, I_n), \quad x_{n+1}=x_n+\alpha(I_n)+\eps\beta(x_n, I_n) ,
\end{equation}
but this will be a subject of a future investigation.

Another potential application of our result is to deterministic (quasi-periodic) random walks. 
In this problem (see \cite{CC} and references therein) one considers a map $A:\T^d\to\Z^q$
of zero mean and asks if the random walk
$ S_N=\sum_{n=0}^{N-1} A(x+n\a) $ returns to a given set $K$ infinitely many or only finitely many time.
The first step in the study of such problems is to find a sequence $a_N$ such that $S_N/a_N$ has a non
trivial limiting distribution. If such $a_N$ is found then assuming that $S_N$ is more or less uniformly 
distributed in the ball of radius $a_N$ we have that $\mathbb{P}(S_N\in K)$ is of order $a_N^{-q}.$
One then expects that $S_N$ visits $K$ infinitely often if and only if $\sum_N a_N^{-q}=+\infty.$ Thus while our
results are not immediately applicable to deterministic random walks they allow to make plausible conjectures
about the values of $d$ and $q$ for which  the walk is recurrent.

While the motivations mentioned above will be subject of future investigations, we provide in Section \ref{ScExt}  two, more straightforward, applications of our results. One (subsection \ref{SSSlanted})
deals with number theory
(counting lattice points in slanted cylinders) and the other 
(subsection \ref{SSRandGeo}) 
deals with geometry (measuring the
time a random geodesics spends in a ball).
\smallskip

\noindent {\bf Plan of the paper. } The rest of the paper is organized as follows. In Section \ref{s1} we provide formulas for the limiting
distributions in Theorems \ref{main.limit}  and \ref{ThCont}.
In Sections \ref{ScNonRes}--\ref{sec.oscill} we prove Theorem \ref{main.limit}.
The proof consists of three parts. In Section \ref{ScNonRes} we consider the Fourier transform of the discrepancy function
and show that the main contribution comes from a small number of resonant terms. The computations here are close to the
one-dimensional computations done in \cite{K1}.
In Section \ref{ScGeom} we use the Dani correspondence (\cite{Da}) to relate the structure of the resonances to the dynamics of homogeneous flows on the  
space of lattices in $\R^{d+1}.$ 
Namely, an approach inspired by the work of Marklof (see \cite{M-ETDS, M2}) allows us to express the limiting distribution of resonances in terms of the distribution of 
a certain function on the space of lattices.
In Section \ref{sec.oscill} we show that for the resonant  terms
the numerators and denominators are asymptotically independent and finish the proof of Theorem \ref{main.limit}. In Section \ref{ScExt}, we show how the 
arguments of Sections \ref{ScNonRes}--\ref{sec.oscill} can be modified to prove some related results such as Theorem \ref{ThCont}(b). We also relate the discrepancy of Kronecker sequences 
to lattice counting problems (Section \ref{SSSlanted}) 
and study the visits of random geodesics to balls
(Section \ref{SSRandGeo}). 
The proof of Theorem \ref{ThCont}(a) which is simpler than the other proofs in the paper is given
in Section~\ref{Contd2}. In the last section of the paper, Section \ref{L},  we show that the series defining the limiting
distributions in Theorems \ref{main.limit} and~\ref{ThCont}
converge almost surely.
(A weaker statement that those series converge in probability follows from the proofs of Theorem \ref{main.limit}
and \ref{ThCont}. The convergence
in probability is sufficient for our argument. However we prove almost sure convergence since it provides an additional insight into the 
properties of the limiting distribution.)

\section{The limit distributions}
\label{s1}

\subsection{Notation.} \label{sn1}





Before we give a  formula for $\cD_\cC$ we introduce some notations related to the space of lattices that will be used in the statements and in the proofs. 

Let $M={\rm SL}(d+1,\R)/{\rm SL}(d+1,\Z)$. $M$ is canonically identified with
the space of unimodular lattices of $\R^{d+1}$.
Given $L \in M$ we denote by $e_1$ the shortest vector in $L.$ We then define inductively $e_2,\ldots,e_{d+1}$ such that for each $i \in [2,d+1]$, $e_i$ is the shortest vector in $L$ among those having the shortest nonzero projection on the orthocomplement of the plane generated by $e_1,\ldots,e_{i-1}$.
Clearly, the vectors $e_1(L),\ldots,e_{d+1}(L)$ are well defined outside a set of Haar  measure 0. Also, it is possible to show by induction on $d$ that the latter vectors generate the lattice (see \cite{Ar}, Lemma 49.3).

Let $\cZ$ be the set of primitive vectors $m \in \Z^{d+1}$ (i.e. with mutually coprime components) and such that if $i_0$ is the smallest integer in $[1,d+1]$ such that $m_i\neq 0$ then  $m_{i_0}>0$ (we add the latter condition to make sure not to count $-m$ in $\cZ$ for an  $m \in \cZ$).
Let 
\begin{equation}
T^\infty=\T^{d+1}\times \T^{\cZ}, \quad T_2^\infty=\T^{d+1}\times \T^{\cZ} \times \T^{\cZ}.
\end{equation}
We denote elements of $T^\infty$ by $(\theta, b)$ 
and the elements of $T_2^\infty$ by $(\theta, b, b').$
For $m \in \cZ$ and $L \in M$, we denote by $(m,e)$ 
the vector $\sum_{i \leq d+1} m_i e_i(L),$ by $X_m=(X_{m,1},\ldots,X_{m,d})$ 
its first $d$ coordinates, and by $Z_m$ its last coordinate. We also define $R_m={\left(\sum_{i\leq d }X_{m,i}^2\right)}^{\frac{1}{2}}$.




\subsection{Limit distribution in the case of translations.} 
Let $\mathcal{C}$ be a strictly convex body with smooth boundary. This means that $\partial \mathcal C$ is a smooth hypersurface of $\R^d$ with strictly positive  gaussian curvature, or equivalently that $\partial {\mathcal C}$ is  a smooth manifold isomorphic under the normal mapping to the unit sphere $\mathbb{S}_{d-1}$.  
For each vector $\xi \in {\mathbb S}_{d-1}$ there exists a unique point $x(\xi)\in \partial C$ at which the unit outer normal vector is $\xi$. We denote by $K(\xi)$ the gaussian curvature of $\partial C$ at this point.

Denote 
\begin{equation}
\cM_d=M\times T^\infty \text{ and }\cM_{2,d}=M\times T_2^\infty 
\end{equation}

By abusing sligtly the notation we let  $\mu$ denote the Haar measures on both $\cM_d$ and $\cM_{2,d}.$  
Consider the following function on $\cM_{2,d}$
\begin{equation}
\label{LatTor}
 \cL'_{{\mathcal C}}(L, \theta, b,b')=\frac{1}{\pi^2} \sum_{m\in \cZ}\sum_{p=1}^\infty k(p,m,\theta)
\frac{\sin (\pi p Z_m)}{R_m^{\frac{d+1}{2}} Z_m p^{\frac{d+3}{2}}}
\end{equation}
with 
\begin{multline} 
\label{DefK}
k(p,m,\theta)=  K^{-\frac{1}{2}}(X_m/R_m)  \sin(2\pi(p b_m+p(m,\theta) -(d-1)/8)) \\ +K^{-\frac{1}{2}}(-X_m/R_m)  \sin(2\pi(p b'_m-p(m,\theta)-(d-1)/8))\end{multline}
For the case of symmetric bodies, we define on the space $\cM_d$ the function  
\begin{multline}
\label{LimSumSym}
 \cL_{{\mathcal C}}(L, \theta, b)= \\  \frac{2}{\pi^2} \sum_{m\in \cZ}\sum_{p=1}^\infty
 K^{-\frac{1}{2}}(X_m/R_m) 
 \frac{\cos(2\pi p(m,\theta)) \sin(2\pi (p b_m-(d-1)/8)) \sin (\pi p Z_m)}
{R_m^{\frac{d+1}{2}} Z_m p^{\frac{d+3}{2}}}. 
\end{multline}

We now give the description of the distribution $\cD_{\mathcal C}$ of Theorem~\ref{main.limit} 
\begin{prop} 
\label{PrNS}
If ${\mathcal C}$ is an analytic non
symmetric strictly convex body in $\R^d$, then for any $z \in \R$ we have  
\begin{equation}
\label{distr.level}
\cD_{\mathcal C}(z) =  \mu\left\{ (L, (\theta, b,b'))\in \cM_{2,d}  : \cL'_{\mathcal C}(L,\th, b,b') \leq z \right\}.
\end{equation}
If ${\mathcal C}$ is symmetric then, for any $z \in \R$ we have 
\begin{equation}
\label{distr.level2}\cD_{\mathcal C}(z) =  \mu\left\{ (L, (\theta, b))\in \cM_d : \cL_{\mathcal C}(L,\th, b) \leq z \right\}.\end{equation}
\label{main3} \end{prop}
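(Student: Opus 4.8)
The plan is to establish Proposition \ref{PrNS} by carrying out the three-part program announced in the introduction and then identifying the limiting object explicitly with the level-set distribution of $\cL'_\cC$ (resp.\ $\cL_\cC$). First I would take the Fourier expansion of $\chi_{\cC_r}$ on $\T^d$; since $\cC$ is analytic and strictly convex, the Fourier coefficients $\widehat{\chi_{\cC_r}}(k)$ admit the classical stationary-phase asymptotics involving the Gaussian curvature $K$ at the two points of $\partial\cC_r$ with outer normal $\pm k/|k|$, producing exactly the amplitude $K^{-1/2}(\pm X_m/R_m)$ and the phase shift $-(d-1)/8$ that appear in \eqref{DefK}. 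Summing the geometric series for $D_\cC(r,\a,x,N)=\sum_{k\neq 0}\widehat{\chi_{\cC_r}}(k)\,e^{2\pi i\langle k,x\rangle}\,\frac{e^{2\pi i N\langle k,\a\rangle}-1}{e^{2\pi i\langle k,\a\rangle}-1}$ shows that the size of a term is governed by the small divisor $\|\langle k,\a\rangle\|$, and a truncation argument (as in \cite{K1}) shows that, after normalizing by $r^{(d-1)/2}N^{(d-1)/2d}$, only the finitely many \emph{resonant} $k$ with $\|\langle k,\a\rangle\|\lesssim N^{-1}$ and $|k|\lesssim N^{1/d}$ contribute in the limit; the non-resonant tail is negligible in probability.

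Second I would invoke the Dani correspondence: the pair $(\a,N)$ is encoded by a point $g_t u_\a \Z^{d+1}$ in $M$ under a suitable diagonal/unipotent flow, with $t\sim \frac{d}{d+1}\ln N$, so that the rescaled resonant frequencies $k$ become the short vectors of the lattice $L=g_tu_\a\Z^{d+1}$, their divisor $\|\langle k,\a\rangle\|$ (times the right power of $N$) becomes the last coordinate $Z_m$, and $|k|$ (rescaled) becomes $R_m$, where $m$ ranges over $\cZ$ and $(m,e)=(X_m,Z_m)$ in the reduced basis $e_1(L),\dots,e_{d+1}(L)$ defined in \S\ref{sn1}. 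Equidistribution of the expanding horospheres / mixing of the diagonal flow on $M$ (Marklof's method, \cite{M-ETDS,M2}) then gives that $L$ becomes Haar-distributed on $M$ in the limit. This turns the discrepancy into a sum over $m\in\cZ$ and $p\ge 1$ (the $p$ coming from $k=pm$ with $m$ primitive) of terms $R_m^{-(d+1)/2}\,\frac{\sin(\pi p Z_m)}{Z_m\, p^{(d+3)/2}}$ times the oscillatory factor $k(p,m,\theta)$, matching \eqref{LatTor}; the convergence of this series a.s.\ is what Section \ref{L} is for, and convergence in probability — enough here — follows from the truncation estimates.

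Third, I would show that the oscillatory phases $p\langle k,x\rangle$ and the phases $p b_m$, $p b'_m$ coming from the positions of the stationary points of $\partial\cC_r$ relative to the lattice $r\Z$ scaled by $N^{1/d}$ equidistribute on the infinite-dimensional torus $T^\infty$ (resp.\ $T_2^\infty$), \emph{independently} of the lattice data $L$ that controls $R_m,Z_m$. This is the step where randomness of $r$ is essential: varying $r$ over $[a,b]$ decouples the "numerators" (the $\theta,b,b'$ variables) from the "denominators" ($R_m,Z_m$), because the derivative in $r$ of the relevant phases is of a different order of magnitude, so by a Weyl-type / van der Corput argument the joint distribution converges to the product Haar measure $\mu$ on $\cM_{2,d}=M\times T_2^\infty$. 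Combining the three steps identifies the limit of $D_\cC/(r^{(d-1)/2}N^{(d-1)/2d})$ in distribution with $\cL'_\cC(L,\theta,b,b')$ under $\mu$, which is precisely \eqref{distr.level}; in the symmetric case the two stationary points carry equal curvature and $b_m=b'_m$, so the sum collapses to \eqref{LimSumSym} and gives \eqref{distr.level2}.

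The main obstacle I expect is the third step — proving the asymptotic independence of numerators and denominators for the resonant terms, i.e.\ that the equidistribution on $M$ can be upgraded to joint equidistribution on $M\times T_2^\infty$ with the product measure. One has to handle simultaneously the homogeneous dynamics on the space of lattices and the additive phases on a growing (eventually infinite) number of tori, control the error terms uniformly in the number of resonant frequencies, and show that the (finitely many, but unboundedly many as $N\to\infty$) resonant phases are "generic"; the random scaling $r$ is the device that makes this work, but quantifying the decoupling rate and showing it survives passage to the $N\to\infty$ limit (and to the full infinite sum over $\cZ$) is the delicate part. The other steps — stationary phase for $\widehat{\chi_{\cC_r}}$, the small-divisor truncation, and the Dani correspondence — are technically involved but essentially follow established patterns (\cite{K1,Da,M-ETDS}).
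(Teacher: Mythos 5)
Your three-part plan --- Fourier truncation to resonant harmonics, Dani correspondence to turn resonant $k$ into short lattice vectors, and decoupling of numerators from denominators via the random scaling $r$ --- is exactly the architecture of the paper's proof (Sections \ref{ScNonRes}, \ref{ScGeom}, \ref{sec.oscill}). You also correctly identify the third step as the delicate one. However, as you describe it, step 3 contains a genuine gap: the observation that ``the derivative in $r$ of the relevant phases is of a different order of magnitude'' shows, at best, that a \emph{single} phase $A_m=rN^{1/d}P(X_m)$ equidistributes as $r$ varies. It does not by itself rule out rational resonances \emph{between} different $A_{m_1},\dots,A_{m_K}$ for distinct primitive $m_i$ (e.g.\ an identity $P(X_{m_1})\equiv 2P(X_{m_2})$ on a positive-measure set of lattices would force the pair $(A_{m_1},A_{m_2})$ onto a line, and the limit measure would then be Haar on a strict subtorus of $T^\infty$, not on $T^\infty$). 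Your van-der-Corput integration by parts in $r$ only kills a character $\exp(2\pi i\sum_m l_m A_m)$ when the combination $\sum_m l_m P(X_m)$ is bounded away from $0$ --- and establishing that this happens for generic $L$ whenever some $l_m\neq 0$ is precisely where the analyticity of $\cC$ enters and where the real work is.

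Concretely, the paper supplies this missing ingredient in Section \ref{secsec52}: Proposition \ref{independence} shows that for any primitive $m_1,\dots,m_K\in\cZ$ the functions $p_{m_i}(x,y)=P((m_i,x),(m_i,y),0,\dots,0)$ are linearly independent over $\Q$, and the proof of that proposition hinges on Lemma \ref{analytic} (the restriction $\delta\mapsto P\circ L(1,\delta,0,\dots)$ of the support function to any plane is analytic but \emph{not} a polynomial), combined with the freedom to choose the test direction $\a$ so that $(m_j,\a)$ is small while the other $(m_i,\a)$ stay bounded away from zero. This yields \eqref{ResLat}--\eqref{AlmRes}, which is what makes the integration by parts in $r$ produce $o(1)$. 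Your proposal also omits the second half of the claim: proving that for a \emph{non}-symmetric body the two families $\{P(X_m)\}$ and $\{P(-X_m)\}$ are jointly independent over $\Q$, so that the full torus $T_2^\infty$ (and not merely a subtorus containing the diagonal) appears. That is the content of Lemma \ref{LmNonSym}, a genuine dichotomy: if the relevant second-derivative ratios of $f_L$ and $\tf_L$ agree for every $L$, the paper shows $P(x)-P(-x)$ is globally linear and hence $\cC$ has a center of symmetry. Without this lemma you cannot distinguish the symmetric from the non-symmetric case; your remark that ``in the symmetric case $b_m=b'_m$, so the sum collapses'' addresses the easy direction only. The rest of your outline (stationary-phase asymptotics \`a la Herz, the $L^2$ truncation, the Dani/Marklof homogeneous-dynamics step and Proposition \ref{finalreduction}) is faithful to the paper.
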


\begin{remark}
Note that $T^\infty$ is embedded into $T_2^\infty$ as a diagonal 
$$T^\infty=\{b_m'=b_m\}$$ 
and that $\cL_\cC'$ restricted to $T^\infty$ reduces to $\cL_\cC.$
Thus the proof of Theorem \ref{main.limit} will consist of two parts. First, we will see that for any analytic body the limiting distribution
will be given by \eqref{distr.level} where $\mu$ is a product of the Haar measure on $M$ and a Haar measure on a subtorus of
$T_2^\infty$ and, second,  we will show in sections \ref{secsec51} and \ref{secsec52} 
that the only subtori which can appear are $T^\infty$ and $T_2^\infty.$
\end{remark}

\begin{remark} {\rm We will see that the conclusions of Theorem \ref{main.limit} and of Proposition  \ref{main3}
 actually  hold 
for {\it generic} strictly convex symmetric bodies and {\it generic} strictly convex bodies respectively 
with a $C^{\nu}$ boundary where $\nu=(d-1)/2$. 
We will explain in  section \ref{sec.generic} 
what are the conditions required of these generic convex bodies.}
\end{remark}

\subsection{Limit distribution in the case of flows.} \label{sectflow}

In the  case of flows, we start by describing the limit distribution in the two-dimensional situation.
\begin{prop} \label{prop.theorem3a}  If ${\cC}$ is  analytic
strictly convex body in $\R^2$ that fits inside the unit cube, then the distribution of $\bD_\cC(v, x, T)$ of 
Theorem \ref{ThCont} $(a)$ converges as $T\to\infty$
to the distribution 
\begin{align} \label{dist.dim2} \brsD_{\cC, v} (z)&={\rm Leb} \left\{ (y,\theta) \in \T^2 \times \T^2 :  \cL_{v} (y,\theta) \leq z\right\} \\
 \cL_{v} (y,\theta) &=\sum_{k\in\Z^2-0} c_k e^{2\pi i (k, y)} \frac{\sin(\pi(k, \theta))}{\pi (k,v)}  \label{L.dim2} \end{align}
where $c_k$ are the Fourier coefficients of $\chi_\cC.$
\end{prop}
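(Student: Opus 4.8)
The plan is to follow the same three-step scheme announced for Theorem \ref{main.limit}, but in the considerably simpler setting $d=2$ where no small-divisor blow-up occurs and no renormalization is needed. First I would write the continuous-time discrepancy $\bD_\cC(v,x,T)$ via its Fourier series. Since $\chi_{\cC}=\sum_{k\in\Z^2} c_k e^{2\pi i(k,y)}$, integrating $\chi_\cC(x+tv)-\Vol(\cC)$ over $t\in[0,T]$ gives
\begin{equation*}
\bD_\cC(v,x,T)=\sum_{k\in\Z^2-0} c_k e^{2\pi i(k,x)}\,\frac{e^{2\pi i T(k,v)}-1}{2\pi i (k,v)}.
\end{equation*}
Because $\cC$ is analytic and strictly convex, the classical van der Corput estimate on the decay of Fourier coefficients of indicators of such bodies gives $|c_k|=O(|k|^{-3/2})$, so the series converges absolutely and uniformly once $(k,v)$ is bounded away from $0$; for $v$ in the (compact, origin-avoiding) support of $p$ the denominators $(k,v)$ are never zero, and one controls the contribution of $k$ with $|(k,v)|$ small by a standard counting/truncation argument exactly as in Section \ref{ScNonRes}. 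Thus the tail beyond $|k|\le K$ is negligible in probability uniformly in $T$.

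Next, the heart of the matter is the joint equidistribution of the phases. Fix the truncation level $K$. I would consider the finite vector of data $\big((k,x)\bmod 1,\ T(k,v)\bmod 1\big)_{0<|k|\le K}$ as $(v,x)$ ranges over the support of $\bar\sigma$ and $T\to\infty$. For $x$ Haar-distributed on $\T^2$, the phases $(k,x)$ become, in the limit, independent uniform variables indexed by primitive $k$ (the non-primitive $k$ are determined by multiples). For the $T(k,v)$ part, since $v$ has a smooth density and $T\to\infty$, the map $T\mapsto (T(k,v))_{|k|\le K}$ winds around the torus in the direction $((k,v))_{|k|\le K}$, and for a.e.\ $v$ this direction is totally irrational on the subtorus cut out by the $\Z$-linear relations among the $(k,v)$ (which are exactly the relations $\sum n_k (k,v)=0$, i.e. $\sum n_k k=0$); averaging in $v$ against the smooth density, one gets in the limit that $(T(k,v))_{|k|\le K}$ is Haar-distributed on that subtorus, \emph{independently} of $x$ and with $v$ as parameter. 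Collecting these: writing $\theta_k$ for the limiting value of $T(k,v)$ and $y_k$ for the limiting value of $(k,x)$, and noting $(e^{2\pi i\theta_k}-1)/(2\pi i(k,v)) = e^{\pi i\theta_k}\sin(\pi\theta_k)/(\pi(k,v))$, the truncated sum converges in distribution to $\sum_{0<|k|\le K} c_k e^{2\pi i(k,y)}\,\sin(\pi(k,\theta))/(\pi(k,v))$ after absorbing $e^{\pi i\theta_k}$ into the $y$-phase. The crucial simplification in $d=2$ is that the relations $\sum n_k k=0$ among integer vectors force all the involved $k$ to be parallel, so the subtorus and the resulting expression are parametrized cleanly by a single $\theta\in\T^2$ (one phase per primitive direction), which is why \eqref{L.dim2} is an honest sum over $k\in\Z^2-0$ with a single $\theta$ variable — the diagonal identification analogous to the remark after Proposition \ref{PrNS}.

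Finally I would let $K\to\infty$: the almost-sure convergence of the series $\sum_k c_k e^{2\pi i(k,y)}\sin(\pi(k,\theta))/(\pi(k,v))$ over $\cM$-type variables is precisely the content deferred to Section \ref{L} (using $|c_k|=O(|k|^{-3/2})$ and the fact that for a.e.\ $v$ the small-divisor set $\{k: |(k,v)|<|k|^{-1-\eps}\}$ is sparse enough), so the limiting distributions $\brsD_{\cC,v}$ are well defined, and a standard "$\eps/3$" argument (truncation error small uniformly in $T$, convergence at fixed $K$, tail of the limit object small) upgrades the fixed-$K$ convergence to convergence of the full discrepancy, giving \eqref{dist.dim2}–\eqref{L.dim2}. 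The main obstacle is the joint equidistribution step — in particular identifying exactly which subtorus the vector $(T(k,v))_{|k|\le K}$ fills out as $T\to\infty$ and checking, after averaging over the smooth $v$-density, that it is independent of the $x$-phases; this is where one must be careful about the integer linear relations among the frequencies, though in dimension two these relations are so rigid (forced collinearity) that the argument is markedly easier than its higher-dimensional counterpart, which is the reason this proof is postponed to the short Section \ref{Contd2}.
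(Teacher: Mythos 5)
Your overall scheme — Fourier-expand the continuous-time discrepancy, control the tail $|k|>K$ using the decay $|c_k|=O(|k|^{-3/2})$ and a Diophantine/counting bound on small $|(k,v)|$, prove convergence in distribution at fixed cutoff via equidistribution of $Tv$, and then let $K\to\infty$ — is the same as the paper's, which likewise splits $\bD_\cC$ into a low-mode piece $\bD^n_-$ and a high-mode piece $\bD^n_+$, bounds $\|\bD^n_+ 1_{A_n}\|_{L^2}$ using $|A_{k,p}|\le C|k|^{-2-\eps}$, and then observes that on small pieces $\Omega_i$ of the $v$-support the denominators are nearly constant while $vT$ equidistributes on $\T^2$.

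However, your explanation of the joint-equidistribution step contains a genuine error. You claim that the $\Z$-linear relations $\sum n_k k=0$ among the integer vectors $k\in\Z^2$ with $|k|\le K$ "force all the involved $k$ to be parallel," and you use this to justify that the closure of $(T(k,v))_{|k|\le K}$ is a two-torus. This is false: for instance $k_1=(1,0)$, $k_2=(0,1)$, $k_3=(1,1)$ satisfy $k_1+k_2-k_3=0$ with no two of them parallel. If relations really did force collinearity, the orbit closure of $T\mapsto (T(k,v)\bmod 1)_{|k|\le K}$ would be a torus of dimension close to the number of non-parallel $k$, not a two-torus, and the limit law would be wrong. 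The correct and much simpler observation, which the paper uses directly, is that every coordinate $(k,Tv)$ is a fixed $\Z$-linear functional of the single variable $\theta=Tv\in\T^2$, so one only needs that $\theta=Tv\bmod\Z^2$ equidistributes on $\T^2$ as $T\to\infty$ for $v$ in a small set with smooth density; no analysis of relations is required, and the limit is automatically parametrized by $\theta\in\T^2$. (Likewise, the phases $(k,x)$ are not "independent uniform variables indexed by primitive $k$"; they are all determined by $y\in\T^2$, and the only thing used is that $y$ is uniform on $\T^2$.) Your absorption of the factor $e^{\pi i(k,Tv)}$ into the $y$-phase via $y=x+vT/2$ is correct, and your appeal to the decay of $c_k$ plus a Diophantine tail bound to pass $K\to\infty$ matches the paper. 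So: right structure, right final formula, but the "forced collinearity" justification for why the limit object involves a single $\theta\in\T^2$ is wrong and should be replaced by the direct observation that $\theta=Tv$ is itself a $\T^2$-valued variable.
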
 

The case $d=3$ is completely distinct and will be dealt with in \cite{DF2}. The limit distribution in the case $d\geq 4$ is as follows.  
\begin{prop} \label{prop.theorem3b} 
If $d \geq 4$ and ${\mathcal C}$ is  analytic
symmetric strictly convex body in $\R^d$ that fits inside the unit cube, then  the distribution $\sD_{\cC,v} (z)$ of Theorem \ref{ThCont} $(b)$ is given by  
\begin{equation} \label{dist.dimd1} \sD_{\cC,v} (z) =  \mu\left\{ (L, (\theta, b))\in \cM_d : \sL_v(L,\th, b) \leq z \right\} \end{equation} 
where 
\begin{multline} \label{dist.dimd2} \sL_{v}(L, \theta, b)= \\ \frac{2}{\pi^2} \sum_{m\in \cZ}\sum_{p=1}^\infty K^{-\frac{1}{2}}(X_m/R_m)  \frac{\cos(2\pi p(m,\theta)) \sin(2\pi (p b_m-(d-1)/8)) \sin (\pi p {\rho} Z_m)}  {  p^{\frac{d+3}{2}}  {\rho} Q_m^{\frac{d+1}{2}} Z_m}. \end{multline}
Here we write $v=\rho(\a_1,\ldots,\a_{d-1},1)$, $X_{m,s}$ and $R_m$ are defined as in Section \ref{sn1} with 
$(L, \theta, b) \in \cM_{d}$ instead of $(L, \theta, b) \in \cM_{d+1}$ and  
$$Q_m^2=R_m^2+\left(\sum_{s=1}^{d-1} {\alpha}_s X_{m,s} \right)^2.$$

In the case of non-symmetric  strictly convex body, the same statement holds 
except that the limiting distribution is given by
\begin{equation} \label{dist.dimd3} \sL'_{v}(L, \theta, b)= \frac{2}{\pi^2} \sum_{m\in \cZ}\sum_{p=1}^\infty   
k(p, m, \theta) \frac{ \sin (\pi p {\rho} Z_m)}  {  p^{\frac{d+3}{2}}  {\rho} Q_m^{\frac{d+1}{2}} Z_m}. \end{equation}
where $k(p, m, \theta)$ is given by \eqref{DefK}.
\end{prop}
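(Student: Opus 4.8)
The plan is to follow the same three-part architecture used for Theorem~\ref{main.limit}, making the modifications forced by the continuous-time setting and the special geometry of flows. First I would write the continuous discrepancy $\bD_\cC(r,v,x,T)$ via its Fourier expansion: since $\chi_{\cC_r}$ has Fourier coefficients with the stationary-phase asymptotics governed by the curvature $K$ (the asymptotics already packaged into $k(p,m,\theta)$ in \eqref{DefK}), and since integrating $e^{2\pi i(k,S_v^t x)}$ from $0$ to $T$ produces the small divisor $(k,v)$ rather than the one-dimensional divisor $\|k\cdot\alpha\|$, one gets an expression of the form $\sum_k \hat\chi_{\cC_r}(k)\, e^{2\pi i(k,x)}\,\frac{e^{2\pi i T(k,v)}-1}{2\pi i (k,v)}$. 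As in Section~\ref{ScNonRes} I would show that, after the normalization $r^{(d-1)/2}T^{(d-3)/(2(d-1))}$, the sum is dominated by a bounded number of resonant frequencies $k$, namely those with $\|k\|$ of the appropriate size and $|(k,v)|$ correspondingly small; the non-resonant tail is controlled by a second-moment estimate exactly parallel to the discrete case, with the only new ingredient being the replacement of the number-theoretic count of small values of $\|n\alpha\|$ by the count of lattice points $k$ with $|(k,v)|$ small, which is a purely geometric (volume) estimate.

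Second, I would invoke the Dani-type correspondence. Writing $v=\rho(\alpha_1,\dots,\alpha_{d-1},1)$ as in the statement, the resonant frequencies $k\in\Z^d$ with $|(k,v)|$ small correspond, under the appropriate diagonal flow on $M=\mathrm{SL}(d+1,\R)/\mathrm{SL}(d+1,\Z)$ acting on the lattice built from $\alpha$ and $v$, to short vectors of an evolving unimodular lattice in $\R^{d+1}$; equidistribution/mixing of this flow (following Marklof, as in Section~\ref{ScGeom}) converts the distribution of the configuration of resonances into an integral over $\cM_d=M\times T^\infty$. The reason the relevant space is $\cM_d$ built over $\R^{d+1}$ rather than $\R^{d+2}$, and the reason the radial variable $R_m$ gets replaced by $Q_m^2=R_m^2+(\sum_s\alpha_s X_{m,s})^2$, is precisely that one of the $d+1$ coordinate directions is spent encoding the linear form $(k,v)$ itself — the divisor $(k,v)$ becomes (up to the factor $\rho$) the last coordinate $\rho Z_m$ of the image vector, and $Q_m$ is the length of the projection onto the remaining $d-1$ ``spatial'' coordinates together with the $\alpha$-twisted combination; this is where the extra factor $\left(\sum_{s=1}^{d-1}\alpha_s X_{m,s}\right)^2$ in $Q_m^2$ comes from. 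The curvature weights $K^{-1/2}(\pm X_m/R_m)$ and the phases $pb_m$, $p(m,\theta)$ enter exactly as in \eqref{DefK}–\eqref{LimSumSym}, so the symmetric case yields \eqref{dist.dimd2} and the non-symmetric case \eqref{dist.dimd3} with the same $k(p,m,\theta)$.

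Third, for the identification of the phase variables I would repeat the oscillation/independence argument of Section~\ref{sec.oscill}: the phases $T(k,v)$ (mod $1$) and the combinations $(k,x)$ (mod $1$) attached to distinct resonant $k$ become, in the limit, independent and uniformly distributed, independent of the lattice coordinates $(L)$ coming from the Dani correspondence; this produces the extra torus factors $\T^{d+1}\times\T^{\cZ}$ (symmetric case) or $\T^{d+1}\times\T^{\cZ}\times\T^{\cZ}$ (non-symmetric case) and fixes the Haar measure $\mu$ on $\cM_d$ resp.\ $\cM_{2,d}$. Finally one checks the series \eqref{dist.dimd2}, \eqref{dist.dimd3} converge almost surely (deferred, as in the paper, to Section~\ref{L}; note $d\ge 4$ makes the exponents $p^{-(d+3)/2}$, $Q_m^{-(d+1)/2}$ summable in the relevant sense, which is exactly why $d=3$ is excluded).

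The main obstacle I expect is the second step's bookkeeping for flows: one must verify that, with $v$ drawn from a smooth compactly supported density $p(v)$ away from the origin, the joint law of $(\alpha_1,\dots,\alpha_{d-1},\rho)$ and the diagonal-flow orbit equidistributes on $M$ in a way compatible with conditioning on $\rho$, so that the final answer is an average $\int \sD_{\cC,v}(z)\,p(v)\,dv$ with the $\rho$-dependence appearing explicitly through $\rho Z_m$ and $Q_m$ while the $\alpha$-dependence is ``integrated out'' into the lattice. Checking that the contribution of near-parallel or near-resonant directions $v$ is negligible, and that the curvature factors survive the stationary-phase analysis uniformly in $v$ over its compact support, is where the real work lies; the non-symmetric case adds the extra torus $\T^\cZ$ of phases $b'_m$ but is otherwise identical, as in the discrete setting.
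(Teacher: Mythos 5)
Your overall architecture (Fourier expansion, restriction to resonant harmonics via a second-moment estimate, Dani correspondence, independence of numerator phases, averaging over the direction of $v$) matches the paper's proof, and you correctly identify $Q_m$ as the rescaled length of the full frequency vector $k$ after eliminating its last component via the resonance condition. However, there is a genuine error in the Dani-correspondence step that goes beyond bookkeeping.

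You assert that the relevant lattice space is $\mathrm{SL}(d+1,\R)/\mathrm{SL}(d+1,\Z)$, with lattices in $\R^{d+1}$ obtained by acting with a diagonal flow on a lattice built from $\alpha$ and $v$, and you rationalize this by saying ``one of the $d+1$ coordinate directions is spent encoding the linear form $(k,v)$''. This is wrong, and moreover internally inconsistent with your own claim that there are $d-1$ spatial coordinates plus an $\alpha$-twisted combination. In the discrete translation case the lattice lives in $\R^{d+1}$ because one must adjoin the nearest integer $k_{d+1}$ to $-(k,\alpha)$; the discrete-time small divisor is $\{k,\alpha\}=(k,\alpha)+k_{d+1}$. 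In the continuous-flow case, after writing $v=\rho(\alpha_1,\dots,\alpha_{d-1},1)$, the small divisor is $(k,v)=\rho(\alpha_1 k_1+\dots+\alpha_{d-1}k_{d-1}+k_d)$: the role of the ``extra'' integer $k_{d+1}$ is already played by $k_d$, so no dimension is gained. The correct homogeneous space is $\mathrm{SL}(d,\R)/\mathrm{SL}(d,\Z)$, lattices in $\R^d$, with $\Lambda_\alpha$ the $d\times d$ unipotent matrix whose last row is $(\alpha_1,\dots,\alpha_{d-1},1)$, the diagonal flow $g_n=\mathrm{diag}(e^{-n/(d-1)},\dots,e^{-n/(d-1)},e^n)$, $X_s=k_s/T^{1/(d-1)}$ for $s\le d-1$, and $Z=T(\alpha_1 k_1+\dots+\alpha_{d-1}k_{d-1}+k_d)$, so that $R_m^2=\sum_{s=1}^{d-1}X_{m,s}^2$ and $k_d/T^{1/(d-1)}\approx -\sum_{s}\alpha_s X_{m,s}$ gives $|k|^{(d+1)/2}\approx T^{(d+1)/(2(d-1))}Q_m^{(d+1)/2}$. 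Getting this dimension right is not cosmetic: the exponent $1/(d-1)$ in $g_n$, the normalization $T^{(d-3)/(2(d-1))}$, the compactness/equidistribution statement used, and the eventual formula for $\sL_v$ all depend on working in $\R^d$.

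Two smaller points. First, the mechanism for producing the $v$-dependence in the limiting distribution is to partition the support of $p$ into small sets $\Omega_j$ on which $v$ is nearly constant, prove a limiting distribution depending on the representative $\bar v\in\Omega_j$, and then refine; your ``integrate the $\alpha$-dependence out into the lattice'' phrasing is not how the $\alpha_s$ enter (they enter explicitly through $Q_m$ and are not averaged away). Second, the exclusion of $d=3$ is not because the series \eqref{dist.dimd2} fails to converge there; it converges for all $d\geq 2$. Rather $d=3$ is the critical case where the normalization becomes $r\ln T$ and the limit is Cauchy, requiring a separate argument as noted in the paper.
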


\section{Non-resonant terms}
\label{ScNonRes}

In this section we study Fourier transform of the discrepancy  function and show that the main contribution
comes from a small number of resonant harmonics.

In all the sequel we fix $\eps>0$ arbitrarily small. We will use the notation $C$ for constants that may vary from one line to the other but that do not depend on anything but the dimension $d$.
 
\subsection{} \label{convex}  We shall use the asymptotic formula for the Fourier coefficients of the 
indicator function $\chi_\cC$ of a smooth strictly convex body $\cC$ 
obtained in \cite{herz}.

For any vector $t\in \R^d$ define $P(t)=\sup_{x \in \partial \cC} (t,x)$. The main result of \cite{herz} is that 
if $\cC$ is of class $C^\nu$ where $\nu=\frac{d-1}{2}$ then
\begin{equation}
(2\pi i |t|) \widehat{\chi}_{\mathcal C}(t)=\rho(\cC,t)-\bar{\rho}(\cC,-t)
\end{equation}
with 
\begin{equation}
\rho(\cC,t)={|t|}^{-\frac{d-1}{2}}K^{-\frac{1}{2}}(t/|t|) e^{i2\pi (P(t)-(d-1)/8)} +\cO(|t|^{-\frac{d+1}{2}}).
\end{equation}

 If we group the $k$ and $-k$ terms in the Fourier series we get 
\begin{align}
\label{fourier.strictly.convex}
\chi_{\cC_r}(x)-\Vol(\cC_r)&= {r^{\frac{d-1}{2}}} \sum_{k\in \Z^d-\{0\}} c_k(r,x) \\ \nonumber
c_k(r)&=d_k(r,x)
+\cO\left(|k|^{-\frac{d+3}{2}}\right)  \\ \nonumber 
d_k(r,x)&=    \frac{1}{2\pi}    \frac{g(k,r,x) +g(-k,r,x) }{{|k|}^{\frac{d+1}{2}}} \\Ê\nonumber
g(k,r,x)&= K^{-\frac{1}{2}}(k/|k|) \sin \left(2\pi ( rP(k)- (d-1)/8 +(k,x))\right)
\end{align}
which in the case of a symmetric body becomes 
\begin{align}
\label{fourier.strictly.convex.symmetric}
\chi_{\cC_r}(x)-\Vol(\cC_r)&= { r^{\frac{d-1}{2}}} \sum_{k\in \Z^d-\{0\}} c_k(r) \cos(2\pi(k,x))\\ \nonumber
c_k(r)&=d_k(r)
+\cO\left(|k|^{-\frac{d+3}{2}}\right)  \\ \nonumber 
d_k(r)&= \frac{1}{\pi}    \frac{g(k,r)  }{{|k|}^{\frac{d+1}{2}}}   \\
\nonumber
g(k,r)&=K^{-\frac{1}{2}}(k/|k|)\sin(2\pi(rP(k)-(d-1)/8)).
\end{align}

\subsection{} Throughout Section \ref{ScNonRes}, to simplify the notations in our manipulations of the Fourier series of the characterisitc functions of the sets included in $\cC_r$, we will assume the shape is symmetric and use therefore the formula (\ref{fourier.strictly.convex.symmetric}). We will see in Section \ref{sec.oscill} what are the necessary changes to be made in the case of a non symmetric body.

From now on we will use the notation, for $k=(k_1, \ldots,k_d)$ and $\a=(\a_1,\ldots,\a_d)$,  $\{ k, \a \} :=(k,\a)+{k_{d+1}}$ where ${k_{d+1}}$ is the unique integer such that  $-\frac{1}{2}<(k,\a)+{k_{d+1}}\leq \frac{1}{2}.$ 
To evaluate $D_{\mathcal C}(r,\a,x,N)  = \sum_{n=0}^{N-1} \chi_{{\mathcal C}_r}(T_\a^n x) - N \Vol(\cC_r)$, we sum up term by term in the Fourier expansion (\ref{fourier.strictly.convex.symmetric}) of $ \chi_{{\mathcal C}_r}$. Thus, introduce  the notation  
\begin{equation}
f(r,\a,x,N,k)=  
\end{equation}
$$ c_k(r) \frac{\cos(2\pi (k,x) +\pi(N-1)\{k,\a\}) \sin (\pi N\{k,\alpha\})}{ N^{\frac{d-1}{2d}} \sin(\pi \{k,\alpha\})} $$
so that we are interested in the distribution of 
\begin{equation}
\Delta(r,\a,x,N)  =  \sum_{k\in \Z^d-\{0\}} f(r,\a,x,N,k)
\end{equation}

\subsection{} Given a set $S$, for functions $h$ defined on $\T^{2d} \times S$, we denote by $\|h\|_2$ the supremum of the $L^2$ norms $\|h(\cdot,s)\|$ over all $s \in S$. Let
\begin{equation}\bar{\Delta}(r,\a,x,N) =  \sum_{k \in \Z^d-\{0\} \ : \ 0<|k|^2<\frac{N^{\frac{2}{d}}}{\eps}}  f(r,\a,x,N,k).\end{equation}
\begin{lemma} We have 
\begin{equation} \label{bsup} {{\| \Delta-\bar{\Delta}\|}_2} \leq C \eps^{1/4} \end{equation}
\end{lemma}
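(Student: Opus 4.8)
The goal is to show that truncating the Fourier series of the discrepancy function at frequencies $|k|^2 \lesssim N^{2/d}$ costs only $O(\eps^{1/4})$ in $L^2$. The plan is to estimate $\|\Delta - \bar\Delta\|_2^2$ directly by expanding it as a sum over the high frequencies $|k|^2 \geq N^{2/d}/\eps$ and exploiting orthogonality in the $x$ variable. First I would write $\Delta - \bar\Delta = \sum_{|k|^2 \geq N^{2/d}/\eps} f(r,\a,x,N,k)$ and integrate $|\Delta - \bar\Delta|^2$ over $x \in \T^d$. Because each $f(r,\a,x,N,k)$ is, up to the factor $c_k(r)$ and the Fejér-type kernel $\frac{\sin(\pi N\{k,\a\})}{N^{(d-1)/(2d)}\sin(\pi\{k,\a\})}$, essentially $\cos(2\pi(k,x)+\cdots)$, the $x$-integral kills all cross terms except those with $k = \pm k'$, so that $\|\Delta - \bar\Delta\|^2_{L^2(dx)}$ is bounded by a constant times
\begin{equation}
\sum_{|k|^2 \geq N^{2/d}/\eps} |c_k(r)|^2 \,\frac{1}{N^{\frac{d-1}{d}}}\left(\frac{\sin(\pi N\{k,\a\})}{\sin(\pi\{k,\a\})}\right)^2 .
\end{equation}

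Next I would use the asymptotics from Section \ref{convex}: from \eqref{fourier.strictly.convex.symmetric} we have $|c_k(r)| \leq C |k|^{-\frac{d+1}{2}}$ uniformly in $r$ (the curvature factor $K^{-1/2}$ is bounded since $\cC$ is strictly convex and analytic, and the error term is even smaller). Hence the above is at most
\begin{equation}
\frac{C}{N^{\frac{d-1}{d}}} \sum_{|k|^2 \geq N^{2/d}/\eps} \frac{1}{|k|^{d+1}}\left(\frac{\sin(\pi N\{k,\a\})}{\sin(\pi\{k,\a\})}\right)^2 .
\end{equation}
Now I would integrate in $\a$ as well. The standard bound $\int_{\T^d} \left(\frac{\sin(\pi N\{k,\a\})}{\sin(\pi\{k,\a\})}\right)^2 d\a \leq C N$ holds for each fixed nonzero $k$ (this is the $L^1$ norm of the Fejér kernel in the variable $\{k,\a\}$, which for primitive $k$ — and more generally after accounting for $\gcd$ — equidistributes; one reduces to a one-dimensional integral $\int_0^1 \frac{\sin^2(\pi N t)}{\sin^2(\pi t)}dt = N$). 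This gives, after integrating in $\a$,
\begin{equation}
\|\Delta - \bar\Delta\|_2^2 \leq \frac{CN}{N^{\frac{d-1}{d}}} \sum_{|k|^2 \geq N^{2/d}/\eps} \frac{1}{|k|^{d+1}} \leq \frac{C N^{1/d}}{N^{1/d}\eps^{-1/2}} = C\eps^{1/2},
\end{equation}
using that $\sum_{|k| \geq R} |k|^{-(d+1)} \leq C R^{-1}$ and here $R = N^{1/d}\eps^{-1/2}$, so $R^{-1} = \eps^{1/2} N^{-1/d}$; combined with the prefactor $N^{1/d}$ this yields $C\eps^{1/2}$. Taking square roots gives the claimed $\|\Delta - \bar\Delta\|_2 \leq C\eps^{1/4}$, and since this bound is uniform in $r$ it controls the $\|\cdot\|_2$ norm as defined.

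\textbf{Main obstacle.} The delicate point is the $\a$-integration of the squared Dirichlet/Fejér kernel $\left(\frac{\sin(\pi N\{k,\a\})}{\sin(\pi\{k,\a\})}\right)^2$ for a general (non-primitive) $k \in \Z^d$: one must check that the bound $O(N)$ survives uniformly in $k$, which requires decomposing $k = g\tilde k$ with $\tilde k$ primitive and $g = \gcd$, changing variables so that $\{k,\a\} = \{\tilde k, g\a\}$, and observing that the image measure is still comparable to Lebesgue on $\T^1$ after the linear map $\a \mapsto \{\tilde k,\a\}$. One also needs to handle the case $\{k,\a\}$ very close to $0$, where $\frac{\sin(\pi N\{k,\a\})}{\sin(\pi\{k,\a\})} \approx N$, but the measure of that set is $O(1/N)$, contributing $O(N)$ to the integral — exactly as needed. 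These are the one-dimensional estimates underlying Kesten's work referenced in Section \ref{ScNonRes}, so I would cite \cite{K1} for the kernel bounds and keep the presentation brief, focusing the write-up on the orthogonality step and the bookkeeping of exponents.
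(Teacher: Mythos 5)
Your argument is correct and is exactly the paper's proof: integrate out $x$ by orthogonality, bound the Dirichlet-kernel integral over $\a$ by $N$, use $|c_k|=\cO(|k|^{-(d+1)/2})$, and sum the tail $\sum_{|k|\geq R}|k|^{-(d+1)}\lesssim R^{-1}$ with $R=N^{1/d}\eps^{-1/2}$. The ``obstacle'' you flag about non-primitive $k$ is not one: for any nonzero $k\in\Z^d$ the map $\a\mapsto (k,\a)\bmod 1$ pushes Lebesgue measure on $\T^d$ forward to Lebesgue measure on $\T^1$, so $\int_{\T^d}\bigl(\sin(\pi N(k,\a))/\sin(\pi(k,\a))\bigr)^2\,d\a = N$ exactly, with no gcd bookkeeping required.
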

\begin{proof} We have that 
$$\int_{\T^d} {\left(\frac{\sin(\pi N (k, \a))}{\sin(\pi (k, \a))}\right)}^2 d\a \leq N. $$
Since $|d_r(k)|=\cO(|k|^{-\frac{d+1}{2}})$ we get that 
$${\| \Delta-\bar{\Delta}\|}_2^2 \leq  C N \frac{1}{{N^{\frac{d-1}{d}}}} \sum_{|k|^2\geq \frac{N^{\frac{2}{d}}}{\eps}}  \frac{1}{|k|^{d+1}} 
\leq C \sqrt{\eps}. \qedhere $$
\end{proof}

\subsection{} 

 Let  $$S(N,\a)=\left\{ k \in \Z^d-\{0\} : 0<|k|^2<\frac{N^{\frac{2}{d}}}{\eps}  ; |k|^{\frac{d+1}{2}} |\{k,\a\}| < \frac{1}{\eps^{\frac{d}{4}} N^{\frac{d-1}{2d}} } \right\}$$ 
\begin{equation}
\tD(r,\a,x,N) =  \sum_{k \in S(N,\a)} f(r,\a,x,N,k). 
\end{equation}

We have 
\begin{lemma}
\begin{equation} \label{bsup1} {{\| \Delta-\tD\|}_2} \leq C \eps^{1/8} \end{equation}
\end{lemma}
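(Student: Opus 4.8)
The plan is to estimate the $L^2$-distance between $\bar\Delta$ and $\tD$, i.e. to show that the terms we threw away — those $k$ with $0<|k|^2<N^{2/d}/\eps$ but with $|k|^{\frac{d+1}{2}}|\{k,\a\}| \geq \eps^{-d/4} N^{-\frac{d-1}{2d}}$ — contribute at most $C\eps^{1/8}$ to the norm, and then combine this with the previous lemma via the triangle inequality. The starting point is the elementary bound $|\sin(\pi N\{k,\a\})/\sin(\pi\{k,\a\})| \leq 1/|\{k,\a\}|$ together with $|\sin(\pi N\{k,\a\})/\sin(\pi\{k,\a\})|\leq N$, which control the Dirichlet-type kernel in $f(r,\a,x,N,k)$; the cosine factor is bounded by $1$ and $|c_k(r)|=\cO(|k|^{-\frac{d+1}{2}})$. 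Since distinct frequencies $k$ are $L^2$-orthogonal in $x$ (and the cross terms in $\a$ can be handled by the same one-variable integral estimates used in the previous lemma), the square of the norm is, up to constants, $N^{-\frac{d-1}{d}}\sum_k |k|^{-(d+1)} \int_{\T^d} \big(\sin(\pi N\{k,\a\})/\sin(\pi\{k,\a\})\big)^2\, d\a$ restricted to the bad set.

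The key step is to split the $\a$-integral for each fixed $k$ according to the size of $\|\{k,\a\}\|$. On the region where $|\{k,\a\}| < \eps^{-d/4} N^{-\frac{d-1}{2d}} |k|^{-\frac{d+1}{2}}$ the frequency $k$ already belongs to $S(N,\a)$, so it does not appear in $\bar\Delta-\tD$; hence we only integrate over the complementary region where $|\{k,\a\}|$ is bounded below by that quantity. There, using the bound $1/|\{k,\a\}|^2$ for the kernel together with the lower bound on $|\{k,\a\}|$, and using that $\a\mapsto\{k,\a\}$ pushes Lebesgue measure on $\T^d$ to (a multiple of) Lebesgue measure on $(-\tfrac12,\tfrac12]$ for primitive $k$ (and reduces to the primitive case in general), one gets
\begin{equation}
\int_{\{|\{k,\a\}|\geq \delta_k\}} \frac{d\a}{\sin^2(\pi\{k,\a\})} \leq \frac{C}{\delta_k}, \qquad \delta_k = \frac{1}{\eps^{d/4} N^{\frac{d-1}{2d}} |k|^{\frac{d+1}{2}}}.
\end{equation}
Substituting $\delta_k$ gives a contribution $\leq C \eps^{d/4} N^{\frac{d-1}{2d}} |k|^{\frac{d+1}{2}}$ per frequency, so after multiplying by $N^{-\frac{d-1}{d}}|k|^{-(d+1)}$ and summing over $0<|k|^2<N^{2/d}/\eps$ one obtains a bound of the form $C\eps^{d/4} N^{\frac{d-1}{2d}-\frac{d-1}{d}}\sum_{|k|<N^{1/d}/\sqrt\eps}|k|^{-\frac{d+1}{2}}$; estimating the sum by $C (N^{1/d}/\sqrt\eps)^{\frac{d-1}{2}}$ (valid since $\frac{d+1}{2}>d$ fails — one must be careful here, comparing the exponent $\frac{d+1}{2}$ to $d$ and using the volume growth $|k|^{d-1}$ of spheres, which makes the sum of size $\sim (N^{1/d}/\sqrt\eps)^{d - \frac{d+1}{2}} = (N^{1/d}/\sqrt\eps)^{\frac{d-1}{2}}$) yields $C\eps^{d/4 - \frac{d-1}{4}} = C\eps^{1/4}$ for the square of the norm, hence $C\eps^{1/8}$ for the norm itself. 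One then concludes by ${\|\Delta-\tD\|}_2 \leq {\|\Delta-\bar\Delta\|}_2 + {\|\bar\Delta-\tD\|}_2 \leq C\eps^{1/4} + C\eps^{1/8} \leq C\eps^{1/8}$.

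The main obstacle I anticipate is bookkeeping the exponents correctly in the final summation over $k$ — in particular checking that the power of $|k|$ left after combining the kernel estimate, the decay of $c_k$, and the $\a$-measure estimate is genuinely summable-with-the-right-growth against the sphere volume, and that the resulting power of $N$ cancels exactly while the power of $\eps$ comes out positive. A secondary technical point is the reduction to primitive $k$ in the pushforward-of-measure estimate and making sure the $L^2$ cross terms between different frequencies in the $\a$ variable are absorbed into constants (as they were, implicitly, in the proof of the previous lemma); both are routine but need to be stated cleanly. The choice of the exponent $d/4$ in the definition of $S(N,\a)$ is exactly what makes $\eps^{d/4-\frac{d-1}{4}}$ a positive power of $\eps$, so the argument is tight and leaves no slack.
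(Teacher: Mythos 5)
Your proof is correct and follows essentially the same route as the paper: reduce via the previous lemma to bounding $\|\bar\Delta-\tD\|_2^2$ by $C\eps^{1/4}$, use $L^2$-orthogonality in $x$ to decouple frequencies, and then for each fixed $k$ integrate $\{k,\a\}^{-2}$ over the region where $|\{k,\a\}|\geq\delta_k$ (the paper does this via the explicit dyadic layers $B(k,p)$ and you via the direct one-variable estimate $\int_{|t|\geq\delta}t^{-2}\,dt\leq C/\delta$, which are equivalent), arriving at exactly the paper's bound $A_k\leq C\eps^{d/4}N^{\frac{d-1}{2d}}|k|^{-\frac{d+1}{2}}$ and the same final summation over $|k|<N^{1/d}/\sqrt{\eps}$.
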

\begin{proof} By (\ref{bsup}) it is sufficient to show that  ${{\| \bar{\Delta}-\tD\|}_2^2} \leq C \eps^{1/4}$.  
We have 
\begin{equation}
 {\| \bar{\Delta}-\tD\|}_2^2 \leq \frac{C}{N^{\frac{d-1}{d}}} \sum_{|k|^2<\frac{N^{\frac{2}{d}}}{\eps}} A_{k}
\end{equation}
with 
\begin{equation}
A_{k} = \int_{\T^d}  \frac{c_k^2}{ {\{ k,\a}\}^2}   
\chi_{    |k|^{\frac{d+1}{2}}  |\{k,\a\}| \geq \frac{1}{ \eps^{\frac{d}{4}} N^{\frac{d-1}{2d}}   }} d\a
\end{equation}
For $p \geq 1$ we define 
\begin{equation}
B(k,p)= \left\{\a \in \T^d :  \frac{p}{ \eps^{\frac{d}{4}} N^{\frac{d-1}{2d}}   }\leq  |k|^{\frac{d+1}{2}}  |\{k,\a\}| \leq \frac{p+1}{ \eps^{\frac{d}{4}} N^{\frac{d-1}{2d}}   } \right\}.
\end{equation}
Then 
\begin{equation}
|B(k,p)|\leq \frac{1}{|k|^{(d+1)/2}\eps^{d/4} N^{\frac{d-1}{2d}}}.
\end{equation}
Thus 
\begin{equation}
A_k \leq    \sum_{p\geq 1} \frac{\eps^{d/4} c_k^2 |k|^{(d+1)/2}N^{\frac{d-1}{2d}}} {p^2 }
\leq C \eps^{d/4} N^{\frac{d-1}{2d}} c_k^2 |k|^{(d+1)/2}.
\end{equation}
Summing over $k$ and using \eqref{fourier.strictly.convex.symmetric} we get that 
\begin{equation}
\label{SumAK}
\sum_{|k|^2<\frac{N^{\frac{2}{d}}}{\eps}} A_{k} \leq C \eps^{1/4} N^{\frac{d-1}{d}}
\end{equation}
and the  claim follows.
\end{proof}

\subsection{} Let  $$\hat{S}(N,\a)=\left\{ k  \in \Z^d-\{0\}  : \eps^{\frac{d+4}{d-1}} N^{\frac{2}{d}} <|k|^2<\frac{N^{\frac{2}{d}}}{\eps}  ; |k|^{\frac{d+1}{2}} |\{k,\a\}| < \frac{1}{\eps^{\frac{d}{4}} N^{\frac{d-1}{2d}} } \right\}.$$ 
Define 
\begin{equation}
\hat{\Delta}(r,\a,x,N) =  \sum_{k \in \hat{S}(N,\a)}f(r,\a,x,N,k).
\end{equation}

Let
\begin{equation}
E_{k,N}= \left\{\a\in \T^d:  |k|^{\frac{d+1}{2}} |\{k,\a\}| 
< \frac{1}{\eps^{\frac{d}{4}} N^{\frac{d-1}{2d}} } \right\}
\end{equation}
and 
\begin{equation}
E_N=\bigcup_{|k|^2 <\eps^{\frac{d+4}{d-1}} N^{\frac{2}{d}}} E_{k,N}.
\end{equation}
We have that $|E_N|\leq C \eps$. On the other hand, since $\hat{\Delta}(r,\a,x,N) =\tD(r,\a,x,N)$ for $\a \notin E_{N}$ we have from (\ref{bsup1}) 
\begin{equation} \label{binf} {{\| \Delta-{\hat{\Delta}}\|}_{L^2((\T^d-E_N)\times \T^d)}} \leq C \eps^{1/8}. \end{equation}

\subsection{}ÊWe can now get rid of the error terms in the Fourier expansion of the characteristic functions of the convex sets. Introduce 
$$\check{f}(r,\a,x,N,k)= d_k(r) \frac{\cos(2\pi (k,x) +\pi(N-1)\{k,\a\}) \sin (\pi N\{k,\alpha\})}{N^{\frac{d-1}{2d}} \sin (\pi \{Êk,\alpha \}) }$$
and let 
\begin{equation}
\label{HDelta}
\check{\Delta}(r,\a,x,N) =  \sum_{k \in \hat{S}(N,\a)}\check{f}(r,\a,x,N,k).
\end{equation}
Since $|c_k-d_k|=\cO(|k|^{-(d+3)/2})$
\begin{equation} {\| \check{\Delta}-{\hat{\Delta}}\|}_{2}^2 \leq \sum_{\eps^{\frac{d+4}{d-1}} N^{\frac{2}{d}} <|k|^2<\frac{N^{\frac{2}{d}}}{\eps}} \frac{C}{|k|^{d+3}} \frac{N}{N^{\frac{d-1}{d}}} 
= \cO\left(N^{-\frac{2}{d}}\right). \end{equation}
Hence we can replace $\hat{\Delta}$ with $\check{\Delta}$.

\subsection{} \label{SS2}
Observe that the sum in \eqref{HDelta} is limited to large $k$ and small $|\{k,\a\}|$. Define 
$$g(r,\a,x,N,k)= d_k(r) \frac{\cos(2\pi (k,x) +\pi(N-1)\{k,\a\}) 
\sin (\pi N\{k,\alpha\})}{\pi N^{\frac{d-1}{2d}} \{k,\alpha\}}. $$

Thus we have to prove that
\begin{equation} \lim_{N \to \infty}   \lambda \{ (\a,x,r) \in \T^{2d} \times [a, b]
  \  | \  \Delta'(r,\a,x,N) \leq z\} = \cD(z)\end{equation}
where
\begin{equation}
\label{OnlyRes}
\Delta' =  \sum_{k\in U(N,\a)}   g(r,\a,x,N,k)
\end{equation}
and $U(N,\a)$ is any subset of $\Z^d$ that contains $\hat{S}(N,\a)$.

\section{Geometry of the space of lattices}
\label{ScGeom}

\subsection{} 
\label{sec.shortestvectors} Following \cite{Da}, Section 2,
we give now an interpretation of the set ${ \hat{S}(N,\a)}$, as well as the contribution to $\Delta'$ of each  $g(r,\a,x,N,k)$ for $ {k\in \hat{S}(N,\a)}$, in terms of short vectors in lattices in $M={\rm SL}(d+1,\R)/{\rm SL}(d+1,\Z)$.

Let 
$$ g_T=\left(\begin{array}{cccc} e^{-T/d} & 0 & \ldots & 0 \cr
                                                            0 & e^{-T/d} & 0 & \ldots  \cr
                                                            & & & \cr
                                                            0 & \ldots & e^{-T/d}  & 0 \cr 
                                                            0 & \ldots & 0 & e^T  \cr
                                                             \end{array}\right), \quad
\Lambda_\alpha=\left(\begin{array}{cccc} 1 & 0 & 0 & \ldots \cr
                                                            0 & 1 & 0 & \ldots  \cr
                                                            \cr
						   \a_1 & \ldots & \a_d & 1 \cr
 \end{array}\right) . $$
Consider the lattice $L(N,\a)=g_{\ln N} \Lambda_\alpha \Z^{d+1}.$ For each $k =(k_1, \ldots,k_d) \in \Z^d$ we associate the vector ${\bf k}=(k_1, \ldots,k_d, {k_{d+1}}) \in \Z^{d+1}$ 
where ${k_{d+1}}=k_{d+1}(k,\a)$ is the unique integer such that $-\frac{1}{2}<(k,\alpha)+{k_{d+1}}\leq \frac{1}{2}.$ 
We then denote 
\begin{equation}
(X_1,\ldots,X_d,Z) :=(k_1/N^{1/d},\ldots,k_d/N^{1/d},N\{k,\a\})=g_{\ln N} \Lambda_\alpha {\bf k}
\end{equation}

We have that $k \in { \hat{S}(N,\a)}$ if and only if  $g_{\ln N} \Lambda_\alpha {\bf k}$ satisfies 
\begin{equation} \label{cusp}  \eps^{\frac{d+4}{d-1}} <X_1^2+\ldots+X_d^2<\frac{1}{\eps}, \quad 
\left|Z\right|<\frac{1}{(X^2+\ldots+X_d^2)^{\frac{d+1}{4}} \eps^{\frac{d}{4}}}.  \end{equation}

Let $e_i(N,\a)$ be the shortest vectors of $L(N,\a)$ as defined in Section~\ref{s1}.

\begin{lemma} 
For each $\eps>0$ there exists $M(\eps)>0$ such that if $\alpha\not\in E_N$ then
$k\in \hat{S}(N,\a)$ implies that 
$$g_{\ln N} \Lambda_\alpha {\bf k}=m_1 e_1(N,\a)+\ldots+m_{d+1} e_{d+1}(N,\a)$$
for some unique $(m_1,\ldots,m_{d+1})\in \Z^{d+1}-(0,\ldots,0)$, $\|m\|\leq M(\eps)$.

If $\eps>0$ is fixed and $N$ is sufficiently large, it also holds that if $\alpha\not\in E_N$ then
for each $\|m\|\leq M(\eps)$, 
there exists a unique $k \in \Z^d$ 
such that 
$$g_{\ln N} \Lambda_\alpha {\bf k}=(m,e(N,\a))=m_1 e_1(N,\a)+\ldots+m_{d+1} e_{d+1}(N,\a). $$
\end{lemma}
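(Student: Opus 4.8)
The plan is to use the definition of the short vectors $e_1(N,\alpha),\ldots,e_{d+1}(N,\alpha)$ of the lattice $L(N,\alpha)=g_{\ln N}\Lambda_\alpha\Z^{d+1}$ together with the fact that, for $\alpha\notin E_N$, this lattice is \emph{not} in the cusp of $M$: indeed by definition of $E_N$ the excluded set $E_N$ exactly removes those $\alpha$ for which $L(N,\alpha)$ has a vector shorter than $\eps^{(d+4)/(2(d-1))}$ coming from some $k$ with $|k|^2<\eps^{(d+4)/(d-1)}N^{2/d}$, and a short vector of $L(N,\alpha)$ is necessarily of the form $g_{\ln N}\Lambda_\alpha\mathbf{k}$ for such a $k$ (since the last coordinate of $g_{\ln N}\Lambda_\alpha\mathbf{k}$ is $N\{k,\alpha\}$, which is $\cO(N)$, while $e^{T}=N$, forcing the integer vector to lie in the ``short $k$'' range). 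Hence for $\alpha\notin E_N$ the first minimum $\lambda_1(L(N,\alpha))$ is bounded below by a constant $c(\eps)>0$, and since $L(N,\alpha)$ is unimodular all successive minima lie in $[c(\eps),C(\eps)]$ for constants depending only on $\eps$ and $d$; consequently the reduced basis vectors $e_i(N,\alpha)$ all have norm in $[c(\eps),C(\eps)]$.

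First I would establish the forward direction. Take $k\in\hat S(N,\alpha)$. The vector $w:=g_{\ln N}\Lambda_\alpha\mathbf{k}=(X_1,\ldots,X_d,Z)$ satisfies \eqref{cusp}, so $\|w\|\le C\eps^{-1/2}$ (the $Z$-coordinate being controlled by the $X$'s via the second inequality in \eqref{cusp}). Writing $w=\sum_i m_i e_i(N,\alpha)$ with $(m_1,\ldots,m_{d+1})\in\Z^{d+1}$ — which is possible and unique because the $e_i$ generate $L(N,\alpha)$ (cf. \cite{Ar}, Lemma 49.3) and $w\in L(N,\alpha)$ — I would bound $\|m\|$: since the $e_i(N,\alpha)$ form a Minkowski/Korkine–Zolotarev-type reduced basis with all norms $\ge c(\eps)$ and the Gram matrix of $\{e_i\}$ has determinant $1$ and entries bounded by $C(\eps)$, the inverse Gram matrix has bounded entries, and therefore $\|m\|\le C(\eps)\|w\|\le M(\eps)$. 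Also $w\ne 0$ because $k\ne 0$ and $\Lambda_\alpha$, $g_{\ln N}$ are invertible, hence $m\ne 0$.

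For the reverse direction, fix $\eps$ and let $N$ be large. Given $m$ with $\|m\|\le M(\eps)$, set $w=(m,e(N,\alpha))=\sum_i m_i e_i(N,\alpha)\in L(N,\alpha)$, so $w=g_{\ln N}\Lambda_\alpha\mathbf{j}$ for a unique $\mathbf{j}=(j_1,\ldots,j_{d+1})\in\Z^{d+1}$, and $\|w\|\le M(\eps)C(\eps)$. Then $j_s=X_s N^{1/d}$ for $s\le d$ and $j_{d+1}$ is determined by $j_1,\ldots,j_d$ and $\alpha$ through $j_{d+1}=j_{d+1}(j,\alpha)$ — here one must check that $w$, being a bounded element of $L(N,\alpha)$, forces $|j_{d+1}+(j,\alpha)|=|Z|/N\le 1/2$ for $N$ large, so that $j_{d+1}$ is indeed the integer occurring in the definition of $\mathbf{j}$ from $j=(j_1,\ldots,j_d)$, i.e. $\mathbf{j}$ is of the form $\mathbf{k}$ for $k=j$. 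Uniqueness of such a $k$ follows because $k\mapsto g_{\ln N}\Lambda_\alpha\mathbf{k}$ is injective. (One does \emph{not} claim $k\in\hat S(N,\alpha)$ here, only that $k\in\Z^d$ exists and is unique, which is what the statement asserts.)

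The main obstacle is making the two-sided bound on the reduced basis vectors $e_i(N,\alpha)$ precise and uniform: one needs that $\alpha\notin E_N$ genuinely prevents $L(N,\alpha)$ from being in the cusp, which requires the observation that \emph{every} short vector of $L(N,\alpha)$ arises from a short $k$ (so that $E_N$, defined only in terms of short $k$'s, really captures all ways the lattice can degenerate), and then transferring the lower bound $\lambda_1\ge c(\eps)$ to an upper bound on all $\lambda_i$ via Minkowski's second theorem. The rest — controlling $\|m\|$ via the bounded reduced basis and checking the $|Z|\le N/2$ normalization in the reverse direction — is routine linear algebra once these uniform bounds are in hand.
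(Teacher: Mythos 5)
Your proposal is correct and follows essentially the same route as the paper: you observe that $\alpha\notin E_N$ rules out short vectors (hence, by Mahler, keeps $L(N,\alpha)$ in a fixed compact set), derive uniform two-sided bounds on the reduced basis vectors $e_i(N,\alpha)$ and thence on $\|m\|$, and for the converse you use the boundedness of $(m,e)$ to force the $(d+1)$-st integer coordinate to coincide with the nearest-integer choice $k_{d+1}(k,\alpha)$ once $N$ is large. One small inaccuracy: from \eqref{cusp} the $Z$-coordinate is only bounded by $\eps^{-d/4}|X|^{-(d+1)/2}\le \eps^{-d/4-(d+4)(d+1)/(4(d-1))}$, so $\|w\|$ is bounded by a larger negative power of $\eps$ than $\eps^{-1/2}$, but this does not affect the argument since any $R(\eps)$ polynomial in $1/\eps$ suffices.
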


We denote $U(N,\a,\eps)$ the set of $k\in \Z^{d}$ that correspond to the set of $m \in \Z^{d+1}, \|m\|\leq M(\eps)$. 

\begin{proof} It is clear from (\ref{cusp}) that 
$k \in \hat{S}(N,\alpha)$ implies that $g_{\ln N} \Lambda_\alpha {\bf k}$ is shorter than $R(\eps)=\eps^{-\frac{(d+4)(d+1)}{4(d-1)}-1}.$ Since $e_1(L)\dots e_{d+1}(L)$ is a basis in $\R^{d+1}$ we have that the norms 
$||x||$ and $||\sum_j x_j e_j(L)||$ are equivalent.
Accordingly for each $L$ there exists $M(L)$ such that
$\|m_1 e_1(L)+\ldots+m_{d+1} e_{d+1}(L)\|\geq R(\eps)$ provided that $||m||\geq M(L).$ We claim that $M(L)$ can
be chosen uniformly for $L$ of the form $L(N, \a)$ with $\a\not\in E_N.$ To this end it suffices to show that the
set 
\begin{equation}
\label{LnotinEN}
\{L(N,\a), \a\not\in E_N\}
\end{equation}
is precompact.
By definition of $E_N$, if   $X_1^2+\ldots+X_d^2<\eps^{\frac{d+4}{d-1}} $, then $N|\{k,\a\}|$ is large, hence $|N((k,\a)+k_{d+1})|$ is {\it a fortiori} large for any $k_{d+1} \in \Z^d$. This implies that there exists $\delta(\eps)$ such that 
if $\alpha\not\in E_N$ then all vectors in $L$ are longer than $\delta.$ 
Therefore the precompactness of \eqref{LnotinEN} follows by
Mahler compactness criterion (\cite{R}, Corollary 10.9). 

We now prove the  second statement. We have that $(m,e(N,\a))=g_{\ln N} \Lambda_\alpha \bar{k}$ for some unique $\bar{k} \in \Z^{d+1}$ and we just have to see that $\bar{k}={\bf k}(k)$ for 
$k=(\bar{k}_1,\ldots,\bar{k}_d)$. Since for $\|m\|\leq M(\eps)$ we have that $\|(m,e)\| \ll N$ 
(by precompacity) we necessarily have $\bar{k}_{d+1}={k_{d+1}}(k,\a)$, that is $\bar{k}={\bf k}(k)$ as required. 
\end{proof}

\subsection{} 
\label{SSContr}

For $m \in \Z^{d+1}$ and $\a \in \T^d$,  we write 
\begin{equation}
\label{EqDefX}
(m,e(N,\a))=(X_{m,1},\ldots,X_{m,d},Z_m) 
\end{equation}
and define $X_m=(X_{m,1},\ldots,X_{m,d})$ and $R_m=\|X_m\|.$
Introduce
\begin{equation*} h(r,\a,x,N,m)= \frac{d_r(N,m)  \cos(2\pi N^{1/d} (X_m,x) +\frac{\pi (N-1)}{N}Z_m) \sin (\pi Z_m)}{ {R_m^{\frac{d+1}{2}}}  Z_m} \end{equation*}
with 
$$ d_r(N,m)= \frac{1}{\pi^2}  K^{-\frac{1}{2}}(X_m/R_m)\sin(2\pi(rN^{1/d}P(X_m)-(d-1)/8))$$

From Section \ref{sec.shortestvectors} we see that for $\a\not\in E_N$ 
\begin{equation}
\sum_{m\in \Z^{d+1}-\{0\}, \|m\|\leq M(\eps)}   h(r,\a,x,N,m) =  \sum_{k\in U(N,\a,\eps)}   g(r,\a,x,N,k)
\end{equation}
where $U(N, \alpha, \eps)\supset S(N, \alpha).$

Therefore Section \ref{SS2} allows to shift our attention to the distribution of
$\sum_{m\in \Z^{d+1}-\{0\}, \|m\|\leq M(\eps)}   h(r,\a,x,N,m)$ that is equivalent to the distribution of $\Delta'$ that we are studying. 

The idea now is that the variables  $r N^{1/d} P(X_m)  \ {\rm mod \ }  [1]$, as $r$ is random in an interval,  will behave as uniformly distributed random variables on the circle, provided that only prime vecotrs $m$ are considered. 
We need however to account for the contribution of the multiples of the primitive vectors. Introduce 
\begin{multline}  \label{qq} q(r,\a,x,N,m,p)= \\  \frac{d_r(N,m,p)  \cos\left(2\pi p (m,{\gamma}(\a,x,N)\right) +p\frac{\pi (N-1)}{N}Z_m) \sin (\pi  p Z_m)}{ {R_m^{\frac{d+1}{2}}} Z_m p^{\frac{d+3}{2}}} \end{multline} 
where
\begin{equation}  
d_r(N,m,p)=\frac{1}{\pi^2} K^{-\frac{1}{2}}(X_m/R_m)\sin(2\pi(rN^{1/d}pP(X_m)-(d-1)/8)), 
\end{equation}
\begin{equation}  
\label{EqDefGamma}
{\gamma}(\a,x,N)=(\gamma_1(\a,x,N),\ldots,\gamma_{d+1}(\a,x,N)),
\end{equation} 
\begin{equation}  
\gamma_j(\a,N,x)=N^{1/d} (e_{j,1}(N,\a)x_1+\ldots+e_{j,d}(N,\a)x_d). 
\end{equation}

Recall the definition of $\cZ$ in Section \ref{s1}.
Let $\cZ_\eps= \{m\in \cZ \ : \ \|m\|\leq M(\eps)\}.$  
Summing over the multiples of all $m\in \cZ_\eps$ 
we end up with the following  
\begin{prop} \label{finalreduction} If as $\a,x,r$ are  uniformly distributed 
on $\T^d \times \T^d \times [a,b]$, the variable 
$$2 \sum_{p=1}^\infty \sum_{m\in \cZ_\eps}  q(r,\a,x,N,m,p)$$ 
converges in distribution as $N\to \infty$ and then $\eps \to 0$ 
to some law $\cD_{\mathcal C}(z)$ 
then  the limit 
(\ref{limit2}) of Theorem \ref{main.limit} holds with the same limit law  $\cD_{\mathcal C}(z)$. 
\end{prop}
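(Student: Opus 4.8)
The plan is to bridge the gap between the object already reduced to in Section~\ref{ScNonRes}--\ref{ScGeom}, namely $\Delta'(r,\a,x,N)$, and the infinite series $2\sum_{p\geq 1}\sum_{m\in\cZ_\eps} q$. By the computations of Section~\ref{SS2} and subsection~\ref{SSContr}, the distribution of $D_\cC/(r^{(d-1)/2}N^{(d-1)/(2d)})$ differs from the distribution of $\sum_{m\in\Z^{d+1}-\{0\},\ \|m\|\leq M(\eps)} h(r,\a,x,N,m)$ by an error that is $O(\eps^{1/8})$ in $L^2$ off a set $E_N$ of measure $O(\eps)$. Hence, to conclude, it suffices to show that grouping each primitive $m\in\cZ_\eps$ with its integer multiples $pm$ and reindexing turns that finite sum into $2\sum_{p\geq 1}\sum_{m\in\cZ_\eps} q(r,\a,x,N,m,p)$ up to a further error that is negligible in $L^2$ as $N\to\infty$, $\eps\to 0$. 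The factor $2$ and the form of $q$ come from pairing $m$ with $-m$: a vector $\ell\in\Z^{d+1}-\{0\}$ with $\|\ell\|\leq M(\eps)$ is uniquely $\pm p m$ for a primitive $m\in\cZ$ and $p\geq 1$, and $h$ is even under $\ell\mapsto -\ell$ (the cosine and the $\sin(\pi Z)/Z$ factor are both even), so $\sum_{\|\ell\|\leq M(\eps)} h = 2\sum_{p\geq 1}\sum_{m\in\cZ,\ p\|m\|\leq M(\eps)} h(\cdot,pm)$. Rewriting $h(\cdot,pm)$ in terms of $m$ and $p$: since $e_j(N,\a)$ is a basis, $(pm,e)=p(m,e)$, so $X_{pm}=pX_m$, $Z_{pm}=pZ_m$, $R_{pm}=pR_m$, $P(X_{pm})=pP(X_m)$, and $(X_{pm},x)=p(X_m,x)$; substituting and using $K^{-1/2}(X_{pm}/R_{pm})=K^{-1/2}(X_m/R_m)$ reproduces exactly $q(r,\a,x,N,m,p)$ with the $p^{-(d+3)/2}$ and the $\gamma$-variables as defined.

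The remaining points to check are: first, that truncating the inner $p$-sum at $p\|m\|\leq M(\eps)$ versus summing $p$ to $\infty$ introduces an $L^2$-error that vanishes with $\eps$; this follows because $|q|\leq C R_m^{-(d+1)/2}|Z_m|^{-1}|\sin(\pi Z_m)|\,p^{-(d+3)/2}$ with $d+3>2$, so the tail in $p$ is controlled by $\sum_{p>M(\eps)/\|m\|} p^{-(d+3)/2}$, and combined with the geometric constraints \eqref{cusp} on $X_m,Z_m$ and Mahler precompactness of $\{L(N,\a):\a\notin E_N\}$ this is $o(1)$ as $\eps\to 0$, uniformly in $N$. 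Second, that the restriction to primitive $m$ with $\|m\|\leq M(\eps)$ — i.e. replacing $\cZ_\eps$'s multiples by the full index set of the original finite sum — is exactly the reindexing just described, with no loss. Third, one must note that the passage from $h$ to $q$ also resums the contribution of vectors $\ell$ that are multiples of a primitive $m\in\cZ_\eps$ but whose own norm $\|\ell\|=p\|m\|$ is still $\leq M(\eps)$ — these are precisely the terms $q(\cdot,m,p)$ with $p\|m\|\leq M(\eps)$ — and the claim of the proposition is that extending to all $p\geq 1$ is harmless by the tail bound.

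Putting the pieces together, the proof is a matter of chaining the triangle inequality through the chain of approximations $\Delta \approx \bar\Delta \approx \tD \approx \hat\Delta \approx \check\Delta \approx \Delta' \approx \sum h \approx 2\sum_p\sum_m q$, each step already established or reduced above with an error $\leq C\eps^{\kappa}$ in $L^2$ off $E_N$ with $|E_N|\leq C\eps$, and then invoking the standard fact that if $Y_N^{(\eps)}\Rightarrow \cD_\cC$ as $N\to\infty$ then $\eps\to 0$, and $\|Y_N - Y_N^{(\eps)}\|_{L^2(X\setminus E_N)}\leq C\eps^\kappa$ with $\lambda(E_N)\to 0$ as $\eps\to 0$ uniformly, then $Y_N\Rightarrow\cD_\cC$ as well. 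Concretely: for fixed $z$ a continuity point of $\cD_\cC$ and fixed $\eps$, $\limsup_N|\lambda(Y_N\leq z)-\lambda(Y_N^{(\eps)}\leq z)|$ is bounded, via Chebyshev on the $L^2$ distance plus the measure of $E_N$, by $C\eps^\kappa/\delta^2 + C\eps + (\text{oscillation of }\cD_\cC\text{ over }[z-\delta,z+\delta])$ for any $\delta>0$; letting $N\to\infty$, then $\eps\to 0$, then $\delta\to 0$ gives $\lambda(Y_N\leq z)\to\cD_\cC(z)$.

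The main obstacle is making the uniformity in $N$ of the $\eps$-dependent errors genuinely rigorous, in particular the tail-in-$p$ estimate and the fact that $M(\eps)$ can be chosen independent of $N$ for $\a\notin E_N$: this rests on the Mahler precompactness argument in the proof of Lemma (the set $\{L(N,\a):\a\notin E_N\}$ has all vectors longer than $\delta(\eps)$), which we may take as given. Once uniformity is secured, everything else is bookkeeping; the genuinely new analytic input — that the $rN^{1/d}pP(X_m)$ phases equidistribute and that numerators and denominators decouple — is deferred to Sections~\ref{ScGeom} and~\ref{sec.oscill} and is not needed for this reduction.
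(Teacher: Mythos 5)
Your reconstruction matches the paper's implicit argument: the paper does not give a separate proof of Proposition~\ref{finalreduction} but rather presents it as the summary of the reductions carried out in Sections~\ref{ScNonRes}--\ref{ScGeom}, and you correctly chain those $L^2$ approximations (off $E_N$), carry out the reindexing $\sum_{\|\ell\|\leq M(\eps)}h(\ell)=2\sum_{m\in\cZ_\eps}\sum_{p\|m\|\leq M(\eps)}q(m,p)$ using evenness of $h$ in the symmetric case and the homogeneity identities $X_{pm}=pX_m$, $Z_{pm}=pZ_m$, $R_{pm}=pR_m$, $P(X_{pm})=pP(X_m)$, and close with the standard $L^2$-to-distribution transfer. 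This is the same approach as the paper, filled in with the bookkeeping the paper leaves implicit.
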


To proceed with the proof of  Theorem \ref{main.limit} we thus need to see how the terms  $X_m, R_m, Z_m,\gamma(\a,x,N)$ and $r  N^{1/d} P(X_m)$   behave as $\a,x,r$ are random and $N\to \infty$.

\subsection{Uniform distribution of long pieces of horocycles} 
\label{SSEq}
Observe that $\Lambda_\a$ is a piece of unstable manifold of $g_T.$ We shall use the fact
that the images of unstable leaves became uniformly distributed in $M.$ 
The statement below is a special case of \cite{MS1}, Theorem 5.8.
Related results are proven in several papers, see, in particular \cite{EM, KM, Sh}.

\begin{prop} Denote by $\mu$  the Haar measure on $M.$ If $\Phi:(\R^{d+1})^{d+1} \times \R^d \to \R$ is a bounded continuous function then
\begin{multline}
\label{EquidWu2}
\lim_{N \to \infty} \int_{\T^d} \Phi\left(e_1(L(N,\a)), \dots, e_{d+1}(L(N,\a)),\a\right) d\a = \\ \int_{M \times \T^d} \Phi(e_1(L), \dots, e_{d+1}(L),\a) d\mu(L) d\a \end{multline}

\end{prop}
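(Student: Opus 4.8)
The plan is to recognize the left-hand side of \eqref{EquidWu2} as an instance of equidistribution of expanding horospherical translates, for which the cited result of \cite{MS1} applies to continuous test functions, and then to upgrade to functions of the special form $\Phi(e_1(L),\dots,e_{d+1}(L),\a)$ by an approximation argument that handles the locus where the successive-minima basis is not uniquely defined.

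First I would set up the geometric picture. The subgroup $\{\Lambda_\a : \a\in\R^d\}$ is the full expanding horospherical subgroup for $g_T$ (with $T>0$), and $\a\mapsto\Lambda_\a\Z^{d+1}$ factors through $\R^d/\Z^d=\T^d$ (since $\Lambda_\a^{-1}\Lambda_{\a+n}\in {\rm SL}(d+1,\Z)$ for $n\in\Z^d$), parametrizing the compact unstable leaf $\mathcal W=\{\Lambda_\a\Z^{d+1}:\a\in\T^d\}$ through the standard lattice; normalized Lebesgue measure $d\a$ on $\T^d$ corresponds to the $\Lambda$-invariant probability measure on $\mathcal W$. Hence, writing $\Psi_N$ for the pushforward of $d\a$ under $\a\mapsto(L(N,\a),\a)=(g_{\ln N}\Lambda_\a\Z^{d+1},\a)\in M\times\T^d$, the left-hand side of \eqref{EquidWu2} is exactly $\int_{M\times\T^d}\Phi(e_1(L),\dots,e_{d+1}(L),\a)\,d\Psi_N$.

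Second, I would invoke \cite{MS1}, Theorem 5.8: as $N\to\infty$, $\Psi_N$ converges weakly to $\mu\times{\rm Leb}_{\T^d}$, i.e. $\int F\,d\Psi_N\to\int_{M\times\T^d} F\,d\mu\,d\a$ for every bounded continuous $F$ on $M\times\T^d$ (the extension from compactly supported to merely bounded continuous test functions is justified by non-escape of mass for expanding horospherical translates, which lies in the same circle of results, together with the fact that the limit is a probability measure). This settles the claim for continuous test functions; the only remaining point is that the assignment $L\mapsto(e_1(L),\dots,e_{d+1}(L))$ — hence $F(L,\a):=\Phi(e_1(L),\dots,e_{d+1}(L),\a)$ — is not continuous on all of $M$. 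Here I would run the standard sandwich argument: let $\cB\subset M$ be the set of lattices for which the inductive choice of $e_1,\dots,e_{d+1}$ from Section~\ref{sn1} fails to be unique (a tie occurs at some stage); $\cB$ is closed and $\mu(\cB)=0$, and $F$ is bounded and continuous on $(M\setminus\cB)\times\T^d$. Given $\delta>0$, use outer regularity of $\mu$ to pick an open set $\cO\supset\cB$ with $\mu(\cO)<\delta$, and (Urysohn) bounded continuous functions $F^\pm$ on $M\times\T^d$ with $F^-\le F\le F^+$ everywhere, $F^-=F=F^+$ off $\cO\times\T^d$, and $\|F^\pm\|_\infty\le\|\Phi\|_\infty$. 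Applying the previous step to $F^\pm$ and to a continuous function dominating $\chi_{\cO}$ gives $\limsup_N\bigl|\int F\,d\Psi_N-\int F\,d\mu\,d\a\bigr|\le 4\|\Phi\|_\infty\,\mu(\cO)\le 4\|\Phi\|_\infty\,\delta$, and letting $\delta\to0$ yields \eqref{EquidWu2}.

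The genuinely substantive input is the equidistribution theorem of \cite{MS1}, which we are entitled to quote; within the present proof the only real work is the approximation step, and in particular the assertion that the tie-locus $\cB$ is closed of Haar measure zero — this is classical reduction theory (cf. \cite{Ar}), but it underpins the whole argument and should be stated carefully.
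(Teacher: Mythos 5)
Your proof is essentially the paper's proof: both treat the statement as an instance of equidistribution of expanding horospherical translates and invoke \cite{MS1}, Theorem 5.8 as the engine (the paper in fact gives no argument at all beyond that single citation and the remark that $\Lambda_\a$ parametrizes the unstable leaf of $g_T$). Where you go further is in making explicit the reduction from a general bounded continuous test function on $M\times\T^d$ — the setting of \cite{MS1} — to the particular function $F(L,\a)=\Phi(e_1(L),\dots,e_{d+1}(L),\a)$, which is \emph{not} continuous on all of $M$ because the Minkowski-type basis $(e_1,\dots,e_{d+1})$ is only defined and continuous off the tie locus $\cB$. This gap is real and the paper silently elides it, so your approximation step is a genuine improvement in rigor.

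One small technical caveat on your sandwich step: the Urysohn construction of $F^\pm$ as you describe it requires $\cB$ to be closed, which you assert but which is not completely obvious (it is true on each compact piece of $M$, since two distinct competing minimal vectors cannot collide in the limit without producing an arbitrarily short lattice vector, but patching this together near the cusp requires a word). You can sidestep the issue entirely by quoting the portmanteau theorem in the form: if $\Psi_N\Rightarrow\Psi$ weakly and $f$ is bounded with $\Psi(\mathrm{disc}(f))=0$, then $\int f\,d\Psi_N\to\int f\,d\Psi$. Since $\mathrm{disc}(F)\subset\cB\times\T^d$ and $\mu(\cB)=0$, this applies directly, with no topological hypothesis on $\cB$ beyond $\mu$-negligibility. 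Either way the conclusion stands; the rest of your argument (horospherical geometry, non-escape of mass to upgrade from compactly supported to bounded continuous test functions) is correct and matches the intent of the paper's citation.
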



\section{Oscillating terms} \label{sec.oscill}

Recall the definitions of $\gamma$ and $X_m$ given in section \ref{SSContr} (equations \eqref{EqDefX}
and \eqref{EqDefGamma}). 
Recall also the definition of the function $P(t)=\sup_{x \in \partial \cC} (t,x)$.  We denote by $\mu_d$ the distribution of $e_1(L),\ldots, e_{d+1}(L)$ when $L$ is distributed according to Haar measure on $M={\rm SL}(d+1,\R)/{\rm SL}(d+1,\Z)$. We denote by $\lambda_{d,\eps}$ the Haar measure on $\T^{d+1} \times \T^{\cZ_\eps}$
and by $\bar \lambda_{d,\eps}$ the Haar measure on $\T^{d+1} \times \T^{\cZ_\eps} \times  \T^{\cZ_\eps}$.

The goal of this section is to prove the following. 
\begin{prop} \label{prop.independence} If $\a,x,r$ are distributed with smooth densities on $\T^d \times \T^d \times [a,b]$,  the random variables 
\begin{equation} e_1(N,\a),\ldots, e_{d+1}(N,\a), \quad 
\{\gamma_j\}_{j=1}^{d+1}, \quad 
\{A_m  \}_{m\in \cZ_\eps}
\end{equation}
with  $A_m=N^{\frac{1}{d}} P(X_m)r$,  converge in distribution as $N \to \infty$ to $\mu_d \times \lambda_{d,\eps}$.
In the non symmetric case,  the distribution of the random variables 
 \begin{equation}
e_1(N,\a),\ldots, e_{d+1}(N,\a), \quad  \{\gamma_j\}_{j=1}^{d+1}, \quad \{A_m  \}_{m\in \cZ_\eps}, \quad \{ \bar{A}_m \}_{m\in \cZ_\eps} \end{equation}
where $\bar{A}_m=N^{\frac{1}{d}} P(-X_m) r$, converge in distribution as  $N \to \infty$  to  $\mu_d \times \bar{\lambda}_{d,\eps}$.
\end{prop}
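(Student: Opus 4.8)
## Proof proposal for Proposition \ref{prop.independence}

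The plan is to bootstrap the equidistribution of horocycle pieces (Proposition of Section \ref{SSEq}) from the $e_i$-variables alone to the joint distribution that also includes the torus-valued variables $\{\gamma_j\}$ and $\{A_m\}$ (and $\{\bar A_m\}$ in the non-symmetric case). The natural way to do this is a Weyl-type criterion: it suffices to show that for every finite collection of nonzero frequency vectors the corresponding exponential sum, averaged over $(\a,x,r)$ against the given smooth densities, converges to the product of the average of the $e_i$-dependent factor (against $\mu_d$) with the indicator that the torus-frequencies vanish. Concretely, I would test against functions of the form $\Psi(e_1(N,\a),\dots,e_{d+1}(N,\a))\, e^{2\pi i \sum_j n_j \gamma_j}\, \prod_{m\in\cZ_\eps} e^{2\pi i (\ell_m A_m + \ell'_m \bar A_m)}$ with $\Psi$ bounded continuous and $(n,\ell,\ell')$ integer vectors, and show the average converges to $\left(\int_M \Psi\, d\mu_d\right)\cdot\mathbf{1}[n=0,\ \ell=0,\ \ell'=0]$.

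The key steps, in order, would be the following. First, integrate in $x$: since $\gamma_j(\a,x,N)$ is linear in $x$ with coefficients $N^{1/d} e_{j,\bullet}(N,\a)$, integrating $e^{2\pi i \sum_j n_j \gamma_j}$ over $x\in\T^d$ gives $0$ unless the vector $\sum_j n_j e_{j,\bullet}(N,\a)$ is an integer vector; for $\a\notin E_N$ and $N$ large this forces $n=0$ by the precompactness/short-vector argument already established in Section \ref{sec.shortestvectors} (a nonzero integer combination of the $e_i$ would be a lattice vector whose rescaled version is both bounded and bounded away from $0$, and $N^{1/d}$ times its $x$-coordinates cannot be integral). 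So the $x$-average kills all terms with $n\neq 0$ up to an error supported on $E_N$, which has measure $O(\eps)$. Second, with $n=0$, integrate in $r$: $A_m = N^{1/d} P(X_m) r$ and $\bar A_m = N^{1/d} P(-X_m) r$ are, for fixed $\a$, affine in $r$, so $\int_a^b e^{2\pi i r N^{1/d}(\sum_m \ell_m P(X_m)+\ell'_m P(-X_m))}\,p(r)\,dr \to 0$ as $N\to\infty$ by Riemann--Lebesgue, provided the coefficient $\sum_m \ell_m P(X_m)+\ell'_m P(-X_m)$ does not vanish and does not tend to $0$. Third, having reduced to $n=0$ and (the coefficient of) the $r$-oscillation vanishing, what remains is $\int_{\T^d}\Psi(e_1(L(N,\a)),\dots)\,d\a$, to which the equidistribution Proposition of Section \ref{SSEq} applies directly, giving $\int_M\Psi\,d\mu_d$. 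Finally, one must argue that in the symmetric case $P(-X_m)=P(X_m)$, so $A_m$ and $\bar A_m$ coincide and only the diagonal torus $\T^{\cZ_\eps}$ appears, whereas in the non-symmetric case $P(X_m)$ and $P(-X_m)$ are distinct and — crucially — for $m\neq m'$ in $\cZ_\eps$ the four real numbers $P(X_m),P(-X_m),P(X_{m'}),P(-X_{m'})$ are rationally independent of each other for $\mu_d$-a.e.\ $L$, so the only way the $r$-oscillation coefficient vanishes identically (in the limit) is $\ell=\ell'=0$.

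The main obstacle is this last independence/non-degeneracy point: one must show that for a.e.\ lattice the values $\{P(\pm X_m)\}_{m\in\cZ_\eps}$ satisfy no nontrivial rational relation, so that the Riemann--Lebesgue step genuinely forces $\ell=\ell'=0$ rather than merely constraining them. This is where analyticity (or the genericity hypothesis) of $\cC$ enters: $X_m = X_m(L)$ are distinct real-analytic functions of $L$ for distinct $m$ (their directions $X_m/R_m$ are distinct), $P$ is the support function of $\cC$, and one must verify that $P$ composed with these analytic maps cannot satisfy a linear relation with integer coefficients on a positive-measure set of lattices — equivalently, that the relevant analytic function of $L$ is not identically zero. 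I would handle this by exhibiting a single lattice (or a one-parameter family) where the relation manifestly fails, using that the $X_m$ sweep out many directions and that a nontrivial integer relation $\sum \ell_m P(\xi_m) + \ell'_m P(-\xi_m) = 0$ among support-function values at finitely many fixed directions fails for generic (analytic, strictly convex) $\cC$. A second, more routine, obstacle is the careful handling of the error set $E_N$ and of the large-$N$ uniformity of the bound $\|m\|\le M(\eps)$; but these are already in place from Section \ref{sec.shortestvectors}, so the argument there can be quoted. Once the Weyl sums are shown to converge to the claimed limits, the stated convergence in distribution to $\mu_d\times\lambda_{d,\eps}$ (resp.\ $\mu_d\times\bar\lambda_{d,\eps}$) follows by the usual Stone--Weierstrass/Lévy continuity argument on the compact group $M\times\T^{d+1}\times\T^{\cZ_\eps}$ (resp.\ with two copies of $\T^{\cZ_\eps}$).
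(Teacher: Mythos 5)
Your overall strategy --- test against characters $\Psi(e_1,\dots,e_{d+1})\,e^{2\pi i\sum n_j\gamma_j}\prod_m e^{2\pi i(\ell_m A_m+\ell'_m\bar A_m)}$, kill the $n\neq 0$ terms by the $x$-average, kill the $(\ell,\ell')\neq 0$ terms by the $r$-average, and invoke the equidistribution proposition of Section~\ref{SSEq} for what remains --- is precisely the route the paper takes.

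Two points, however. First, a small but real confusion in your $x$-step. Since $e_j=g_{\ln N}\Lambda_\alpha m_j$ for integer vectors $m_j$, the frequency $N^{1/d}\sum_j n_j e_{j,k}=\sum_j n_j(m_j)_k$ is \emph{always} an integer for each $k\le d$, so ``gives $0$ unless integral'' is backwards: the $x$-integral against Lebesgue vanishes \emph{exactly when} this integer vector is nonzero. The correct precompactness argument is then: if it were the zero vector, $\sum_j n_j e_j$ would lie on the $Z$-axis with last coordinate $N\cdot(\text{integer})$, which for $\alpha\notin E_N$ and $N$ large contradicts $\|\sum_j n_j e_j\|\le C(\eps)\|n\|_1$, forcing $n=0$. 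This is actually a cleaner mechanism than the one the paper uses; the paper instead observes that $e_{1,1}(L),\dots,e_{d+1,1}(L)$ are $\Q$-independent for a.e.\ $L$, deduces via \eqref{EquidWu2} that the set where $|\sum_j n_j e_{j,1}(N,\a)|<N^{-1/(2d)}$ has vanishing measure, and integrates by parts in $x_1$ on the complement. The paper's version is more robust (it applies verbatim to smooth densities on $x$, for which the exact cancellation you rely on does not hold), so if you want the proposition as stated you should adopt it.

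Second, and this is the genuine gap: you correctly identify that the $r$-step hinges on the numbers $\{P(\pm X_m(L))\}_{m\in\cZ_\eps}$ admitting no nontrivial integer relation for $\mu$-a.e.\ $L$, and that one then needs a quantitative version (the measure of $\alpha$ with $|\sum_i l_iP(X_{m_i})+\tilde l_iP(-X_{m_i})|<\eps$ tends to $0$) to run Riemann--Lebesgue uniformly; but you leave this unproved, proposing only to ``exhibit a single lattice where the relation manifestly fails.'' That does not by itself suffice, and in any case it is exactly the serious work of the paper. The paper establishes the much stronger \emph{functional} independence: Proposition~\ref{independence} shows that $\sum_i l_i p_{m_i}+\sum_i\tilde l_i\tp_{m_i}\equiv 0$ forces all coefficients to vanish, by restricting $P$ to a $2$-plane parametrized so that each $m_i$ contributes a term $|(m_i,\a)|f^{(2)}(\delta)(m_i,\beta)^2/|(m_i,\a)|$ (resp.\ $\tf^{(2)}$) at order $\theta^2$, then sending $(m_j,\alpha)\to 0$ while keeping the others bounded away from $0$ to isolate $l_j$. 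This in turn rests on Lemma~\ref{analytic} ($f_L$ is analytic and not a polynomial, so $f''\not\equiv 0$) and, for the non-symmetric case, on Lemma~\ref{LmNonSym} (failure of \eqref{WSym} for all $L$ forces $\cC$ to have a center of symmetry). Functional independence then yields that the analytic function $L\mapsto\sum l_iP(X_{m_i}(L))+\tilde l_iP(-X_{m_i}(L))$ is not identically zero, hence vanishes on a null set, and \eqref{AlmRes}--\eqref{AlmRes2} follow from \eqref{EquidWu2}. Without this chain, the Riemann--Lebesgue step does not close, so you should supply it.
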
 

We will prove Proposition \ref{prop.independence} in Section \ref{SSParts}. We will first prove in Section  
\ref{secsec52} 
that for $m_1,\ldots,m_K \in \cZ$, the   $\{P(X_{m_i})\}_{i=1}^K$ are typically 
independent over $\Q$ and in the non symmetric case we want to prove that for $m_1,\ldots,m_K \in \cZ$, 
the   $\{P(X_{m_i})\}_{i=1}^K$ and $\{P(-X_{m_i})\}_{i=1}^K$ are typically independent over $\Q$. 
The precise statements to which this section is devoted are enclosed in equations   (\ref{AlmRes}) and  (\ref{AlmRes2}) at the end of Section \ref{secsec52}. We will first need two auxiliary lemmas about the function $P$ that we include in the next section.

\subsection{}
\label{secsec51}
For any $L \in \text{GL}(d,\R)$ viewed as a linear invertible map of $\R^d$, we define $f_L:\R \to \R $ as $f_L(\delta)=(P \circ L) (1,\delta,0,\ldots,0)$. We also denote $\tilde{f}_L(\delta)=(P \circ L) (-1,-\delta,0,\ldots,0)$.

\begin{lemma} \label{analytic} If $\cC$ is real analytic we have that for any $L \in \text{GL}(d,\R)$, $f_L$ is real analytic and not equal to a polynomial. 
\end{lemma}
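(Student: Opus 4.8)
The plan is to analyze the function $f_L(\delta) = (P\circ L)(1,\delta,0,\ldots,0)$ by first understanding the structure of $P$ itself. Recall $P(t) = \sup_{x\in\partial\cC}(t,x)$ is the support function of $\cC$. Since $\cC$ is strictly convex with real-analytic boundary, for each nonzero direction $t$ the supremum is attained at the unique point $x(t)\in\partial\cC$ whose outer unit normal is $t/|t|$; writing the inverse of the (analytic, by strict convexity and the implicit function theorem) Gauss map as $\nu^{-1}$, we have $x(t) = \nu^{-1}(t/|t|)$, and $P(t) = (t,\nu^{-1}(t/|t|))$. The map $t\mapsto \nu^{-1}(t/|t|)$ is real-analytic on $\R^d\setminus\{0\}$, so $P$ is real-analytic on $\R^d\setminus\{0\}$. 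Then $f_L$ is the composition of $P$ with the analytic curve $\delta\mapsto L(1,\delta,0,\ldots,0)$, which stays away from the origin on a neighborhood of any given $\delta_0$ (since $L$ is invertible and $(1,\delta,0,\ldots,0)\neq 0$), hence $f_L$ is real-analytic.

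The substantive claim is that $f_L$ is not a polynomial. The key structural fact is that $P$ is positively homogeneous of degree $1$: $P(\lambda t) = \lambda P(t)$ for $\lambda>0$. First I would reduce to $L = \id$: since $L(1,\delta,0,\ldots,0)$ traces an affine line in $\R^d$, and the support function of $L\cC$ (up to the linear reparametrization coming from $L^T$) is again the support function of a strictly convex analytic body, it suffices to show that the restriction of the support function of \emph{any} strictly convex analytic body to an affine line not through the origin is never a polynomial. Suppose for contradiction $f_L$ were a polynomial $Q(\delta)$. Homogeneity gives strong rigidity: for the support function $P$, along rays we have exact linear growth, and this is incompatible with polynomial behavior along a generic affine line unless $Q$ has degree $\le 1$. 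Indeed, if $\gamma(\delta)$ is the affine line, then $|P(\gamma(\delta))| \le |\gamma(\delta)|\cdot\max_{|u|=1}|P(u)| = O(|\delta|)$ as $\delta\to\infty$, forcing $\deg Q \le 1$, so $Q$ is affine: $f_L(\delta) = a\delta + b$.

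The final step is to rule out $f_L$ being affine. If $P(\gamma(\delta)) = a\delta+b$ along the line $\gamma(\delta) = u_0 + \delta u_1$ (with $u_1\neq 0$), then recalling $P(\gamma(\delta)) = \sup_{x\in\partial\cC}(u_0+\delta u_1, x)$, affineness in $\delta$ of a supremum of affine functions is a very restrictive condition: it forces the supremum to be attained at a single point $x^*\in\partial\cC$ for \emph{all} $\delta$ in an interval, i.e. $(u_0+\delta u_1)/|u_0+\delta u_1|$ is the outer normal at $x^*$ for all such $\delta$. But the set of directions $\{(u_0+\delta u_1)/|u_0+\delta u_1| : \delta\in I\}$ is a nondegenerate arc on $\Sp_{d-1}$, whereas strict convexity forces the Gauss map to be injective, so the normal direction cannot be constant along a nondegenerate arc of $\gamma$ — contradiction. (One must separately handle the possibility that $\gamma$ passes through regions where the sup is attained at different points; since $f_L$ is analytic and equal to an affine function on an interval, it would be affine everywhere on its domain of analyticity, and the arc argument applies on any subinterval.) I expect this last step — precisely arguing that an affine support function along a line forces a constant normal, exploiting strict positivity of curvature — to be the main obstacle, though it is conceptually straightforward; the homogeneity-plus-analyticity reduction is routine.
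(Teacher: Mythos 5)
Your proposal is correct, and the first two steps (analyticity, then degree $\le 1$ via the $O(|\delta|)$ growth coming from homogeneity) match the paper's argument in substance --- the paper phrases the growth bound as $f_L(\delta)/\sqrt{1+\delta^2}$ being bounded, which is the same estimate. Where you diverge genuinely is the final step. The paper rules out degree~$\le 1$ by noting that $f_L$ is strictly positive (this uses $0 \in \mathrm{int}\,\cC$) and ``not constant,'' without justifying the latter assertion. You instead argue directly that if $P$ were affine along the line $\gamma(\delta)$, then --- since $P(\gamma(\delta)) = \sup_{x\in\partial\cC}(\gamma(\delta),x)$ is a supremum of functions affine in $\delta$, and by uniqueness of the maximizer under strict convexity --- the supremum would be attained at a single fixed $x^*\in\partial\cC$ for all $\delta$, forcing the outer unit normal at $x^*$ to equal $\gamma(\delta)/|\gamma(\delta)|$ for a whole arc of varying directions, which is absurd. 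This argument handles the constant and non-constant affine cases uniformly, and in particular supplies exactly the justification the paper elides for why $f_L$ cannot be constant; conversely the paper's positivity argument is shorter once non-constancy is granted. Both are sound; yours is the more self-contained of the two.
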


\begin{proof} We have that $f_L(\delta)=\sqrt{1+\delta^2} (P \circ L) \left(\frac{1}{\sqrt{1+\delta^2}}, \frac{\delta}{\sqrt{1+\delta^2}},0,\ldots,0\right)$. 
Suppose $f_L$ is a polynomial. Observe that 
$ (P \circ L) \left(\frac{1}{\sqrt{1+\delta^2}}, \frac{\delta}{\sqrt{1+\delta^2}},0,\ldots,0\right)$ 
is bounded so that $f_L$ can only be of degree at most  $1$. Since $f_L$ is strictly positive and not constant this leads to a contradiction. 
\end{proof}

We will need the following lemma for the non symmetric case.
\begin{lemma}
\label{LmNonSym} The following alternative holds. Either
\begin{itemize}
\item[(i)] There exists $L \in \text{GL}(d,\R)$ and $\delta, \delta' \in \R$ such that 
\begin{equation}
\label{WSym}
\frac{f^{(2)}_L(\delta)}{f^{(2)}_L(\delta')}\neq \frac{\tf^{(2)}_L(\delta)}{\tf^{(2)}_L(\delta')} 
\end{equation}
or 
\item[(ii)] $\cC$ has a center of symmetry.
\end{itemize}
\end{lemma}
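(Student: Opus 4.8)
The plan is to argue by contradiction: assume (i) fails, i.e. for every $L \in \text{GL}(d,\R)$ and all $\delta,\delta'$ (at which the denominators do not vanish) we have
\begin{equation*}
\frac{f^{(2)}_L(\delta)}{f^{(2)}_L(\delta')}=\frac{\tf^{(2)}_L(\delta)}{\tf^{(2)}_L(\delta')},
\end{equation*}
and deduce that $\cC$ has a center of symmetry. Fixing $\delta'=0$ (say), this identity says that $\tf^{(2)}_L = c(L)\, f^{(2)}_L$ for a constant $c(L)$ depending only on $L$, so $\tf_L$ and $f_L$ differ by $c(L)\cdot(\text{affine function of }\delta)$. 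The first step is to interpret $f_L$ and $\tf_L$ geometrically: since $P(t)=\sup_{x\in\partial\cC}(t,x)$ is the support function of $\cC$, $P\circ L$ is the support function of $L^{-1}\cC$ (up to transpose — I would set this up carefully, replacing $L$ by $L^\top$ if needed so that $P\circ L$ is genuinely a support function of some linear image of $\cC$), and $f_L,\tf_L$ are the restrictions of this support function to the two opposite rays $\{(1,\delta,0,\dots,0)\}$ and $\{(-1,-\delta,0,\dots,0)\}$ in a chosen $2$-plane. Thus the hypothesis is a statement, for every linear image of $\cC$ and every coordinate $2$-plane through the origin, relating the support function on a line through the origin to the support function on the antipodal line.

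Next I would reduce to a clean analytic statement about the support function $h=h_\cC$ of $\cC$ itself. The function $P(t)$ is positively homogeneous of degree $1$; for the symmetrized body $\cC_0=\tfrac12(\cC + (-\cC))$ the support function is $h_0(t)=\tfrac12(h(t)+h(-t))$, and $\cC$ has a center of symmetry iff $h(t)+h(-t)$ is linear in $t$ (equivalently $\cC$ is a translate of $-\cC$). So the goal becomes: the vanishing of alternative (i) forces $h(t)+h(-t)$ to be (the restriction to each line of) an affine function, hence globally linear by homogeneity considerations, hence $\cC$ is centrally symmetric. The key computation is that the ratio condition, being required for \emph{all} $L$, lets me localize: given any direction $\xi$ and any $2$-plane $V\ni\xi$, by choosing $L$ to map the standard plane to $V$ and $\xi$ to $(1,0,\dots,0)$, the condition $\tf^{(2)}_L=c(L)f^{(2)}_L$ becomes a relation between the second derivatives of $h$ restricted to the two arcs of the circle $\Sp_{d-1}\cap V$ near $\xi$ and near $-\xi$. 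Writing $h$ in polar form on that circle and using that the radius of curvature of $\partial\cC$ in the plane $V$ at the point with outer normal $\xi$ is $h+h''$ (the classical formula, with $''$ the second angular derivative), the relation says that the curvature data at normal $\xi$ and at normal $-\xi$ are proportional with a factor $c(L)$ that cannot depend on the plane $V$ (since different $V$'s share the line $\pm\xi$), and then an integration/compactness argument over all $\xi$ forces $c\equiv 1$ and $h(\xi)+h(-\xi)$ to have vanishing ``curvature'', i.e. to be linear — which is exactly central symmetry.

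The main obstacle I expect is the last step: upgrading the local/planar proportionality relations (one for each normal direction $\xi$ and each $2$-plane) into the global conclusion that $h(t)+h(-t)$ is linear. Handling the degenerate set where $f^{(2)}_L(\delta')=0$ (inflection-type points in a plane section, i.e. where the radius of curvature formula $h+h''$ meets subtleties) requires the analyticity of $\cC$, via Lemma \ref{analytic}, which guarantees $f_L$ is analytic and not a polynomial so its second derivative does not vanish identically and the ratio in \eqref{WSym} is well-defined on an open dense set. I would use analyticity throughout to pass from ``equal on an open set'' to ``equal identically,'' and to make sense of $c(L)$ as an analytic function of $L$ that one then shows is constant $=1$ by evaluating along a path degenerating $\cC$ toward its symmetrization. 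Once $c\equiv 1$, the statement $h(t)+h(-t)$ has affine restriction to every line through $0$ and is homogeneous of degree $1$ and even, which forces it to be linear, completing the proof that $\cC$ is centrally symmetric.
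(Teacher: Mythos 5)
Your setup is right: assuming (i) fails does give $\tf^{(2)}_L = c(L)\,f^{(2)}_L$ for a constant depending only on $L$, and phrasing things via the support function $P$ is exactly the right framework. But there are two genuine problems.

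First, the characterization of central symmetry is backwards. $\cC$ has a center of symmetry iff $h(t)-h(-t)$ is a linear function of $t$, not $h(t)+h(-t)$ as you write. Indeed, $\cC=-\cC+2x_0$ translates into $h(t)=h(-t)+2(t,x_0)$, i.e.\ $h(t)-h(-t)=2(t,x_0)$. Your claimed target, ``$h(t)+h(-t)$ is linear,'' is actually impossible for a genuine body: $h(t)+h(-t)$ is an \emph{even} degree-one homogeneous function, and the only even linear function is $0$, which would force $h\equiv 0$. Your closing sentence (``homogeneous of degree $1$ and even, which forces it to be linear'') is therefore vacuous; the right statement, as in the paper, is that $P(x)-P(-x)$ is \emph{odd} with affine restrictions to every $2$-plane through the origin, hence globally linear.

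Second, and more seriously, the step that pins down $c(L)$ is not actually carried out. You assert that $c(L)$ ``cannot depend on the plane $V$'' and then appeal to a vague ``integration/compactness argument'' or ``evaluating along a path degenerating $\cC$ toward its symmetrization'' to get $c\equiv 1$. This is where the real work lies, and your gesture at it does not obviously close. The paper does this concretely and elementarily, without any curvature formula: it works with $L=Id$ only, uses the degree-one homogeneity $P(x,y,0,\ldots)=xP(1,y/x,0,\ldots)$ to turn the relation $f^{(2)}=c\tf^{(2)}$ into the identity $\partial_y^2 P(x,y,0,\ldots)=c\,\partial_y^2 P(-x,-y,0,\ldots)$ for $x>0$, extends it to all $(x,y)$ by analyticity, and then substitutes $(x,y)\mapsto(-x,-y)$ to get $c^2=1$. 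It then rules out $c=-1$ by a sign argument: if $0\in\cC$ both $P$ and $P(-\cdot)$ are positive, and integrating the second-derivative relation gives $P(x,y,\ldots)-cP(-x,-y,\ldots)=a(x)+b(x)y$; taking $c=-1$ and $x=0$ would force the positive function $P(0,y,\ldots)+P(0,-y,\ldots)$ to equal $b(0)y$, which changes sign — contradiction. None of this appears in your proposal. So beyond the sign error, the ``$c\equiv 1$'' step is a gap that the heavier support-function/curvature machinery you propose does not obviously fill, whereas the paper's homogeneity-plus-positivity argument does the job in a few lines.
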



\begin{proof}
Suppose that (i) does not hold. Let $L= Id$. We have that  $f^{(2)}=c \tf^{(2)}$ for some constant $c$. 
In other words
\begin{equation}
\left(\frac{\partial}{\partial \delta}\right)^2 P(1, \delta, 0, \ldots, 0)=
c \left(\frac{\partial}{\partial \delta}\right)^2 P(-1, -\delta, 0, \ldots, 0). 
\end{equation}
Since for $x>0$ we have
$P(x,y, 0, \ldots, 0)=xP(1, y/x, 0, \ldots, 0)$ it follows that
\begin{equation}
 \partial_y^2 P(x, y, 0, \ldots, 0)=c \partial_y^2 P(-x, -y, 0, \ldots, 0)
\end{equation}
for $x>0.$ Since $\cC$ is analytic, this equality in fact holds identically.
In particular
\begin{equation}
\partial_y^2 P(-x, -y, 0, \ldots, 0)=c \partial_y^2 P(x, y, 0, \ldots, 0)
\end{equation}
so that $c=\pm 1.$
Rewriting the last equation as
\begin{equation}
\partial_y^2 \left[P(-x, -y, 0, \ldots, 0)-c P(x, y, 0, \ldots, 0) \right]=0 
\end{equation}
we conclude that
\begin{equation}
P(x,y,0,\ldots,0)-cP(-x,-y,0,\ldots,0)=a(x)+b(x)y
\end{equation}
Assuming that $0\in \cC$ we have that
both $P(x,y,0,\ldots,0)$ and $P(-x,-y,0,\ldots,0)$ are positive. This implies that $c=1.$ 
Indeed substituting $x=y=0$ we see that $a(0)=0$ and if $c$ were equal to $-1$ we would get 
\begin{equation}
P(0,y,0,\ldots,0)+P(0,-y,0,\ldots,0)=b(0)y.
\end{equation}
Since the RHS can not be positive for all $y$ we get a contradiction proving that $c$ is actually equal to $1.$

Interchanging the roles of $x$ and $y$, we get 
$$P(x,y,0,\ldots,0)-P(-x,-y,0,\ldots,0)=ax+by$$
Because the same reasoning holds for any choice of $L    \in \text{GL}(d,\R)$ we get that the
 restriction of the function
$P(x)-P(-x)$ to every plane is linear. Therefore this function is 
globally linear,
that is, there exists $v \in \R^d$ such that for every $x\in \R^d$
$$P(x)-P(-x)=(x,v). $$

Note that shifting the origin to $x_0$ replaces 
$P(x)$ by $P(x)+(x,x_0)$ and $P(-x)$ by $P(-x)-(x,x_0).$ 
Therefore after shifting the origin to $v/2$ we get
$P(x)=P(-x)$ so that $\cC$ is symmetric.
\end{proof}

\subsection{} \label{secsec52}

When $\cC$ is not symmetric, we will assume WLOG that (\ref{WSym}) holds for $L=Id$.     
For $m \in \Z^{d+1}$ define the function $p_{m}:\R^{2(d+1)}\to \R : (x,y)\mapsto P((m,x),(m,y),0,\ldots,0)$. We know from Lemma \ref{analytic} that  the function $f(\delta)=P(1,\delta,0,\ldots,0)$ is not a polynomial. 
In case the body $\cC$ is not symmetric we also consider 
$$\tilde{f}(\delta)= P(-1,-\delta,0,\ldots,0)  \text{ and }\tp_m=P(-(m,x), -(m,y), 0, \ldots, 0).$$  


\begin{prop} \label{independence} For any $m_1,\ldots,m_K \in \cZ$, if $l_1, \ldots, l_K $ are such that 
$\sum_{i=1}^K l_i p_{m_i} \equiv 0$, then $l_i=0$ for $i=1,\ldots,K$. 

If $\cC$ is non symmetric 
we have that  for any $m_1,\ldots,m_K \in \cZ$, if $l_1, \ldots, l_K $,  $\tilde{l}_1, \ldots, \tilde{l}_K, $  are such that 
$\sum_{i=1}^K l_i p_{m_i}+\sum_{i=1}^K \tilde{l}_i \tilde{p}_{m_i} \equiv 0$ then $l_i=\tilde{l}_i=0$ for $i=1,\ldots,K$.
\end{prop}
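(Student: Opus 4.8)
The plan is to reduce the statement to a purely local question about the analytic function $f$ (and $\tilde f$) and then exploit analyticity together with the fact, established in Lemma \ref{analytic}, that $f$ is not a polynomial. First I would observe that each $p_{m}$ is, after restricting to the two-dimensional coordinate plane spanned by the first two variables, a rescaled copy of $f$: writing $t=(m,x)$, $s=(m,y)$, positive homogeneity of $P$ gives $p_m(x,y)=P(t,s,0,\dots,0)=t\,f(s/t)$ on the region $t>0$ (and the analogous formula with $\tilde f$ for $\tilde p_m$ on $t<0$). Hence a linear relation $\sum_i l_i p_{m_i}\equiv 0$ among functions on $\R^{2(d+1)}$ becomes, after choosing a generic line $x=x_0 u$, $y=y_0 u$ and tracking the slopes $\delta_i=(m_i,y_0)/(m_i,x_0)$, a one-variable identity. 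The key point is that distinct primitive vectors $m_i\in\cZ$ give, for generic $(x_0,y_0)$, \emph{distinct} slopes $\delta_i$, and moreover one can move the base point to make the arguments of the various $f(\delta_i)$ range independently over small intervals.

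The heart of the argument is then the following one-variable claim: if $f$ is real analytic and not a polynomial, and $c_1\delta_1,\dots$ are ``independent directions'', then no nontrivial linear combination $\sum_i l_i\, [\text{affine-rescaled copy of }f] $ can vanish identically. I would prove this by differentiating enough times to kill the polynomial/affine ambiguities that arise from the homogeneity factor $t$, and then using that an analytic function which is not a polynomial has derivatives of every order not identically zero; comparing growth of high-order derivatives (or, cleanly, looking at the singularity structure / radius of convergence, or simply picking a point where $f^{(n)}\neq 0$ for suitable large $n$ and using that the rescalings by different $m_i$ act on this point differently) forces all $l_i=0$. Concretely: apply $\partial_s^2$ to the relation $\sum_i l_i\, t_i f(s_i/t_i)=0$ where $t_i=(m_i,x)$, $s_i=(m_i,y)$; this yields $\sum_i l_i\, t_i^{-1} (m_i,\cdot)^2$-weighted copies of $f^{(2)}(s_i/t_i)=0$, and then freezing all slopes but one (possible because the $m_i$ are pairwise non-proportional over $\Q$, hence the linear forms $(m_i,\cdot)$ are pairwise independent) isolates a single term and forces the corresponding $l_i=0$.

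For the non-symmetric case the same scheme applies but one must treat $f$ and $\tilde f$ simultaneously. Here I would use Lemma \ref{LmNonSym}: since $\cC$ has no center of symmetry, alternative (i) holds, i.e. $f^{(2)}$ and $\tilde f^{(2)}$ are not proportional (for $L=\mathrm{id}$, which we assumed WLOG). A relation $\sum_i l_i p_{m_i}+\sum_i \tilde l_i \tilde p_{m_i}\equiv 0$ restricts, on the region where all $t_i=(m_i,x)$ have a fixed sign pattern, to an identity involving only $f$ for the $+$ part and only $\tilde f$ for the $-$ part on complementary half-spaces; by varying $x$ across the hyperplanes $(m_i,x)=0$ one can, term by term, decouple the $f$-relations from the $\tilde f$-relations \emph{unless} a cancellation between an $l_i p_{m_i}$ and an $\tilde l_j \tilde p_{m_j}$ persists, which (after two $s$-derivatives) would force a proportionality between $f^{(2)}$ and $\tilde f^{(2)}$ — contradicting (i). Once decoupled, the symmetric argument applies to conclude $l_i=0$ and, symmetrically, $\tilde l_i=0$.

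The main obstacle I anticipate is the bookkeeping in the decoupling step: making precise the claim that for pairwise non-proportional primitive vectors $m_1,\dots,m_K$ one can choose the base directions so that the slopes $s_i/t_i$ vary in genuinely independent small parameters, and simultaneously control the sign of each $t_i=(m_i,x)$ — one must choose $x,y$ in a suitable open cone avoiding the finitely many hyperplanes $(m_i,x)=0$ and the ``coincidence'' sets $(m_i,y)(m_j,x)=(m_j,y)(m_i,x)$, which are proper subvarieties, hence avoidable. Everything else (homogeneity reduction, repeated differentiation, invoking non-polynomiality and Lemma \ref{LmNonSym}) is routine once this combinatorial-geometric setup is in place.
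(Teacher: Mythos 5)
Your overall template — reduce via positive homogeneity to a one-variable problem, take two derivatives to kill the affine ambiguity, invoke non-polynomiality from Lemma~\ref{analytic}, and use Lemma~\ref{LmNonSym}(i) in the non-symmetric case — is the same one the paper uses. But the central isolation step, the one you flag as ``the main obstacle'' and dismiss as bookkeeping, is where your proposal has a genuine gap. You claim that because the $m_i\in\cZ$ are pairwise non-proportional, ``the linear forms $(m_i,\cdot)$ are pairwise independent,'' so one can vary the slope $s_j/t_j$ while freezing the other slopes. Pairwise independence does not give this: to move $\delta_j=(m_j,y)/(m_j,x)$ while fixing all $\delta_i$, $i\neq j$, you need a direction $v$ with $(m_j,v)\neq 0$ and $(m_i,v)=0$ for every $i\neq j$, which requires $m_j\notin\mathrm{span}\{m_i\}_{i\neq j}$. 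As soon as $K>d+1$ this fails for every $j$, so ``freezing all slopes but one'' is simply impossible, and the asymptotic/singularity argument you sketch as an alternative is not carried out and would require controlling the complex singularities of $f$, which the paper never needs.

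The paper's mechanism is different and sidesteps this entirely. It does not try to make the slopes independent; it deliberately makes them \emph{all equal} at zeroth order by setting $x=\alpha$, $y=\delta\alpha+\theta\beta$, so that the $i$-th slope is $\delta+\theta(m_i,\beta)/|(m_i,\alpha)|$. The $\theta^2$-coefficient of the identity then reads $\sum_i h_i\,(m_i,\beta)^2/|(m_i,\alpha)|=0$, with $h_i$ built from $l_i, \tilde l_i, f''(\delta), \tilde f''(\delta)$ according to the sign of $(m_i,\alpha)$. The isolation now comes not from decoupling the slopes but from making the \emph{weight} $1/|(m_j,\alpha)|$ blow up: since the $m_i$ are distinct primitive vectors, one can choose $\alpha$ with $(m_j,\alpha)>0$ arbitrarily small while $|(m_i,\alpha)|$, $i\neq j$, stay bounded below. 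This forces $h_j=0$ for every $\delta$, and then non-polynomiality (resp.\ non-proportionality of $f''$ and $\tilde f''$) gives $l_j=0$ (resp.\ $l_j=\tilde l_j=0$). This weight-blowup idea is precisely the ingredient your proposal is missing, and it also makes your half-space decoupling in the non-symmetric case unnecessary: the sign of $(m_i,\alpha)$ just determines which combination of $f''$ and $\tilde f''$ appears in $h_i$, and the conclusion follows term by term.
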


\begin{proof} Assume that  $\sum_{i=1}^K l_i p_{m_i} \equiv 0$. We fix $j$ and show that $l_j=0$. Fix $\beta \in \R^{d+1}$ such that $(m_j,\beta)\neq 0$. For $\a \in \R^{d+1}$ and $\delta, \theta \in \R$ we let $x=\a, y=\delta \a + \theta \beta$. Then $p_m(x,y)=|(m,\a)|f(\delta+\theta \frac{(m,\beta)}{|(m,\a)|})$ if $(m,\a)>0$ and 
$p_m(x,y)=|(m,\a)|\tilde{f}(\delta+\theta \frac{(m,\beta)}{|(m,\a)|})$ if $(m,\a)<0$.

Fix $\a, \delta$ and expand the sum in powers of $\theta.$ Equating to
zero the term in front of $\th^2$ we get 
\begin{equation} \label{thet2} 
\sum_{i=1}^K   h_i \frac{(m_i,\beta)^2}{|(m_i,\a)|}=0 
\end{equation}
where $h_i=l_i f''(\delta)$ if $(m_i,\a)>0$ and $h_i=l_i \tilde{f}''(\delta)$ if $(m_i,\a)<0$. 
Now since   $m_1,\ldots,m_K$ are primitive vectors it is possible to choose $\a$ so that $(m_j,\a)>0$ is arbitrary small while $|(m_i,\a)|$ remain bounded away from zero for every $i \neq j$. Thus, we must have that $h_j=0$ and since there exists $\delta$ such that $f''(\delta)\neq 0$ (because $f$ is not a polynomial) we get $l_j=0$. 

When 
 $\sum_{i=1}^K l_i p_{m_i}+\sum_{i=1}^K \tilde{l}_i \tilde{p}_{m_i} \equiv 0$,  \eqref{thet2} becomes
\begin{equation} \label{thet3} 
\sum_{i=1}^K  h_i \frac{(m_i,\beta)^2}{|(m_i,\a)|}=0 
\end{equation}
where $h_i=l_i f''(\delta)+\tilde{l}_i \tilde{f}''(\delta)$ if $(m_i,\a)>0$ and $h_i=l_i \tilde{f}''(\delta)+\tilde{l}_i {f}''(\delta)$ if $(m_i,\a)<0$.  
Consider, for example, the case where the first alternative holds. 
As before, we must have $l_j f''(\delta) +\tilde{l}_j \tilde{f}''(\delta)=h_j=0$ for any choice of $\delta$. Since we assumed  
  (\ref{WSym}) holds for $L=Id$, this yields $l_j=\tilde{l}_j=0$.
\end{proof}

As a consequence of Proposition \ref{independence} we have the following facts. For any  $l_1,\ldots,l_K$ and any $m_1,\ldots,m_K \in \cZ$
\begin{equation}
\label{ResLat}
 \text{Leb} \left(z_1,\ldots,z_d \in (\R^{(d+1)})^d :  \sum_{i=1}^k l_i P((m_i,z_1),\ldots,(m_i,z_d))  =0\right)=0.
\end{equation}
 
So, if we take a lattice $L$ and denote 
$z_j=(e_{j,1}(L),\ldots,e_{j}(L))$, then $P(X_m(L))=P((m,z_1),\ldots,(m,z_d))$, and for any $l_1,\ldots,l_K$ and any $m_1,\ldots,m_K \in \cZ$
\begin{equation}
\label{ResLat}
 \mu\left(L:  \sum_{i=1}^k l_i P(X_{m_i}(L))=0\right)=0
\end{equation}
Now \eqref{EquidWu2} implies that 
\begin{equation}
\label{AlmRes}
\mes\left(\a\in \T^d: \left|\sum_{i=1}^K l_i P(X_{m_i}(L(N,\a)))\right|<\eps\right)\to 0 \text{ as } \eps \to 0, N\to\infty.
\end{equation}

Similarly, in the non symmetric case, it holds that for any $l_1,\ldots,l_K$, $\tilde{l}_1,\ldots,\tilde{l}_K$,  and any $m_1,\ldots,m_K \in \cZ$ and
\begin{equation} \label{AlmRes2} 
\mes\left(\a\in \T^d: \left|\sum_{i=1}^K l_i P(X_m)+\sum_{i=1}^K \tilde{l}_i P(-X_m)\right|<\eps\right)\to 0 
\end{equation}
 as  $\eps \to 0, N\to\infty.$
\subsection{Proof of Proposition \ref{prop.independence}.}
\label{SSParts}

We consider the case when $\cC$ is symmetric. The case when it is non
symmetric is similar.
Take integers $n_1,\ldots, n_{d+1},$  
$\{l_m\}_{m\in \cZ_\eps}$ and a function $\Phi:(\R^{d+1})^{d+1}\to \R$ of compact support.
We need to show that as $N\to\infty$
\begin{equation}
\label{AsInd} 
 \iiint \Phi(e_1(N, \a),\dots, e_{d+1}(N, \a))
 \exp\left[ 2\pi i \left(\sum_{j=1}^{d+1} n_j \gamma_j +\sum_{\cZ_\eps} l_m A_m\right)\right] dx d\a dr \to
\end{equation}
$$ \int_{M} \Phi(e_1(L),\dots e_{d+1}(L)) d\mu(L) 
\int_{\T^{d+1}} e^{2\pi i \sum_j n_j \gamma_j} d\gamma
\int_{\T^{\Z_\eps}} e^{2\pi i\sum_m l_m A_m} dA,$$
  as  $N\to \infty.$
 In case $n_j\equiv 0$ and $l_m\equiv 0$ the result follows from \eqref{EquidWu2}.
 
Therefore we may assume that some $n_j$ or some $l_m$ are non-zero 
so that \eqref{AsInd} reduces to 
 \begin{equation}
\label{AsInd2A} 
 \iiint \Phi(e_1(N, \a),\dots, e_{d+1}(N, \a))
 \exp\left[ 2\pi i \left(\sum_{j=1}^{d+1} n_j \gamma_j +\sum_{\cZ_\eps} l_m A_m\right)\right] dx d\a dr \to 0.
\end{equation}

 Suppose first that   $n_j \neq 0$ for at least one $j$. Recall the definition $\gamma_j(\a,N,x)=N^{1/d} (e_{j,1}(N,\a)x_1+\ldots+e_{j,d}(N,\a)x_d).$  Hence the coefficient in front of $x_1$ in $\sum_j n_j \gamma_j$ equals to
  $N^{1/d} \sum_j n_j e_{j,1}$. 

 Note that for almost every $L$
  the numbers $e_{1,1}(L),\ldots,e_{d,1}(L)$ are independent over $\Z$.  Hence \eqref{EquidWu2} implies that 
\begin{equation}
\label{LargeMostAlpha}
\mes\left(\alpha\in \T^d: 
\left|\sum_j n_j e_{j,1}(N,\a)\right|<\frac{1}{N^{\frac{1}{2d}}}\right) \to 0 
\end{equation}
as $N \to \infty$.
We thus split the LHS of \eqref{AsInd2A} into two parts where $I$ includes the integration over $\alpha$ with
$|\sum_j n_j e_{j,1}|<N^{-\frac{1}{2d}}$ and $\RmII$ includes the integration over $\alpha$ with 
$|\sum_j n_j e_{j,1}|\geq N^{-\frac{1}{2d}}.$ Then
$$|I|\leq \Const(\Phi) \mes(\alpha\in \T^d: |\sum_j n_j e_{j,1}|<N^{-\frac{1}{2d}}) $$
so it can be made as small as we wish in view of \eqref{LargeMostAlpha}.
On the other hand in $\RmII$ we can integrate by parts with respect to
$x_1$ and obtain the estimate
$$ |\RmII|\leq \frac{\Const(\Phi)}{ N^{\frac{1}{2d}}}. $$
This concludes the proof in case not all $n_j$ vanish. 

Similarly if not all $l_m$ vanish then we can integrate with respect to $r$ instead of $x_1$ 
using \eqref{AlmRes} instead of \eqref{LargeMostAlpha} to
obtain \eqref{AsInd2} in that case.
\hfill $\Box$

\subsection{Proof of Theorem \ref{main.limit} }
Combining Proposition \ref{finalreduction} and Proposition \ref{prop.independence} we obtain  Theorem \ref{main.limit} and Proposition \ref{main3} by letting $\eps\to 0.$ 

\hfill $\Box$


\subsection{Generic convex bodies} \label{sec.generic}  Observe that the fact that $\cC$ is real analytic is used only in Section \ref{sec.oscill} to prove \eqref{ResLat}.
For the rest of the argument it is enough that $\cC$ is of class $C^\nu$ where $\nu=\frac{d-1}{2}$ so that we can 
apply the results of \cite{herz} to get the asymptotics of the Fourier coefficients of $\chi_\cC.$

\begin{define}
We say that a convex body $\cC$ is {\it generic} if for any $K \in \N^*$, and any nonzero vectors $\ell=(l_1,\dots l_K, \tl_1, \ldots, \tl_K) \in \Z^{2K}$ and  
$M=(m_1,\dots m_K) \in \Z^K$ and any $\eta>0$, there exists $\eps>0$ such that 
\begin{equation}
\label{Res}
 \mu\left(L: \left|\sum_{i=1}^K \left[l_i P(X_{m_i}(L)+\tl_i P(-X_{m_i}(L))\right]\right|<\eps\right)<\eta. 
\end{equation}
\end{define}

Let $\cB(\eps,\eta,\ell,M)$ be the set of bodies of class $C^\nu$ such that (\ref{Res}) holds. This is clearly an open set and $\bigcup_{n \in \N^*} \cB(1/n,\eta,\ell,M)$ is dense since it contains  real-analytic non symmetric convex bodies. Therefore the set of {\it generic} bodies $\bigcap_{K\in \N^*} \cap_{(\ell,M)\in \Z^{3K}} \bigcap_{jÊ\in \Z^*} \bigcup_{n \in \N^*} \cB(1/n,1/j,\ell,M)$ is generic in the $C^\nu$ topology.

By the foregoing discussion we have
\begin{coro}
Theorem \ref{main.limit} is valid for generic convex bodies of class $C^r$ with $r\geq \nu$, and the limit distribution is given by Proposition \ref{main3}.
\end{coro}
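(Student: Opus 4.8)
The plan is to revisit the proof of Theorem \ref{main.limit} given in Sections \ref{ScNonRes}--\ref{sec.oscill} and to isolate exactly where the hypotheses on $\cC$ are used. Two kinds of inputs enter. First, the asymptotic expansion of the Fourier coefficients of $\chi_\cC$ from \cite{herz}, recorded in \eqref{fourier.strictly.convex} and \eqref{fourier.strictly.convex.symmetric}; by the main result of \cite{herz} this is valid as soon as $\cC$ is strictly convex of class $C^\nu$ with $\nu=\frac{d-1}{2}$. It is then immediate that nothing beyond $C^\nu$ is needed in the reductions of Section \ref{ScNonRes} (which use only the bounds $|c_k|=\cO(|k|^{-(d+1)/2})$ and $|c_k-d_k|=\cO(|k|^{-(d+3)/2})$), nor in the lattice-geometry arguments of Section \ref{ScGeom} (which use only Dani's correspondence and the horocycle equidistribution of Section \ref{SSEq}), nor in the reduction to Proposition \ref{finalreduction}. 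Second, the only genuinely analytic ingredient is the rational-independence statement \eqref{ResLat} (equivalently \eqref{AlmRes} and, in the non-symmetric case, \eqref{AlmRes2}), which in Section \ref{secsec52} is deduced from Lemmas \ref{analytic} and \ref{LmNonSym} and Proposition \ref{independence}; this is what powers the non-degeneracy step in the proof of Proposition \ref{prop.independence}, namely the integration by parts in $r$ in Section \ref{SSParts}.

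Accordingly I would argue as follows. Let $\cC$ be a generic convex body of class $C^r$ with $r\geq\nu$. By the Definition in Section \ref{sec.generic}, for every $K\in\N^*$, every nonzero $(\ell,M)$ and every $\eta>0$ there is $\eps>0$ for which \eqref{Res} holds. Feeding this into the horocycle equidistribution \eqref{EquidWu2} exactly as in the passage from \eqref{ResLat} to \eqref{AlmRes}, one obtains \eqref{AlmRes} and, in the non-symmetric case, \eqref{AlmRes2}. These are the only consequences of analyticity that are used in Section \ref{sec.oscill}. With them available, the proof of Proposition \ref{prop.independence} in Section \ref{SSParts} goes through verbatim: the integration by parts in $x_1$ handling the variables $\gamma_j$ never used any regularity of $\cC$, while the integration by parts in $r$ handling the variables $A_m$ (and $\bar A_m$ in the non-symmetric case) uses precisely \eqref{AlmRes}--\eqref{AlmRes2}.

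Since every remaining step in the proof of Theorem \ref{main.limit} already required only $C^\nu$ regularity, Propositions \ref{finalreduction} and \ref{prop.independence} both hold for $\cC$, and letting $\eps\to 0$ as in the proof of Theorem \ref{main.limit} yields the limit \eqref{limit2} with the limiting distribution $\cD_\cC$ of Proposition \ref{main3}, given by \eqref{distr.level} in the non-symmetric case and by \eqref{distr.level2} in the symmetric case. That the generic bodies of class $C^\nu$ form a generic (dense $G_\delta$) subset in the $C^\nu$ topology has already been established in Section \ref{sec.generic}: each $\cB(\eps,\eta,\ell,M)$ is open, and $\bigcup_{n\in\N^*}\cB(1/n,\eta,\ell,M)$ is dense because it contains all real-analytic non-symmetric strictly convex bodies, for which \eqref{ResLat} holds by Proposition \ref{independence}.

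The main obstacle is the bookkeeping: one must verify carefully that analyticity is invoked nowhere outside \eqref{ResLat}. In particular, one should check that Lemma \ref{analytic} (used only to guarantee that $f_L$ is not a polynomial, hence that $f_L''$ does not vanish identically) and Lemma \ref{LmNonSym} are never used except through \eqref{AlmRes} and \eqref{AlmRes2}, and that the genericity condition \eqref{Res} is stated in exactly the quantitative form needed to substitute for them in Section \ref{SSParts}. Once this audit is complete, the corollary follows with no new computation.
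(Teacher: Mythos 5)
Your proposal is correct and follows essentially the same route as the paper: the paper's Section \ref{sec.generic} observes that analyticity enters only through \eqref{ResLat}, that the rest of the argument needs only $C^\nu$ regularity (to invoke Herz's asymptotics), defines generic bodies via the quantitative condition \eqref{Res} as a substitute for \eqref{ResLat}, verifies this set is a dense $G_\delta$, and then concludes. Your audit of where each hypothesis enters is a more detailed version of the same bookkeeping the paper compresses into ``By the foregoing discussion we have.''
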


\begin{remark} {\rm One defines in a similar way a class of {\it generic} symmetric bodies within the symmetric convex bodies of class $C^\nu$ where $\nu=\frac{d-1}{2}$ for which Theorem \ref{main.limit} will hold with a limit distribution  given by Proposition \ref{main3}.
}

\end{remark}

\section{Extensions}
\label{ScExt}
\subsection{Small balls.} \label{sec61} The analysis given above also applies to small copies of a given convex set.

\begin{theo} \label{short} 
Take $\gamma<1/d.$ For any $\mathcal{C}$ striclty convex analytic body 
for any $b>a>0$,  we have 
\begin{equation*} 
\lim_{N \to \infty} \frac{1}{b-a} \lambda \{ (r,\a,x) \in [a,b] \times \T^d\times \T^d 
\Big|  \frac{D_\cC(r N^{-\gamma} ,\a,x,N)}{r^{\frac{d-1}{2}} N^{\frac{d-1}{2d}(1-\gamma d) } } \leq z\}  =\cD_{\mathcal C}(z)
\end{equation*}
where $\cD_\cC(z)$ is the same as in Theorem \ref{main.limit}.
\end{theo}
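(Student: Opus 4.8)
The plan is to retrace the proof of Theorem \ref{main.limit} given in Sections \ref{ScNonRes}--\ref{sec.oscill}, substituting $r$ by $r N^{-\gamma}$ throughout, and to check that with the modified normalization $r^{\frac{d-1}{2}} N^{\frac{d-1}{2d}(1-\gamma d)}$ every reduction step survives unchanged. First I would record the only place where the rescaling enters the Fourier side: by \eqref{fourier.strictly.convex.symmetric} applied to $\cC_{rN^{-\gamma}}$, the coefficients pick up a factor $(rN^{-\gamma})^{\frac{d-1}{2}}$, so that the discrepancy $D_\cC(rN^{-\gamma},\a,x,N)$ equals $(rN^{-\gamma})^{\frac{d-1}{2}} \sum_k c_k(rN^{-\gamma},x)$ with the oscillatory factor now $\sin(2\pi(rN^{-\gamma}P(k)-(d-1)/8+(k,x)))$. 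Dividing by $r^{\frac{d-1}{2}}N^{\frac{d-1}{2d}(1-\gamma d)}=r^{\frac{d-1}{2}}N^{\frac{d-1}{2d}}N^{-\gamma\frac{d-1}{2}}$ exactly cancels the extra $N^{-\gamma\frac{d-1}{2}}$, so the quantity to analyze is the same $\Delta(r,\a,x,N)$ as before except that inside the phase $g(k,r,x)$ one has $rN^{-\gamma}P(k)$ in place of $rP(k)$.

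Next I would observe that the purely metric reductions of Section \ref{ScNonRes} (Lemmas leading to \eqref{bsup}, \eqref{bsup1}, \eqref{binf}, the removal of the error terms $c_k-d_k$, and the replacement of $\sin(\pi\{k,\a\})$ by $\pi\{k,\a\}$) and the entire geometry of Section \ref{ScGeom} (the Dani correspondence, Lemma in Section \ref{sec.shortestvectors}, the equidistribution Proposition in Section \ref{SSEq}) make no reference at all to the $r$-variable: they concern only $\a,x$ and the lattice $L(N,\a)=g_{\ln N}\Lambda_\a\Z^{d+1}$. Hence they carry over verbatim, and we are reduced, exactly as in Proposition \ref{finalreduction}, to the distribution of $2\sum_{p\geq 1}\sum_{m\in\cZ_\eps} q(rN^{-\gamma},\a,x,N,m,p)$ where now the relevant phase variable is $A_m^{(\gamma)}=N^{1/d}P(X_m)\cdot rN^{-\gamma}=N^{(1-\gamma d)/d}P(X_m)\,r$.

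Then I would handle the oscillating terms. The only statement to recheck is the analogue of Proposition \ref{prop.independence}: one needs that the $\{A_m^{(\gamma)}\}_{m\in\cZ_\eps}$ become equidistributed on $\T^{\cZ_\eps}$ and asymptotically independent of the lattice data and of the $\gamma_j$'s. Since $\gamma<1/d$, the exponent $(1-\gamma d)/d$ is strictly positive, so $N^{(1-\gamma d)/d}\to\infty$; therefore the integration-by-parts argument in Section \ref{SSParts} (integrating in $r$, using \eqref{AlmRes}, which itself only requires \eqref{ResLat} and the equidistribution \eqref{EquidWu2}) goes through, with the bound $\Const(\Phi)/N^{1/(2d)}$ replaced by something like $\Const(\Phi)/N^{(1-\gamma d)/(2d)}$ — still $o(1)$. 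With this in hand, the limit law is literally the same $\cD_\cC(z)$ of Proposition \ref{main3}, obtained by letting first $N\to\infty$ and then $\eps\to0$.

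The main obstacle, and the thing to verify with some care, is precisely that positivity of $(1-\gamma d)/d$: the whole mechanism producing a nondegenerate limit distribution relies on $rN^{-\gamma}P(X_m)$ winding many times around the circle as $r$ varies, which fails at $\gamma=1/d$ (the Poisson regime of \cite{M-ETDS}) and worse for $\gamma>1/d$. A secondary point worth a line is uniformity: in the step \eqref{binf}--\eqref{SumAK} and in the Mahler-compactness argument of Section \ref{sec.shortestvectors}, one should confirm that the error estimates do not pick up $N$-dependent factors from the rescaling, which they do not since $rN^{-\gamma}$ is bounded (indeed $\to 0$) and those estimates used only $|d_k|=\cO(|k|^{-(d+1)/2})$, an inequality insensitive to the size of the body. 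Once these checks are made, combining the modified Proposition \ref{finalreduction} with the modified Proposition \ref{prop.independence} and letting $\eps\to0$ gives Theorem \ref{short}.
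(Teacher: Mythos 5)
Your proposal follows exactly the paper's own strategy: retrace Sections \ref{ScNonRes}--\ref{sec.oscill} with $r$ replaced by $rN^{-\gamma}$, note that the normalization $r^{\frac{d-1}{2}}N^{\frac{d-1}{2d}(1-\gamma d)}$ cancels the extra scaling factor, and observe that the only place where the explicit form of the phase matters is Proposition \ref{prop.independence}, where $rN^{1/d}P(X_m)$ becomes $rN^{(1-\gamma d)/d}P(X_m)$ and one still has $N^{(1-\gamma d)/d}\to\infty$ because $\gamma<1/d$. This is precisely the paper's argument.

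One small point you wave away a bit too quickly: you assert that the non-resonant reductions go through because the estimate $|c_k|=\cO(|k|^{-(d+1)/2})$ is ``insensitive to the size of the body,'' but after rescaling, the Herz asymptotics for $\widehat{\chi}_{\cC_{rN^{-\gamma}}}$ are only valid for $|k|\gtrsim N^\gamma$; for $|k|\lesssim N^\gamma$ one must instead invoke the trivial bound $|\widehat{\chi}_{\cC_{rN^{-\gamma}}}(k)|\leq\Vol(\cC_{rN^{-\gamma}})$, which after normalization gives $c_k^*=\cO(N^{-(d+1)\gamma/2})$. The paper makes this two-regime split explicit (its \eqref{SmallK} and \eqref{LargeK}) when redoing the derivation of \eqref{SumAK}, and also absorbs the extra $\cO(N^{\gamma}/|k|)$ relative error in the $c_k^*\to d_k^*$ replacement using $\gamma<1/d$. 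The conclusion you reach is the same because the a priori small-$k$ bound happens to be dominated by $|k|^{-(d+1)/2}$ in that range, but stating that the estimates ``do not pick up $N$-dependent factors'' is not quite right; they do, and one needs $\gamma<1/d$ a second time (beyond the phase argument) to kill them.
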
 
\begin{proof} The proof is very similar to the proof of Theorem \ref{main.limit} so we only describe the necessary
modifications.
We consider the case of symmetric bodies, the non-symmetric case requires straightforward modifications. We have
\begin{equation}
\label{DescSmallB}
\frac{D_\cC(r N^{-\gamma} ,\a,x,N)}{r^{\frac{d-1}{2}} N^{\frac{d-1}{2d}(1-\gamma d) } }=
\end{equation}
$$\sum_{k\in \Z^d-\{0\}} c_k^*(r) \frac{\cos(2\pi (k,x) +\pi(N-1)\{k,\a\}) \sin (\pi N\{k,\alpha\})}{ N^{\frac{d-1}{2d}} \sin(\pi \{k,\alpha\})}
$$
where $c_k^*(r)=N^{\frac{(d-1)\gamma}{2}}c_k(r N^{-\gamma}).$
Making the change of variables which rescales  
$\cC_{rN^{-\gamma}}$ to a unit size we get
see that for $|k|\geq N^{\gamma}$ we have
\begin{equation}
\label{LargeK}
 c_k^*=d_k^*(r)\left(1+\cO\left(\frac{N^\gamma}{|k|}\right)\right)
\end{equation}
where
\begin{equation}
d_k^*(r)= \frac{1}{\pi}    \frac{g(k,r N^{-\gamma})  }{{|k|}^{\frac{d+1}{2}}}.
\end{equation}
On the other hand if $|k|<N^{-\gamma}$ we have an {\it a priori} bound 
\begin{equation}
\label{SmallK}
c_k^*=\cO\left(N^{\frac{(d-1)\gamma}{2}} \Vol(C_{r N^{-\gamma}})\right)=
\cO\left(N^{\frac{-(d+1)\gamma}{2}}\right). 
\end{equation}
Now repeating the computations of Section \ref{ScNonRes} we obtain that
$\frac{D_\cC(r N^{-\gamma} ,\a,x,N)}{r^{\frac{d-1}{2}} N^{\frac{d-1}{2d}(1-\gamma d) } }$
is well approximated by 
\begin{equation}
\label{OnlyResSB}
\sum_{k\in U(N,\a)} d_k^*(r) \frac{\cos(2\pi (k,x) +\pi(N-1)\{k,\a\}) 
\sin (\pi N\{k,\alpha\})}{\pi N^{\frac{d-1}{2d}} \{k,\alpha\}}. 
\end{equation}
Note that in Section \ref{ScNonRes} we only use the bound on the absolute value of the Fourier coefficients.
So the only place where the argument has to be modified is the derivation of \eqref{SumAK}. Namely instead of
using \eqref{fourier.strictly.convex.symmetric} for all $k$ we have to use \eqref{SmallK} for $|k|<N^{-\gamma}$ and 
\eqref{LargeK} for $|k|\geq N^{-\gamma}.$ However the main contribution comes from
the terms where $|k|\geq N^{-\gamma}$ ensuring the validity of \eqref{SumAK}.

\eqref{OnlyResSB} is the same as \eqref{OnlyRes}
except that $r P(k)$ is replaced by $r N^{-\gamma} P(k).$
The explicit form of this term was only used in the proof of Proposition \ref{prop.independence}
 where we have used
that $r |k|\gg 1$ (namely, in sections  \ref{sec.shortestvectors} and 
\ref{SSContr}
 we had $|k|$ of the order of $N^{1/d}$ and we wrote $r P(k)=rN^{1/d} P(k/N^{1/d})$ and we used $rN^{1/d} \to \infty$). In the present setting $r P(k)$ is replaced by $r N^{-\gamma} P(k)$ and we still have $r|k| N^{-\gamma} \to \infty$ since the main
contribution for the discrepancy comes from $|k|\sim N^{1/d}.$ Hence the proof proceeds as before.
\end{proof}

\begin{remark}
While the limiting distributions for 
$\frac{D_\cC(r N^{-\gamma} ,\a,x,N)}{r^{\frac{d-1}{2}} 
N^{\frac{d-1}{2d}(1-\gamma d) } }$
are the same for all $\gamma$ if we fix $\a$ and $r$ then for $\gamma_1\neq \gamma_2$
$$\frac{D_\cC(r N^{-\gamma_1} ,\a,x,N)}{r^{\frac{d-1}{2}} 
N^{\frac{d-1}{2d}(1-\gamma_1 d) } }\not \approx 
\frac{D_\cC(r N^{-\gamma_2} ,\a,x,N)}{r^{\frac{d-1}{2}} 
N^{\frac{d-1}{2d}(1-\gamma_2 d) } }. $$ 
Namely while the small denominators will be the same in both cases the 
terms $\sin \left(2\pi ( r N^{-\gamma} P(k)- (d-1)/8 +(k,x))\right)$ in the numerators 
will be asymptotically independent for different $\gamma$s.
\end{remark}

\subsection{Parametric families of convex sets.}
We shall need the following extension of Theorem 
\ref{main.limit}. 
Assume that we have
an analytic family of convex sets $\{\cC_\alpha\}_{\alpha\in \T^d}.$ That is, we assume that
$P_\alpha(v)$ and $K_\alpha(v)$ are analytic functions on $\T^d\times \Sp^{d-1}.$
We assume that $\alpha$ is distributed according to a measure $\nu$ which has density $\psi.$ 
Let $\brlambda$ denote the product of $\nu$ and the normalized Lebesgue measure on 
$[a,b]\times \T^d.$ 

\begin{theo} The following limit holds. 
\label{ThParam}
$$\lim_{N \to \infty} \brlambda \{ (r, x, \a) \in [a,b] \times \T^d\times \T^d 
\Big| \frac{D_{\cC_\alpha} (r,\a,x,N)}{r^{\frac{d-1}{2}} N^{\frac{d-1}{2d}} } \leq z\}  = $$
$$\mu\times \nu\left\{ (L, (\theta, b), \a)\in \cM_d \times \T^d  : \cL_{\cC_\alpha}(L,\th, b) \leq z \right\}.$$
\end{theo}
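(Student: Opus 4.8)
The plan is to follow the proof architecture of Theorem \ref{main.limit} essentially verbatim, tracking the extra parameter $\alpha$ through each reduction. First I would repeat the analysis of Section \ref{ScNonRes}: the Fourier coefficients $c_k$ of $\chi_{\cC_\alpha}$ now depend on $\alpha$ through $P_\alpha$ and $K_\alpha$, but since $P_\alpha$ and $K_\alpha$ are analytic (hence bounded together with their derivatives) uniformly in $\alpha \in \T^d$, the bounds $|d_k(r,\alpha)| = \cO(|k|^{-(d+1)/2})$ and $|c_k - d_k| = \cO(|k|^{-(d+3)/2})$ hold uniformly in $\alpha$. Consequently all the $L^2$ estimates \eqref{bsup}, \eqref{bsup1}, \eqref{binf} and the passage to $\check\Delta$ and then to $\Delta'$ in \eqref{OnlyRes} go through with the same constants, now with $\alpha$ integrated against $\nu$ alongside the Lebesgue variables; the only change is that the phase in the numerator is $2\pi(rP_\alpha(k) - (d-1)/8 + (k,x))$ and the amplitude carries $K_\alpha^{-1/2}(k/|k|)$.

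Next I would carry over Section \ref{ScGeom}: the Dani correspondence, the identification of $\hat S(N,\a)$ with short vectors in $L(N,\a) = g_{\ln N}\Lambda_\a \Z^{d+1}$, and Lemma \ref{sec.shortestvectors} are completely independent of the convex body, so they apply unchanged. The reduction in Proposition \ref{finalreduction} then produces the sum $2\sum_{p\ge 1}\sum_{m\in\cZ_\eps} q(r,\a,x,N,m,p)$ with the body-dependent pieces being $K_\alpha^{-1/2}(X_m/R_m)$ and the oscillating factor $\sin(2\pi(rN^{1/d}pP_\alpha(X_m) - (d-1)/8))$ inside $d_r(N,m,p)$, and with $P$, $K$ replaced by $P_\alpha$, $K_\alpha$ throughout. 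The equidistribution statement \eqref{EquidWu2} for long horocycle pieces is again body-independent, and I would upgrade it to include the extra independent coordinate $\alpha\sim\nu$: since $\alpha$ does not enter $L(N,\a)$, the joint law of $(e_1(L(N,\a)),\dots,e_{d+1}(L(N,\a)),\a,\alpha)$ converges to $\mu \times \mathrm{Leb}_{\T^d}\times\nu$.

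The substantive point is the analogue of Proposition \ref{prop.independence}: I must show that the variables $A_{m,\alpha} := N^{1/d}P_\alpha(X_m)r$, for $m\in\cZ_\eps$, become jointly equidistributed on $\T^{\cZ_\eps}$ and independent of everything else, \emph{for $\nu$-a.e.\ fixed $\alpha$}. Following Section \ref{secsec52}, this reduces to the rational-independence statement: for any $m_1,\dots,m_K\in\cZ$ and integers $l_1,\dots,l_K$ not all zero, $\mu(L : \sum_i l_i P_\alpha(X_{m_i}(L)) = 0) = 0$. Here the key observation is that $P_\alpha$ is still analytic and \emph{not affine} in the radial direction for every $\alpha$ (a convex body's support function restricted to a ray through the origin cannot be affine and strictly positive unless it is constant, which contradicts strict convexity) — so Lemma \ref{analytic} applies to each $P_\alpha$ individually with $L = \mathrm{id}$, giving $f''_\alpha(\delta)\not\equiv 0$. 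Then the $\theta^2$-coefficient argument of Proposition \ref{independence} runs for each fixed $\alpha$ and yields \eqref{ResLat}, hence \eqref{AlmRes}, with $P$ replaced by $P_\alpha$. The rest of Section \ref{SSParts} — the split of the integral \eqref{AsInd2A} according to whether $|\sum_j n_j e_{j,1}|$ is small, integration by parts in $x_1$ when some $n_j\neq 0$, integration in $r$ using \eqref{AlmRes} when some $l_m\neq 0$ — applies after an extra integration $\int_{\T^d}(\cdot)\psi(\alpha)\,d\alpha$, since the a.e.\ statements above combine with dominated convergence to kill the bad $\alpha$-set uniformly.

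Finally, collecting these, the variable $2\sum_{p,m} q(r,\a,x,N,m,p)$ converges, conditionally on $\alpha$, to the random variable $\cL_{\cC_\alpha}(L,\theta,b)$ evaluated on $(M,T^\infty)$ distributed according to $\mu$, with $\alpha$ itself distributed according to $\nu$ and independent of $(L,\theta,b)$; letting $\eps\to 0$ and using the almost-sure convergence of the defining series (Section \ref{L}, which applies to $\cC_\alpha$ for each $\alpha$) gives the stated formula $\mu\times\nu\{(L,(\theta,b),\alpha) : \cL_{\cC_\alpha}(L,\theta,b)\le z\}$. I expect the only place needing genuine care to be the uniformity of the estimates over $\alpha$: one must check that the implied constants in the Herz asymptotics \eqref{fourier.strictly.convex}–\eqref{fourier.strictly.convex.symmetric} and in all subsequent $L^2$ bounds depend only on finitely many $C^\nu$-norms of $P_\alpha, K_\alpha$, which are bounded uniformly in $\alpha$ by the assumed joint analyticity on the compact set $\T^d\times\Sp^{d-1}$; this is routine but must be stated. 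The main conceptual obstacle — the rational independence of $\{P_\alpha(X_{m_i})\}$ — is handled exactly as in the fixed-body case because the argument there only used that $P$ restricted to a line is analytic and non-polynomial, a property each $P_\alpha$ retains.
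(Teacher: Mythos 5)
Your proposal rests on a misreading of the statement that invalidates the central step. In Theorem \ref{ThParam} the parameter of the convex body and the translation vector are the \emph{same} variable: the random triple is $(r,x,\a)$ and the discrepancy is $D_{\cC_\alpha}(r,\a,x,N)$ with $\alpha=\a$ (this is exactly the situation needed for the application to slanted cylinders in Theorem \ref{ThCyl}, where the body $\cC_\alpha$ is determined by the direction of the cylinder). You instead treat $\alpha$ as an extra independent coordinate — "since $\alpha$ does not enter $L(N,\a)$, the joint law of $(e_1(L(N,\a)),\dots,e_{d+1}(L(N,\a)),\a,\alpha)$ converges to $\mu\times\mathrm{Leb}_{\T^d}\times\nu$" — and then propose to condition on $\alpha$ and run the fixed-body argument "for $\nu$-a.e.\ fixed $\alpha$." But $\alpha$ \emph{does} enter $L(N,\a)$, since it is $\a$; there is no extra coordinate to condition on, and if you literally freeze $\a$ then $L(N,\a)$ is a deterministic sequence and equidistribution disappears entirely. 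With $\alpha$ and $\a$ decoupled the theorem would be a trivial integration of Theorem \ref{main.limit} over $\alpha$; the whole point of Theorem \ref{ThParam} is the coupling.

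The paper resolves this with a continuity argument rather than conditioning: one keeps $\a$ as the single equidistributing variable, divides $\T^d$ into small cubes $\bC_s$, fixes a representative $\a_s\in\bC_s$, and uses analyticity of $(\alpha,v)\mapsto P_\alpha(v)$ to show that if the cubes are small enough, $\bigl|\sum_i l_i P_\a(X_{m_i}(L(N,\a)))\bigr|<\eps$ on $\bC_s$ forces $\bigl|\sum_i l_i P_{\a_s}(X_{m_i}(L(N,\a)))\bigr|<2\eps$. This reduces the varying-body non-resonance statement \eqref{AlmRes22} to finitely many applications of the fixed-body statement \eqref{AlmRes}, one for each $\a_s$, and is the ingredient your plan is missing. (Your observations that the Herz bounds are uniform in $\alpha$ by compactness of $\T^d\times\Sp^{d-1}$, and that each $f_{\alpha}$ is non-polynomial, are correct and are indeed used; but they feed into the cube-decomposition argument, not into a conditioning on $\alpha$.)
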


\begin{proof}
The proof is similar to the proof of Theorem \ref{main.limit} so we only describe the necessary modifications.
Note that either for all $\alpha$,  $\cC_\alpha$ has a center of symmetry 
or the set of $\alpha$s such that $\cC_\alpha$ has a center of symmetry has measure 0. We consider the first
case the second case is similar. We also suppose that the centers of symmetry of all $\cC_\alpha$ are at the
origin (this can be always achieved by shifting $x$). Now the argument proceeds in the same way as the proof
of Theorem  \ref{main.limit} in the symmetric case except that Proposition \ref{prop.independence}
has to be straightened  as follows.

\begin{prop} 
\label{prop.independence2}
The random vectors
$$
\left((e_1(N,\a),\ldots, e_{d+1}(N, \a)), \a, 
\{\gamma_j\}_{j=1}^{d+1}, \quad
\{N^{\frac{1}{d}} P_\a(X_m) r \}_{m\in \cZ_\eps} \right)
$$
converge in distribution as $N\to\infty$ to $\mu_d\times \nu\times \lambda_{d, \eps}.$
\end{prop}
The proof of Proposition \ref{prop.independence2} proceeds in the same way as the
proof of Proposition \ref{prop.independence} except that \eqref{AsInd} has to be replaced by
\begin{equation}
\label{AsInd2} 
 \iiint  \Phi(e_1(N, \a),\dots, e_{d+1}(N, \a))
\end{equation}
$$ \times  \exp\left[ 2\pi i \left(\sum_{j=1}^{d+1} n_j \gamma_j +\sum_{\cZ_\eps} l_m A_m\right)\right] \psi(\a) 
dx d\a dr \to $$
$$ \int_{\T^d} \psi(\a)  d\a
\int_{M} \Phi(e_1(L),\dots e_{d+1}(L)) d\mu(L) $$
$$\times \int_{\T^{d+1}} e^{\sum_j n_j \gamma_j} d\gamma
\int_{\T^{\Z_\eps}} e^{\sum_m l_m A_m} dA \text{ as } N\to \infty.$$
To prove \eqref{AsInd2} note that the case when $n_j\equiv 0$ and $l_m\equiv 0$ reduces to 
\eqref{EquidWu2}. The case when some $n_j\neq 0$ is handled as in Proposition \ref{prop.independence}. Finally the case when 
$n_j \equiv 0$ but some $l_m \neq 0$ is similar to Proposition  Proposition \ref{prop.independence}  except that \eqref{AlmRes} now takes form 
\begin{equation}
\label{AlmRes22}
\mes\left(\a\in \T^d: \left|\sum_{i=1}^K l_i P_\alpha (X_{m_i}(L(N,\a)))\right|<\eps\right)\to 0 \text{ as } N\to\infty.
\end{equation}
To derive \eqref{AlmRes22} from \eqref{AlmRes} divide $\T^d$ into small cubes $\bC_s$ and for each $s$
pick $\a_s\in \bC_s.$ If the size of cubes is small enough then for $\a\in \bC_s$ the inequality
$$ \left|\sum_{i=1}^K l_i P_\alpha (X_{m_i}(L(N,\a)))\right|<\eps $$
holds provided that 
$$ \left|\sum_{i=1}^K l_i P_{\a_s} (X_{m_i}(L(N,\a)))\right|<\frac{\eps}{2}.  $$
Hence \eqref{AsInd2} follows from \eqref{AlmRes}.
\end{proof}

\subsection{Counting lattice points in slanted cylinders.}
\label{SSSlanted}
Given $v\in\R^{d+1},$ $r\in\R$ consider the cylinder 
\begin{equation}
\Cyl_{y,v,r, T}=\{z\in\R^{d+1}: |z-(y+tv)|<r \text{ for some } t\in [0,T]\}. 
\end{equation}
Let $N(y,v,rT)$ be the number of $\Z^{d+1}$ points in $\Cyl_{y,v,r,T}$ and
\begin{equation}
 \bbD(y,v,r,T)=N(y,v,r,T)-\Vol(\Cyl_{y,v,r, T}).
\end{equation}

We assume that $y=(x,0)$ and $v=(\a, 1)$ where $x, \a\in \R^d. $

\begin{theo}
\label{ThCyl}
If $b$ is sufficiently small then
$$\mathscr{D}(z)=\lim_{T \to \infty} \brlambda \{ (r, x, \a) \in [a,b] \times \T^d\times \T^d 
\Big| \frac{\bbD(y, v, r, T)}{r^{\frac{d-1}{2}} T^{\frac{d-1}{2d}} } \leq z\} $$
exists. An explicit formula for $\mathscr{D}$ is given by \eqref{BodyLat} and
\eqref{LimDistLat}.
\end{theo}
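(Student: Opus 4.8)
The plan is to reduce the lattice-point count $N(y,v,r,T)$ to a discrepancy of a Kronecker sequence of the type treated in Theorem \ref{main.limit}, and then to invoke that theorem (or, more precisely, its extension Theorem \ref{ThParam} to parametric families of convex bodies). First I would slice the slanted cylinder $\Cyl_{y,v,r,T}$ by the hyperplanes $\{z_{d+1}=n\}$, $n\in\Z$. Writing $v=(\a,1)$, the intersection of $\Cyl_{y,v,r,T}$ with the hyperplane $z_{d+1}=n$ is (for $n$ in the appropriate range $0\le n\le T$, up to two spherical caps of bounded total contribution at the two ends) a $d$-dimensional ball of radius $\sqrt{r^2 - ((\text{transverse offset}))^2}$ centered at $x+n\a$; after accounting for the tilt of $v$, this is an ellipsoid $\cC_\a$ depending analytically on $\a$, rescaled by $r$, translated by $x+n\a \bmod \Z^d$. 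Hence $N(y,v,r,T)=\sum_{n=0}^{T-1}\chi_{(\cC_\a)_r}(x+n\a) + (\text{boundary terms})$, and the volume term $\Vol(\Cyl_{y,v,r,T})$ matches $T\,\Vol((\cC_\a)_r)$ up to lower order. Thus
\begin{equation}
\bbD(y,v,r,T) = D_{\cC_\a}(r,\a,x,\lfloor T\rfloor) + O(r^{\frac{d-1}{2}})
\end{equation}
where the error collects the two end caps and the discretization of $T$, both negligible after normalization by $r^{\frac{d-1}{2}}T^{\frac{d-1}{2d}}$.

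The second step is to check that the family $\{\cC_\a\}_{\a}$ is an analytic family of strictly convex bodies in the sense required by Theorem \ref{ThParam}: the support function $P_\a$ and the curvature $K_\a$ of the tilted round ball are explicit analytic functions of $\a$ on a neighborhood of the (compact, origin-avoiding) range of $\a$, and for $b$ small enough the rescaled bodies $(\cC_\a)_r$ fit inside the unit cube uniformly in $\a$, which is exactly why the hypothesis ``$b$ sufficiently small'' appears. Strict convexity and analyticity of each $\cC_\a$ are clear since it is a Euclidean ball viewed in skew coordinates, i.e. an ellipsoid. Then Theorem \ref{ThParam} applies verbatim with $\brlambda=\nu\times\mathrm{Leb}_{[a,b]\times\T^d}$, $\nu$ being the law of $\a$ (here Lebesgue on $\T^d$, with density $\psi\equiv 1$), and gives
\begin{equation}
\mathscr{D}(z)=\mu\times\nu\left\{(L,(\theta,b),\a)\in\cM_d\times\T^d : \cL_{\cC_\a}(L,\theta,b)\le z\right\},
\end{equation}
which is the asserted explicit formula \eqref{BodyLat}--\eqref{LimDistLat} once one substitutes the explicit $P_\a$, $K_\a$ into the series \eqref{LimSumSym}.

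The main obstacle, and the step deserving genuine care rather than routine bookkeeping, is the passage from the continuous cylinder count to the Kronecker discrepancy: one must show that the contribution of the two spherical end caps of $\Cyl_{y,v,r,T}$, together with the lattice points lying within $O(1)$ of the ``lids'' $z_{d+1}\in\{0,T\}$, is $O(r^{\frac{d-1}{2}})$ uniformly, hence asymptotically negligible after dividing by $r^{\frac{d-1}{2}}T^{\frac{d-1}{2d}}\to\infty$; and that rounding $T$ to $\lfloor T\rfloor$ changes the sum by at most one term $\chi_{(\cC_\a)_r}(x+\lfloor T\rfloor\a)=O(1)$, again negligible. One also needs that the defining inequality $|z-(y+tv)|<r$ for some $t\in[0,T]$ translates, slice by slice, into membership of $x+n\a$ in a body that genuinely does not depend on $n$ (it does not, by translation invariance along $v$), so that the sum is a bona fide ergodic sum for the rotation by $\a$; this is where the special form $y=(x,0)$, $v=(\a,1)$ is used. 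Once these reductions are in place the result is immediate from Theorem \ref{ThParam}, and no new dynamical input beyond what was already developed in Sections \ref{ScNonRes}--\ref{sec.oscill} is needed.
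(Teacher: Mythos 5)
Your proof is correct and follows the same route as the paper's: slice the slanted cylinder by the hyperplanes $\{z_{d+1}=n\}$, observe that each slice is an ellipsoid $r\cC_\alpha$ translated to $x+n\alpha \bmod \Z^d$, so that the count reduces to a Kronecker discrepancy for the parametric family $\{\cC_\alpha\}$, and then invoke Theorem~\ref{ThParam}. One small slip worth flagging: your first description of the slice as ``a $d$-dimensional ball of radius $\sqrt{r^2-(\text{transverse offset})^2}$'' is not correct — the $n$-slice of a cylinder whose axis is tilted relative to the slicing hyperplane is a full-size ellipsoid (the cross-section stretched by $1/\cos$ of the tilt angle in the axis direction), not a ball of reduced radius; you recover the right picture in the next clause (``this is an ellipsoid $\cC_\alpha$''), and the explicit quadratic inequality $(\alpha^2+1)|y|^2-(\alpha,y)^2\leq(\alpha^2+1)r^2$ that defines $\cC_\alpha$ in the paper's \eqref{BodyLat} is exactly what ``elementary geometry'' gives. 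Your additional care about the end caps, the discretization $T\to\lfloor T\rfloor$, and the role of ``$b$ small'' is compatible with the paper, which just says ``edge effects contribute $\cO(1)$''; both bounds are $o\bigl(r^{\frac{d-1}{2}}T^{\frac{d-1}{2d}}\bigr)$, so either suffices. The family $\{\cC_\alpha\}$ is indeed analytic in $\alpha$ (ellipsoids with support function and curvature explicit in $\alpha$), so Theorem~\ref{ThParam} applies and yields \eqref{LimDistLat}.
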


\begin{proof}
We are interested in the question under which condition the point
$\bm=(m_1, m_2,\dots, m_d, n)$ belongs to $\Cyl_{y,v,r,T}.$ Since edge effects contribute
$\cO(1)$ we may assume that $0\leq n\leq T.$ The plane $\{z_{d+1}=n\}$ intersects $\Cyl_{y,v,r,T}$
by an ellipsoid centered at $(x+n\alpha, n).$ Now elementary geometry shows that
$\bm\in \Cyl_{y,v,r,T}$ iff 
$$ (\alpha^2+1)|x_n-\brm|^2-(\a, x_n-\brm)^2\leq (\alpha^2+1) r^2$$
where $x_n=x-n\a,$ $\brm=(m_1,\dots, m_d).$  
The last condition can be restated by saying that $x+n\alpha$ mod $\Z^d$ belongs to
$r \cC_\alpha$ where 
\begin{equation}
\label{BodyLat}
\cC_\alpha=\{y \big| (\alpha^2+1)|y|^2-(\a, y)^2\leq (\alpha^2+1) \}
\end{equation}
Hence Theorem \ref{ThCyl} follows from Theorem \ref{ThParam} and
\begin{equation}
\label{LimDistLat}
\mathscr{D}(z)=\mu\times \nu\left\{ (L, (\theta, b), \a)\in \cM_d \times \T^d  : \cL_{\cC_\alpha}(L,\th, b) \leq z \right\}. \qedhere
\end{equation}
\end{proof}

\subsection{Proof of Theorem \ref{ThCont}(b) and Proposition \ref{prop.theorem3b}.}
\label{SSCont}
In this section we describe the proof of Theorem \ref{ThCont}(b). 
We only treat the case of a symmetric convex body. 
In Section \ref{SSRandGeo} we show that, in the case of balls, the limit distribution does not depend 
on the distribution $p$ of the translation vector.
 The argument is very similar to the proof of Theorem
\ref{ThParam} so we only give an outline of the proof.  
We have
\begin{equation}
\bD(r,v, x, T)=\sum_{k\in\Z^2-0} c_k \frac{\cos[2\pi(k,x)+\pi(k, Tv)] \sin(\pi(k, Tv))}{\pi (k,v)}
\end{equation}
where $c_k$ is given by formula \eqref{fourier.strictly.convex.symmetric}. Similarly to Section
\ref{ScNonRes} we show that it suffices to restrict our attention to the harmonics satisfying
\begin{equation} \eps<\frac{|k|}{T^{1/(d-1)}}<\eps^{-1}, \end{equation}
\begin{equation}
\label{ResCT}
\delta<T|(k,v)|<\delta^{-1}.
\end{equation}
Divide the support of $p$ onto small sets $\Omega_j$ such that on each $\Omega_j$, $v$ is almost constant.  
Fix one $\Omega_j$ and denote $\brv$ for an arbitrary choice of a point in $\Omega_j$.  Changing the indices if necessary we may assume that on $\Omega_j$, $v_d\neq 0$
so that we can write 
\begin{equation}
\label{Polar}
v=\rho (\alpha_1, \alpha_2\dots \alpha_{d-1}, 1), \quad \brv=\bar{\rho} (\bralpha_1, \bralpha_2\dots \bralpha_{d-1}, 1)
\end{equation}
Denote 
$M={\rm SL}(d,\R)/{\rm SL}(d,\Z)$. We let 
$$ g_n=\left(\begin{array}{cccc} e^{-n/(d-1)} & 0 & \ldots & 0 \cr
                                                            0 & e^{-n/(d-1)} & 0 & \ldots  \cr
                                                            & & & \cr
                                                            0 & \ldots & e^{-n/(d-1)}  & 0 \cr 
                                                            0 & \ldots & 0 & e^n  \cr
                                                             \end{array}\right), \quad
\Lambda_\alpha=\left(\begin{array}{cccc} 1 & 0 & 0 & \ldots \cr
                                                            0 & 1 & 0 & \ldots  \cr
                                                            \cr
						   \a_1 & \ldots & \a_{d-1} & 1 \cr
 \end{array}\right) . $$
Consider the lattice $L(T,\a)=g_{\ln T} \Lambda_\alpha \Z^{d}.$ Then
$$(X_1,\ldots,X_{d-1},Z) :=(k_1/T^{1/(d-1)},\ldots,k_{d-1}/T^{1/(d-1)},{T(k,\a)})=g_{\ln T} \Lambda_\alpha k$$
Due to \eqref{ResCT} we have
\begin{equation}
 \frac{k_d}{T^{1/(d-1)}}\approx -\sum_{s=1}^{d-1} \alpha_s X_s
\end{equation}
and hence
\begin{equation}
 |k|^{(d+1)/2}\approx T^{\frac{d+1}{2(d-1)}} \left[\sum_{s=1}^{d-1} X_s^2+\left(\sum_{s=1}^{d-1} \alpha_s X_s\right)^2
\right]^{\frac{d+1}{4}}. 
\end{equation}
The rest of the proof of Theorem \ref{ThCont}(b) proceeds similarly to the proof of Theorem \ref{main.limit}.  Namely, on $\Omega_j$ the distribution of $\bD(r,v,x,T)$ is approximated by the following distribution 
\begin{equation} \label{dist.dimd1} \sD_{\cC,\brv} (z) =  \mu\left\{ (L, (\theta, b))\in \cM_d : \sL_\brv(L,\th, b) \leq z \right\} \end{equation} 
where
\begin{multline} \nonumber \sL_{\brv}(L, \theta, b)= \\ \frac{2}{\pi^2} \sum_{m\in \cZ}\sum_{p=1}^\infty
  K^{-\frac{1}{2}}(X_m/R_m)  \frac{\cos(2\pi p(m,\theta)) \sin(2\pi p b_m) \sin (\pi p \bar{\rho} Z_m)}  
{  p^{\frac{d+3}{2}}  \bar{\rho} Q_m^{\frac{d+1}{2}} Z_m}. 
\end{multline}
Here $X_{m,s}$ is the $s$-th component of $X_m$, $R_m^2=\sum_{s=1}^{d-1} X_{m,s}^2$, and
$$Q_m^2=R_m^2+\left(\sum_{s=1}^{d-1} \bar{\alpha}_s X_{m,s} \right)^2.$$
By refining the division of the support of the distribution $p$ into smaller and smaller sets $\Omega_j$ we get that the limiting distribution  
$\bD(r,v,x,T)$ is given by $\int \sD_{\cC,v} (z) p(v) dv. 
\hfill \Box$



\subsection{Random geodesics on the torus.}
\label{SSRandGeo}
Let $\gamma_{x,v}(t)$ denote the geodesic $x+vt$ on $\T^d.$ Given $y, r$ let 
$\tau(r,v,x,y,T)$ denote the time $\gamma_{x,v}(t)$ spends inside $B(y,r)$ for $t\in [0,T]$.
Suppose that $y$ is fixed while $(r,v,x)$ are distributed according to 
the measure $\sigma$ as in Theorem \ref{ThCont}.

\begin{theo} \label{th8}
Suppose that $\rho<\frac{\sqrt{d}}{2}.$ Then

(a) If $d=2$ then the distribution of $\tau(r,v,x,y,T)-\Vol(B(y,r))T$ approaches a limit as $T\to\infty.$

(b)  If $d\geq 4$ then
\begin{equation}
\label{LimGeo}
\lim_{T\to\infty}
\sigma\left(\frac{|v|^{\frac{d+1}{2(d-1)}}}{r^{\frac{d-1}{2}}} \left( \frac{\tau(r,v,x,y,T)-\Vol(B(y,r))T}{T^{\frac{d-3}{2(d-1)}}}\right)
\leq z\right)=\sP(z) 
\end{equation} 
where 
$$\sP (z) =  \mu\left\{ (L, (\theta, b))\in \cM_d : \sL(L,\th, b) \leq z \right\}$$
and 
$$ \sL_{v}(L, \theta, b)=
\frac{2}{\pi^2} \sum_{m\in \cZ}\sum_{p=1}^\infty
 \frac{\cos(2\pi p(m,\theta)) \sin(2\pi p b_m) \sin (\pi p Z_m)}
{  p^{\frac{d+3}{2}}
R_m^{\frac{d+1}{2}} Z_m}. 
$$

\end{theo}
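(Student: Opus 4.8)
The plan is to reduce the geodesic time statistics to the discrepancy of a toral translation along a ball, so that Theorem \ref{ThCont} applies directly. First I would discretize time: write $\tau(r,v,x,y,T)=\int_0^T \chi_{B(y,r)}(\gamma_{x,v}(t))\,dt$ and observe that, up to centering by $\Vol(B(y,r))T$, this is exactly $\bD_{B(y,r)}(v,x,T)$ in the notation preceding Theorem \ref{ThCont}, with $\cC=B(0,1)$ the unit ball (recalling the remark that translates $T_u\cC_r$ give the same limit, so the centre $y$ is irrelevant after shifting $x$). Thus part (a) for $d=2$ is immediate from Theorem \ref{ThCont}(a), and the content of (b) is to (i) import the $d\ge 4$ statement of Theorem \ref{ThCont}(b), and (ii) show that for balls the direction-dependence in Proposition \ref{prop.theorem3b} collapses, i.e. the limit law is independent of $p$ and the formula for $\sL$ simplifies as claimed.

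For step (ii) the key observation is that for a ball $K\equiv 1$, so the curvature factors $K^{-1/2}(X_m/R_m)$ disappear, and $P(t)=|t|$ for the Euclidean unit ball, so $P(X_m)=R_m$ and $P(-X_m)=R_m$; in particular the body is symmetric, which is why only the symmetric formula \eqref{dist.dimd2} is relevant. The remaining task is to handle the anisotropic denominator $Q_m^2 = R_m^2 + (\sum_s \alpha_s X_{m,s})^2$ appearing in \eqref{dist.dimd2} and show that after integrating over the distribution of the direction $v=\rho(\alpha_1,\dots,\alpha_{d-1},1)$ one may replace $Q_m$ by $R_m$ at the level of the distribution of the whole series. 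The natural route is a change of variables in the space of lattices: $Q_m$ arises because we wrote $v$ in the chart $v_d\neq 0$ and used $g_{\ln T}\Lambda_\alpha$; applying to $L$ the linear map that skews the $X$-coordinates by $(\alpha_1,\dots,\alpha_{d-1})$ (equivalently absorbing $\Lambda_\alpha$ into $L$) turns $Q_m$ into $R_m$ while preserving Haar measure on $M={\rm SL}(d,\R)/{\rm SL}(d,\Z)$ and leaving $Z_m$, $(m,\theta)$, $b_m$ with their correct (independent, uniform) distributions by Proposition \ref{prop.independence}. One must also track the power of $\rho=|v|$: carrying $\rho$ through the normalization $T^{(d-3)/(2(d-1))}$ together with the substitution $k_d/T^{1/(d-1)}\approx -\sum_s \alpha_s X_s$ produces the prefactor $|v|^{(d+1)/(2(d-1))}/r^{(d-1)/2}$ in \eqref{LimGeo}, and since after the change of variables nothing depends on $v$ anymore, $\int \sD_{\cC,v}(z)p(v)\,dv = \sP(z)$ with $p$ integrating out to $1$.

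The main obstacle I anticipate is making the change of variables rigorous at the level of the infinite series rather than term by term: one needs that the map $L\mapsto$ (skewed $L$) on $M$ intertwines the Haar measure used in Proposition \ref{prop.theorem3b} with the one in the statement, \emph{and} that it is compatible with the labelling of short vectors $e_1(L),\dots,e_d(L)$ and hence with the index set $\cZ$ and the variables $b_m$, $(m,\theta)$ — a priori the shortest-vector basis is not equivariant under such a skew. The clean way around this is to not change coordinates on $M$ directly but rather to redo the reduction of Section \ref{SSCont} with the isotropic choice from the start: since for a ball the relevant condition $\eps<|k|/T^{1/(d-1)}<\eps^{-1}$, $\delta<T|(k,v)|<\delta^{-1}$ is invariant under rotations of $\R^d$, one may rotate so that $v$ points along $e_d$, reducing to $\brv=(0,\dots,0,\rho)$ for which $\alpha=0$ and $Q_m=R_m$ identically; the equidistribution input (Proposition in Section \ref{SSEq}) and Proposition \ref{prop.independence} then give the stated $\sP(z)$ with no $v$-dependence, and refining the partition $\{\Omega_j\}$ as at the end of Section \ref{SSCont} finishes the proof. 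The constraint $\rho<\sqrt{d}/2$ is exactly what guarantees $B(y,r)$ (or rather the relevant cross-sections of the cylinder swept out in the covering-space picture) fits inside the unit cube so that Theorem \ref{ThCont} applies, and should be verified as a preliminary geometric remark.
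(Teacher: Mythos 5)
Your overall structure is right — reduce to Theorem~\ref{ThCont}(b) via Proposition~\ref{prop.theorem3b}, note that for the ball $K\equiv1$ kills the curvature factors and the body is symmetric, and then show that the $v$-dependence disappears after a measure-preserving linear change of variables in the space of lattices. Your final paragraph also correctly flags the one genuine subtlety, namely that the shortest-vector basis is not equivariant under a linear map of $\R^d$. But the two concrete implementations you offer both differ from what the paper does, and the first one does not actually work.

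Your first route proposes a \emph{shear} of the $X$-coordinates by $(\alpha_1,\dots,\alpha_{d-1})$, ``absorbing $\Lambda_\alpha$ into $L$.'' No shear of the $X$-block alone can turn $Q_m^2=|X_m|^2+(\sum_s\alpha_s X_{m,s})^2=X_m^\top(I+\alpha\alpha^\top)X_m$ into $R_m^2=|X_m|^2$, because $\det(I+\alpha\alpha^\top)=1+|\alpha|^2=|v|^2/\rho^2>1$, whereas a shear has determinant $1$; the mismatch of volume must be compensated by the $Z$-coordinate. The paper instead first rotates the $X$-plane (an element of ${\rm SO}(d-1)$) to make $\alpha=(a,0,\dots,0)$, so that $Q_m^2=\frac{|v|^2}{\rho^2}X_{m,1}^2+\sum_{s\ge2}X_{m,s}^2$, and then applies the \emph{diagonal} map $\brX_1=\frac{|v|}{\rho}\,|v|^{-1/(d-1)}X_1$, $\brX_s=|v|^{-1/(d-1)}X_s$ for $s\ge2$, $\brZ=\rho Z$, which has determinant $1$. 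Under this map $Q_m^{(d+1)/2}=|v|^{(d+1)/(2(d-1))}\,\brR_m^{(d+1)/2}$ and $\rho Z_m=\brZ_m$, so that $\brsL_v=|v|^{(d+1)/(2(d-1))}\sL_v$ becomes exactly the $v$-free expression in the statement; the $|v|$-prefactor in \eqref{LimGeo} is produced here, not via the lattice rescaling you describe. Your second route (rotate all of $\R^d$ so $v\parallel e_d$, forcing $\alpha=0$ and $Q_m=R_m$ identically) is a viable alternative, but it replaces $\Z^d$ by a rotated lattice and therefore requires rerunning the equidistribution argument of Section~\ref{SSEq} for $g_{\ln T}\Lambda_{\alpha}$ acting on $R\Z^d$ — true, but an extra step — whereas the paper's change of variables happens only in the limit formula and needs no new equidistribution input.

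As for the equivariance worry: it is legitimate and, implicitly, both proofs rely on the fact that the level-set measure of $\sL_v$ is insensitive to replacing $e(L)$ by $Ae(L)$ when $L$ is Haar random. The reason is that $e_i(AL)=\sum_j B_{ij}Ae_j(L)$ for some $B=B(L,A)\in{\rm GL}(d,\Z)$, so $(m,e(AL))=A(B^\top m,e(L))$, i.e.\ the effect of $A$ on the family $\{(X_m,Z_m)\}_{m\in\cZ}$ is a relabelling of $\cZ$ by $B^\top$ followed by the fixed map $A$. The formula for $\sL_v$ is symmetric under a simultaneous relabelling $m\mapsto B^\top m$, $\theta\mapsto B^{-1}\theta$, $b_m\mapsto b_{B^\top m}$, and Haar measure on $\T^{d}\times\T^{\cZ}$ is invariant under this action, so the distribution of the sum is unchanged. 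This is the step the paper summarises as ``the distribution of $\sL_v$ is invariant under unimodular linear transformations''; you are right that it deserves a sentence. Finally, the hypothesis $\rho<\tfrac{\sqrt d}{2}$ is a constraint on the speed $\rho=v_d$ and not on the ball radius; your interpretation ``$B(y,r)$ fits inside the unit cube'' is not what it controls, and in any case the paper does not use it in the proof as written, so I would not present a guessed explanation as if it were established.
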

\begin{proof}
The existence of the limiting distribution follows immediately from 
Theorem \ref{ThCont}
(with $\cC=B(y,r)).$ It remains to show that the limit does not depend on the distribution of $v.$
Due to proposition \ref{prop.theorem3b} we have the following expression
for $\sP(z)$
$$ \sP(z)=\int \brsD_v(z) p(v) dv $$
where 
$$\brsD_{v} (z) =  \mu\left\{ (L, (\theta, b))\in \cM_d : \brsL_v(L,\th, b) \leq z \right\}.$$
Here $\brsL=|v|^{(d+1)/2(d-1)} \sL$ where 
$\sL_{v}$ is the same as in  \eqref{dist.dimd2} but specified to balls 
$$ \sL_{v}(L, \theta, b)= \frac{2}{\pi^2} \sum_{m\in \cZ}\sum_{p=1}^\infty
  \frac{\cos(2\pi p(m,\theta)) \sin(2\pi p b_m) \sin (\pi p \rho Z_m)}
{ p^{\frac{d+3}{2}} \rho Q_m^{\frac{d+1}{2}}  Z_m }. 
$$
Here $Q_m$ denotes
$$Q_m^2=\sum_{s=1}^{d-1} X_{m,s}^2+\left(\sum_{s=1}^{d-1} \alpha_s X_{m,s} \right)^2$$
and $X_{m,s}$ is the $s$-th component of $X_m.$
It remains to show that 
$\sD_{v}$ does not in fact depend on $v.$ We can choose coordinates in $\R^d$ so that
$\alpha_1=a,$ $\alpha_s=0$ for $s=2\dots d-1.$ Then
$$ Q_m^2=\frac{v^2}{\rho^2} X_{m,1}^2+X_{m,2}^2+\dots+X_{m, d-1}^2. $$
Note that the distribution of $\sL_v$ is invariant under unimodular linear transformations.
Therefore we can make the change of variables
$$ \brX_1=\frac{|v|}{\rho} \frac{X_1}{|v|^{1/(d-1)}}, \quad 
\brX_s=\frac{X_s}{|v|^{1/(d-1)}},  \text{ for } s=2\dots d-1, \quad \brZ=\rho Z. $$
Then
\begin{equation}
 \brsL_{v}(L, \theta, b)=
\frac{2}{\pi^2} \sum_{m\in \cZ}\sum_{p=1}^\infty
 \frac{\cos(2\pi p(m,\theta)) \sin(2\pi p b_m) \sin (\pi p \brZ_m)}
{  p^{\frac{d+3}{2}}
\brR_m^{\frac{d+1}{2}} \brZ_m}
\end{equation}
where
$$\brR_m^2=\sum_{s=1}^{d-1} \brX_{m,s}^2. $$
Since the RHS does not depend on $v$ the result follows.
\end{proof}

\section{Proof of Theorem \ref{ThCont} ($a$) and Proposition \ref{prop.theorem3a}}
\label{Contd2}
\begin{proof}
We have 
\begin{equation}
\label{CD2}
\bD_\cC(v, x, T)=\sum_{k\in\Z^2-0} c_k e^{2\pi i (k, y)} \frac{\sin(\pi(k, Tv))}{\pi (k,v)}
\end{equation}
where $y=x+\frac{vT}{2}$ and $c_k=\cO(|k|^{-3/2}).$ Note that for each $\eps$ for almost all $v$ there exists $n=n(v)$ such that
$|(k,v)|>|k|^{-1-\eps}$ for $|k|>n.$ Hence if $A_n =\left\{ v : |(k,v)|>|k|^{-1-\eps} \  Ê{ \rm Êfor Ê} \   |k|>n \right\}$, it holds that $  |A_n^c| \to 0$ as $n \to \infty$. Define 
\begin{equation}
\bD^n_+(v,x,T)=\sum_{|k|>n} c_k e^{2\pi i (k, y)}\frac{\sin(\pi(k, Tv))}{\pi (k,v)}, 
\end{equation}
\begin{equation}
\bD^n_-(v,x,T)=\sum_{|k|\leq n} c_k e^{2\pi i (k, y)}\frac{\sin(\pi(k, Tv))}{\pi (k,v)}.
\end{equation}

Let 
\begin{equation}
 A_{k,p}=\{v:     |(k,v)|\in [p |k|^{-1-\eps}, (p+1) |k|^{-1-\eps}] \}
 \end{equation}
then $ |A_{k,p}| \leq C |k|^{-2-\eps}$
and so 
\begin{equation}
 ||\bD^n_+ 1_{A_ n}||_{L^2(\bar \sigma)}^2\leq C\sum_{|k|>n} \sum_{p=1}^\infty
\frac{|k|^{\eps}}{ |k|^3 p^2}\leq C n^{-(1-\eps)}. 
\end{equation}
Accordingly, the distribution of $\bD_\cC$ is well approximated by the distribution of
$\bD^n_-$ if $n$ is large enough. On the other hand for each fixed $n$ the distribution 
of $\bD_-^n$ converges to a limit as $T\to\infty.$ Indeed remove a small neighborhood of
resonances and divide the remaining set into sets $\Omega_i$ of small diameter.
Then on each $\Omega_i$ the denominators in \eqref{CD2} are almost constant while
$\pi vT$ becomes uniformly distributed on $(\R/2\pi \Z)^2.$ 
Therefore the distribution of 
\begin{equation}
\label{LimDis2} 
\lim_{T\to\infty} \bar \sigma(\bD_\cC(v, x, T)<z)=
\int  \brsD_{\cC, v} (z) p(v) dv  
\end{equation}
where 
\begin{equation} 
\brsD_{\cC, v} (z)={\rm Leb} \left\{ (y,\theta) \in \T^2 \times \T^2 :  \cL_{v} (y,\theta) < z\right\}, 
\end{equation}
\begin{equation}
\label{LimDisFC}  
\cL_{v} (y,\theta) =\sum_{k\in\Z^2-0} c_k e^{2\pi i (k, y)} \frac{\sin(\pi(k, \theta))}{\pi (k,v)}  . 
\qedhere \end{equation}
\end{proof}

\begin{remark}
The fact that $c_k$ are Fourier coefficients of the indicator of $\cC$ is not important
in the above argument, only the rate of decay was used. Therefore the same argument 
gives the following statement
\begin{prop}
If $A$ belongs to a Sobolev space $\mathbb{H}^s$ for $s>\frac{d}{2}-1$ then
$$ \int_{0}^{T} A(S_v^t x) dt - T \int_{\T^d} A(x) dx$$
converges in distribution as $T\to\infty.$ The limiting
distribution is given by \eqref{LimDis2}--\eqref{LimDisFC} where $c_k$ are Fourier coefficients 
of $A.$
\end{prop}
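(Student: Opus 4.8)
The plan is to transcribe the proof of Theorem~\ref{ThCont}(a), keeping track only of the exponents. Expanding $A$ in its Fourier series and integrating along the flow one gets, exactly as in \eqref{CD2},
$$ \int_{0}^{T} A(S_v^t x)\, dt - T\int_{\T^d} A = \sum_{k\in\Z^d-0} c_k\, e^{2\pi i(k,y)}\,\frac{\sin(\pi(k,Tv))}{\pi(k,v)},\qquad y=x+\tfrac{Tv}{2}, $$
where $c_k$ are the Fourier coefficients of $A$. The only features of $A$ that enter the argument are the $\ell^2$ decay $\sum_k |c_k|^2|k|^{2s}<\infty$ coming from $A\in\mathbb{H}^s$, and a Diophantine lower bound on $|(k,v)|$; in particular the explicit shape of $\cC$ played no role in the proof of Theorem~\ref{ThCont}(a) beyond giving $c_k=\cO(|k|^{-3/2})$, which is what the Sobolev hypothesis now replaces.

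For the Diophantine input I would fix $\eta>0$ (to be chosen at the end) and observe that for a.e.\ $v$ in the compact, origin-avoiding support of $p$ there is $n(v)$ with $|(k,v)|>|k|^{-(d-1)-\eta}$ for all $|k|>n(v)$: in a fixed compact set the slab $\{|(k,v)|<|k|^{-(d-1)-\eta}\}$ has measure $\lesssim |k|^{-d-\eta}$, and $\sum_{|k|>n}|k|^{-d-\eta}\to0$, so Borel--Cantelli applies and moreover $A_n:=\{v:\ |(k,v)|>|k|^{-(d-1)-\eta}\ \forall\,|k|>n\}$ satisfies $|A_n^c|\to0$ as $n\to\infty$. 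Splitting the sum at $|k|=n$ into $\bD^n_-$ (over $0<|k|\le n$) and $\bD^n_+$ (over $|k|>n$), and slicing $A_n$ by the dyadic size of $(k,v)$ exactly as in the proof of Theorem~\ref{ThCont}(a), one gets $\int_{A_n}\frac{dv}{(k,v)^2}\lesssim |k|^{d-2+\eta}$, hence by Parseval in $x$ and uniformly in $T$,
$$ \big\|\bD^n_+\,\mathbf{1}_{A_n}\big\|^2_{L^2(\bar\sigma)}\ \lesssim\ \sum_{|k|>n}|c_k|^2\,|k|^{d-2+\eta}. $$
Since $s>\frac d2-1$ we may choose $\eta$ with $d-2+\eta<2s$, so the right-hand side is a tail of the convergent series $\sum_k|c_k|^2|k|^{2s}$ and tends to $0$. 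Together with $|A_n^c|\to0$, this shows that, uniformly in $T$, the distribution of the ergodic integral is approximated (as $n\to\infty$) by that of the finite sum $\bD^n_-$.

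For each fixed $n$, $\bD^n_-$ is a finite trigonometric polynomial whose denominators $(k,v)$ are bounded away from $0$ off the closed measure-zero resonant set $\bigcup_{0<|k|\le n}\{(k,v)=0\}$; removing a small neighbourhood of it and partitioning the remainder into pieces $\Omega_i$ of small diameter, on each $\Omega_i$ the denominators are essentially constant while $Tv$ equidistributes (as $T\to\infty$) on $\T^d$ for a.e.\ $v$, so that $\big((k,x),(k,Tv)\big)_{|k|\le n}$ converges jointly in law to the same quantities at an independent uniform pair $(x,\theta)$. Hence $\bD^n_-$ converges in distribution, as $T\to\infty$, to the law of $\cL^n_v(y,\theta)=\sum_{0<|k|\le n}c_k\,e^{2\pi i(k,y)}\frac{\sin(\pi(k,\theta))}{\pi(k,v)}$. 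The same $L^2$ bound shows $\{\cL^n_v\}_n$ is Cauchy in $L^2$ of the parameter space, hence converges to a limit $\cL_v$ whose law defines $\brsD_{\cC,v}$; a routine three-$\eps$ argument (approximate the integral by $\bD^n_-$ up to $\eta$-error in probability, let $T\to\infty$, then $n\to\infty$) then gives convergence in distribution to $\int\brsD_{\cC,v}(z)\,p(v)\,dv$, i.e.\ the analogue of \eqref{LimDis2}--\eqref{LimDisFC} with $\Z^2,\T^2$ replaced by $\Z^d,\T^d$. Almost sure convergence of the limiting series could be obtained as in Section~\ref{L}, but is not needed here.

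The one delicate point, the \emph{main obstacle}, is matching the Diophantine loss to the Sobolev smoothness in the tail estimate: one must pick $\eta$ small enough that $d-2+\eta<2s$, which is available precisely when $s>\frac d2-1$. Everything else is a direct transcription of the two-dimensional argument.
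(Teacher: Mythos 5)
Your proposal is correct and follows the same route the paper intends: the paper's entire ``proof'' is the one-line remark that the argument for Theorem \ref{ThCont}(a) used only the decay rate of $c_k$, and you have simply carried that argument out for general $d$, replacing the Diophantine exponent $-1-\eps$ by $-(d-1)-\eta$ and the bound $c_k=\cO(|k|^{-3/2})$ by the $\mathbb{H}^s$ hypothesis, correctly matching $\eta<2s-(d-2)$ to the condition $s>\tfrac d2-1$.
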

\end{remark}


\section{Convergence of the function $\cL$} \label{L}
Here we prove that the series defining the limiting distributions in Propositions 
\ref{main3}, \ref{prop.theorem3a}, and \ref{prop.theorem3b} converge almost surely. 
\begin{prop}
The series \eqref{LatTor}, \eqref{LimSumSym}, \eqref{L.dim2},  \eqref{dist.dimd2}, and  \eqref{dist.dimd3} converge almost surely.
\end{prop}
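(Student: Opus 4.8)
The plan is to show that each series converges almost surely by combining a second-moment (variance) estimate with a Borel–Cantelli argument over dyadic blocks, exploiting the fact that the randomness enters through the angular variables $\theta$ (and $b$, $b'$) which make the summands mean-zero and essentially orthogonal. Observe that all five series have the same structure: a sum over $m\in\cZ$ and $p\geq 1$ of a term of the shape
\begin{equation}
\label{genterm}
c_{m,p}(L)\,\Psi_{m,p}(\theta,b,b')\,\frac{\sin(\pi p \omega_m)}{\omega_m\, p^{\frac{d+3}{2}}},
\end{equation}
where $\omega_m$ is $Z_m$, $\rho Z_m$ or $\bar\rho Z_m$, the factor $c_{m,p}(L)$ is bounded by $C\,K^{-1/2}\,R_m^{-\frac{d+1}{2}}$ (respectively $Q_m^{-\frac{d+1}{2}}$), and $\Psi_{m,p}$ is a product of sines/cosines of the angular variables that, for distinct pairs $(m,p)$ with $m$ primitive, are orthonormal in $L^2$ of the torus — this is exactly the content of the reduction in Section \ref{s1} and of Proposition \ref{independence}. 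Hence, conditionally on $L$, the partial sums form an $L^2$-bounded martingale-like object, and it suffices to control the conditional variance and then integrate over $L\in M$.

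First I would fix $L$ in a large compact set $K_\eta\subset M$ (of measure $>1-\eta$ by Mahler compactness) on which $R_m$, $Q_m$ and $|Z_m|$ are bounded below/above by controllable quantities, and estimate the $L^2$-norm (over $\theta,b,b'$) of the tail $\sum_{\|m\|>A}\sum_p$. By orthogonality this tail norm squared is bounded by $C\sum_{\|m\|>A}\sum_{p\geq1} R_m^{-(d+1)}\, p^{-(d+3)}\,\min(1,(p|Z_m|)^{-2})\, Z_m^{-2}$. The $p$-sum converges since $d+3>1$; the delicate point is the $m$-sum: we need $\sum_{m} R_m^{-(d+1)} Z_m^{-2}<\infty$ for a.e. lattice, and more precisely we need the tail in $\|m\|$ to be summable. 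Here I would use the key geometric fact, already exploited in Section \ref{ScGeom}, that $R_m$ and $Z_m$ are the coordinates of $(m,e(L))=\sum m_i e_i(L)$, so $\|(m,e(L))\|\asymp\|m\|$ uniformly on $K_\eta$; thus $R_m^2+Z_m^2\asymp\|m\|^2$, and a standard integration-in-$L$ argument (the second moment of $1/(R_m^{d+1}Z_m^2)$ against Haar measure on $M$ is finite because the singularities $R_m\to0$ and $Z_m\to0$ are integrable in dimension $d+1\geq 3$, and one has the needed decay in $\|m\|$) gives $\E_L\big[\sum_{\|m\|>A}\sum_p(\cdots)\big]\to0$ as $A\to\infty$.

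Having an $L^2$-in-$(\theta,b,b',L)$ tail bound that goes to zero, I would then upgrade $L^2$-convergence to almost-sure convergence by the classical device: pick a deterministic sparse subsequence $A_j$ (e.g. $A_j=2^j$) so that $\sum_j \E\big[\,\|\text{block }[A_j,A_{j+1})\|^2\,\big]<\infty$, apply Borel–Cantelli to conclude that the subsequence of partial sums converges a.s., and finally control the fluctuation within a block by a maximal inequality (Doob, or simply Kolmogorov's maximal inequality for the martingale obtained by successively revealing the angular coordinates in a fixed order) so that $\max_{A_j\leq B<A_{j+1}}$ of the partial-sum increment is also small a.s. Letting $\eta\to0$ removes the restriction to $K_\eta$. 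The non-symmetric cases \eqref{LatTor} and \eqref{dist.dimd3} are identical after noting that $k(p,m,\theta)$ in \eqref{DefK} is a sum of two terms each of the admissible form, and the flow case \eqref{dist.dimd2} only replaces $R_m$ by $Q_m\geq R_m$, which makes the estimates easier; the two-dimensional case \eqref{L.dim2} is the simplest since there the decay $c_k=\cO(|k|^{-3/2})$ together with $\sum_k |k|^{-3}|(k,\theta)|^2$ being finite a.s. gives convergence directly.

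The main obstacle I anticipate is the $m$-sum convergence, i.e. controlling $\sum_{m\in\cZ} R_m^{-(d+1)} Z_m^{-2}$ (and its $Q_m$ variant) uniformly enough to get a quantitative tail; the subtlety is that although $\|(m,e(L))\|\asymp\|m\|$, the individual coordinates $R_m$ and $Z_m$ can each be much smaller than $\|m\|$, so one must integrate over $L$ to average out these near-degeneracies, and one must check that the resulting bound still decays in $\|m\|$ fast enough to be summable. This is a Siegel-type second-moment computation on $\mathrm{SL}(d+1,\R)/\mathrm{SL}(d+1,\Z)$, and getting the exponents right (using $d\geq 2$, so $d+1\geq 3$, which is exactly what makes $\int 1/|Z_m|^2$ and $\int 1/R_m^{d+1}$ locally integrable when paired correctly) is where the real work lies.
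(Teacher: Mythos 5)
Your plan shares the overall architecture of the paper's proof (reduce, conditionally on $L$ and $\theta$, to a second--moment bound over $m$; then control the resulting sum for a.e.\ lattice), but two points merit attention. First, a structural simplification you missed: for fixed $(L,\theta)$, the $m$-th summand (the full $p$-sum for a given $m\in\cZ$) is a function of the single coordinate $b_m$ only, so the family indexed by $m$ is genuinely \emph{independent}, not merely orthogonal; the paper therefore applies Kolmogorov's three--series theorem directly and obtains a.s.\ convergence in $b$ once $\sum_m \Gamma(\theta,Z_m)/R_m^{d+1}<\infty$, with no need for dyadic blocks, Doob's inequality, or a sparse-subsequence Borel--Cantelli.

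Second, and this is the real gap: your claim that ``the singularities $R_m\to 0$ and $Z_m\to 0$ are integrable in dimension $d+1\ge 3$'' and that a straight Siegel-type second-moment computation $\E_L\bigl[\sum_m R_m^{-(d+1)}\min(1,Z_m^{-2})\bigr]$ is finite is false. Since $\Prob(R_m<s)=\cO(s^d/|m|^d)$, the random variable $R_m$ has density $\asymp s^{d-1}$ near $0$, so $\E_L[R_m^{-(d+1)}]\gtrsim\int_0^1 s^{-(d+1)}\,s^{d-1}\,ds=\int_0^1 s^{-2}\,ds=\infty$, and the bounded factor $\min(1,Z_m^{-2})$ does not rescue this. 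The expectation of the full sum is therefore infinite, and the $L^2$-tail estimate you propose does not converge. The paper avoids this by a preliminary Borel--Cantelli step: using $\Prob(R_m<s)=\cO(s^d/|m|^d)$, for a.e.\ $L$ one has $R_m>|m|^{-(1+\delta_0)/d}$ for all large $m$; one then replaces $R_m$ by the \emph{truncated} quantity $\brR_m=\max\bigl(R_m,|m|^{-(1+\delta_0)/d}\bigr)$, for which $\E_L[\brR_m^{-(d+1)}]\lesssim |m|^{(1+\delta_0)/d-d}$ is finite with the requisite decay in $|m|$. Coupled with the splitting via $R_m^2+Z_m^2\gtrsim|m|^2$ (either $R_m$ or $|Z_m|$ is $\gtrsim|m|^{1-\delta_0}$) and the bound $\E\bigl[\brZ_m^{-2}\chi_{L\in\bbK}\bigr]\lesssim|m|^{-1}$, the truncated sum is summable in $m$ for small $\delta_0$. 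Without this truncation step your proposed Siegel-type moment does not close.
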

\begin{proof}
We will prove the convergence of \eqref{LimSumSym}, the other series can be treated similarly.
Let 
\begin{equation}
\xi_m=\sum_p \frac{\sin(\pi p Z_m) \cos(2\pi p(\theta, m)) \sin(2\pi(p b_m-\frac{d-1}{8}))}{
R_m^{\frac{d+1}{2}} Z_m p^{\frac{d+3}{2}}} K^{-\frac{1}{2}}(X_m/R_m) .
\end{equation}
Note that for fixed $L$ and $\theta,$ the random variables $\xi_m$ are independent, and
\begin{equation} \E(\xi_m)=0, \quad \text{Var}(\xi_m)=\frac{\Gamma(\theta, Z_m)}{K(X_m/R_m) R_m^{d+1}} \end{equation}
where
\begin{equation} \Gamma(\theta, Z)=\sum_p \frac{\cos^2 (2\pi p(\theta, m)) \sin^2 (\pi p Z_m)}{Z_m^2 p^{d+3}}. \end{equation}
By Kolmogorov's three series theorem, given $(L, \theta)$, $\cL$ converges for almost 
every $b$ provided that
\begin{equation}
\label{SumVar}
\sum_m \frac{\Gamma(\theta, Z_m)}{R_m^{d+1}}<\infty.
\end{equation}
Therefore it suffices to show that \eqref{SumVar} converges for 
almost every $(L, \theta).$ 

Observe that
\begin{equation}
\label{PrRSmall}
 \Prob(R_m<s)=\cO\left(\frac{s^d}{|m|^d}\right)
\end{equation} 
so by Borel-Cantelli Lemma for each $\delta_0>0$ for almost every $L$ we have for sufficiently large $m$ that
$R_m>|m|^{-\frac{1+\delta_0}{d}}.$ 
Hence it is sufficient to show that
\begin{equation}
\label{SumVarCutOff1}
\sum_m \frac{\Gamma(\theta, Z_m)}{\brR_m^{d+1}}<\infty
\end{equation}
for almost every $(L, \theta)$ where $\brR_m=\max(|R_m|, |m|^{-\frac{1+\delta_0}{d}}).$ 
Note that if $|Z_m|>1$ then $\Gamma(\theta, Z_m)=\cO\left(|Z_m|^{-2}\right)$
and if $|Z_m|\leq1$ then $\Gamma(\theta, Z_m)=\cO\left(1\right).$
Accordingly it suffices to show that
\begin{equation}
\sum_m \frac{1}{\brZ_m^2 \brR_m^{d+1}}<\infty \end{equation}
for almost every $L$ where $\brZ_m=\max(|Z_m|,1).$

Next since every two norms on $\reals^{d+1}$ are equivalent there exist $c(L)$ such that
$|R_m|^2+Z_m^2\geq c|m|^2.$ Denote 
$\brrZ_m=\max(|m|^{1-\delta_0}, |Z_m|), $
$\brrR_m=\max(|m|^{1-\delta_0}, |R_m|). $
Then either $\brR_m=\brrR_m$ or $\brZ_m=\brrZ_m.$ 
Therefore it suffices to show that for almost all $L$

\begin{equation}
\label{SumVarCutOff2}
\sum_m \frac{1}{\brZ_m^2 \brrR_m^{d+1}}<\infty
\end{equation}
and

\begin{equation}
\label{SumVarCutOff3}
\sum_m \frac{1}{\brrZ_m^2 \brR_m^{d+1}}<\infty.
\end{equation}
To prove \eqref{SumVarCutOff2} it suffices to show that 
\begin{equation}
\label{SumVarCutOff4}
\sum_m \frac{1}{\brZ_m^2 |m|^{(1-\delta_0) (d+1)}}<\infty.
\end{equation}
Fix a compact set $\bbK$ in the space of lattices. Then there is a constant $C=C(\bbK)$ such  that 
\begin{equation}
 \Prob(Z_m\in [s,s+1] \text{ and }L\in \bbK) \leq \frac{C(\bbK)}{|m|}.
\end{equation}
Therefore 
\begin{equation} 
 \E\left(\frac{1}{\brZ_m^2} \chi_{L\in \bbK} \right)\leq \frac{\brC(\bbK)}{|m|}. 
\end{equation}
Now summation over $m$ shows that the series \eqref{SumVarCutOff4} converges for almost all $L\in \bbK.$ 
Since $\bbK$ is arbitrary \eqref{SumVarCutOff2} follows.

Likewise
\begin{equation} 
\label{ExpSumVarCutOff3}
\E\left(\frac{1}{\brrZ_m^2 \brR_m^{d+1}}\right)\leq \frac{\Const}{|m|^{2(1-\delta_0)}}  \E\left(\frac{1}{\brR_m^{d+1}}\right) . 
\end{equation}
Since \eqref{PrRSmall} implies that
\begin{equation*} 
\Prob \left( R_m \in [2^l |m|^{-(1+\delta_0)/d}, 2^{l+1} |m|^{-(1+\delta_0)/d}] \right)\leq \Const 2^{ld}  |m|^{-(1+\delta_0)-d}
\end{equation*}
we have
\begin{equation} 
\label{EXPRd}
 \E\left(\frac{1}{\brR_m^{d+1}}\right) \leq {\Const}{|m|^{ (1+\delta_0)/d -d}}
\end{equation} 
and \eqref{SumVarCutOff3} follows from \eqref{ExpSumVarCutOff3} and \eqref{EXPRd}.
\end{proof}




 \end{document}